\documentclass[11pt,reqno]{amsart}
\usepackage{amssymb,amsmath,amsthm}
\usepackage{graphicx}
\usepackage{subfigure}
\usepackage{color}
\usepackage{ulem}
\usepackage{enumitem}
\usepackage{bookmark}
\newtheorem{theorem}{Theorem}[section]
\newtheorem{lemma}{Lemma}[section]
\newtheorem{proposition}{Proposition}[section]

\usepackage{hyperref}

\theoremstyle{definition}

\makeatletter
\newcommand{\myitem}[1]{%
\item[#1]\protected@edef\@currentlabel{#1}%
}
\makeatother

\hypersetup{
    colorlinks=true,
    linkcolor=blue,
    citecolor=blue
    }

\makeatletter
\@namedef{subjclassname@2020}{2020 Mathematics Subject Classification}
\makeatother

\numberwithin{equation}{section}

\subjclass[2020]{Primary  34D09, 34K06; Secondary  34K20,  37B55. }

\keywords{ Admissibility, exponential dichotomy, robustness, nonautonomous differential equation, delay equation.}

\begin{document}

\title[Admissible characterization and robustness]
{Admissible characterization in delay equations and robustness of exponential dichotomies \\ against small-delay perturbations}

\maketitle

\vskip 1pt
\centerline{\scshape Shuang Chen}
\medskip
{\footnotesize
 \centerline{School of Mathematics and Statistics, Central China Normal University}
 \centerline{ Wuhan, Hubei 430079, China}
}

\vskip 0.4cm
\centerline{\scshape Weinian Zhang}
\medskip
{\footnotesize
 \centerline{School of Mathematics, Sichuan University}
 \centerline{ Chengdu, Sichuan 610064, China}
}

\medskip

\begin{abstract}

Robustness of exponential dichotomies against small-delay perturbations presents a fundamental obstacle:
the lack of an effective admissible characterization in nonautonomous delay equations.
Considerable efforts have been devoted to obtaining such a characterization for differential equations in Banach spaces.
However, even unlike ordinary differential equations,
the variation of constants formula for delay equations requires  extending the phase space to a space of discontinuous functions,
and the dependence on the past states leads to another  challenge in establishing exponential dichotomy via admissibility, i.e.,
constructing a suitable admissible pair and then estimating the growth/decay rates along the unstable/stable subspaces via the admissibility property.
In this paper, we provide an admissible characterization
based on two pairs of Banach spaces that yields explicit dichotomy exponents.
Through this characterization and an operator perturbation method,
we prove that small-delay perturbations preserve exponential dichotomies.

\end{abstract}

\parskip 0.4cm


\section{Introduction}

Exponential dichotomy provides a description of hyperbolicity for homogeneous linear differential equations,
decomposing the phase space into a direct sum of two invariant subspaces, on one of which solutions decays exponentially
and on the other of which solutions grows exponentially (see \cite{CL-99,Coppel-78,Henry-81}).
Exponential dichotomy attracts interests of research because of its applications in homoclinic and heteroclinic bifurcations,
stability analysis of nonlinear waves and linearization theory, as seen in \cite{BDP21,DZZ20,HK24,LZ22,Palmer-84,Sand-02} and references therein.

An important problem on exponential dichotomies is the robustness of the decomposition, asking
what type of perturbations preserves the decomposition.
In 1958, Massera and Sch\"affer \cite{MS-58} established the robustness of exponential dichotomies for the ODE:
\begin{eqnarray}\label{eq:ODE}
\dot x=A(t)x, \ \ \ \ x\in \mathbb{R}^{n},
\end{eqnarray}
proving that if $A(t)$ is bounded and makes (\ref{eq:ODE}) admit an exponential dichotomy, then
the perturbed equation
\[
\dot x=(A(t)+B(t))x
\]
also admits an exponential dichotomy provided that $\sup_{t\in \mathbb{R}}\|B(t)\|$ is sufficiently small.
Later, Sch\"{a}ffer \cite{Sch-63} eliminated this boundedness condition via a functional analytic argument.
A direct proof was provided by Daleckii and Krein \cite{DK-74},  but they still made the boundedness assumption.
Finally, Coppel \cite{Coppel-78} showed that this condition can be removed quite easily, offering a simpler and more elementary treatment.
These advances well established that exponential dichotomies of ODE \eqref{eq:ODE} are robust against a slight change of the coefficient matrix.

Attention is also paid to differential equations with delay, which arises naturally in many physical, biological, and engineering applications due to inevitable factors such as propagation speeds, processing times, or communication lags (\cite{Chicone03,DLP86,D88,Driver1968,Kuehn-15}).
As in \cite{DVVW,Hale-Lunel-01},
consider the linear delay equation
\begin{eqnarray}\label{NA-NDE}
\dot x= L(t)x_{t}, \ \ \ \ \ x\in\mathbb{R}^{n},
\end{eqnarray}
where $L(t): C([-r,0],\mathbb{R}^n) \to \mathbb{R}^{n}$ for each $t\in \mathbb{R}$
is a bounded
linear operator on the Banach space
$C([-r,0],\mathbb{R}^n)$, consisting of all continuous functions from $[-r,0]$ to $\mathbb{R}^{n}$ endowed with the supremum norm,
and $x_{t}$ is defined by $x_t(\theta)= x(t+\theta)$ for $\theta \in [-r,0]$.
In the case $r=0$, equation (\ref{NA-NDE}) becomes ODE \eqref{eq:ODE}, which admits an exponential dichotomy.
It is interesting to investigate whether exponential dichotomy is preserved when a small delay $r>0$ is introduced,
which is known as the robustness problem of exponential dichotomies against small-delay perturbations.
The folklore principle of delay equations, explicitly stated in Kurzweil's note \cite{Kurzweil-71} and Smith's book \cite{smith95},
says that small delays are harmless and can be ignored.
However, a growing literature now demonstrates that even arbitrarily small delays can have dramatic effects (see, e.g., \cite{BP-05,GT-14,Hale-Lunel-01}).
Up to now,
the question whether a small delay matters does not have a one-size-fits-all answer, the answer of which
depends on the different form of equations.

Nevertheless, a fundamental result, proved via complex analysis in \cite{Hale-Lunel-01} and via functional analysis in  \cite[Chapter 7]{BP-05},
indicates that exponential dichotomies of autonomous ODEs
are insensitive to autonomous small-delay perturbations,
i.e.,  ODE \eqref{eq:ODE} and equation \eqref{NA-NDE} are autonomous, with $r$ sufficiently small.
However, in the nonautonomous case, i.e., $L(t)$ is time-dependent,
the robustness problem remains unresolved. There is a considerable obstacle in tackling it,
primarily due to the lack of an effective characterization of exponential dichotomies for nonautonomous delay equations.
To address this challenge,
we provide an admissible characterization of exponential dichotomies for equation (\ref{NA-NDE}).
The notion of admissibility, originating from the pioneering work of Perron \cite{Perron-30},
was initially developed to study the stability of ODEs.
Concretely, a pair of function classes, referred to as the input class and the output class, respectively,
is said to be (properly) admissible if the nonhomogeneous ODE $\dot x = A(t)x + h(t)$ admits a (unique) solution $x$
in the output class for each $h$ in the input class (see \cite{Coppel-78, DK-74, Massera-66}).
In recent decades, the characterization of exponential dichotomies for ODEs via admissibility has witnessed significant advancements,
including the relaxation of technical hypotheses (e.g., the bounded growth condition)
and the extension to a broader spectrum of dichotomous behaviors beyond the classical one,
such as nonuniform, random, and generalized dichotomies
(see \cite{BDP25,BL-08,BDC-18,EPR25,PP10,Sasu06b,ZZ16, ZLZ17} for instance).

In the past decades,
literature has contributed to extending admissibility results from ODEs to general differential equations in Banach spaces,
as seen in \cite{CL-99,Henry-81,LevZhikov82} for an introduction and \cite{BDC-17,ChowL95,Huy-06,MH01,PPP04,PP10,Sasu06} for recent advances.
However,
relatively few works specifically cover delay equations.
Admissibility results in the context of delay equations cannot be simply obtained
by a trivial modification of the ODE arguments (see also  \cite[p.1157]{BV-20}).
Although delay equations can be viewed as infinite-dimensional dynamical systems in spaces of continuous functions,
the corresponding variation of constants formula, which is essential to establish the admissible characterization,
requires an extension of the phase space from the space of continuous functions to a space of discontinuous functions (see Section \ref{sec:ED-AD}).
This is not needed for general differential equations in Banach spaces and even ODEs.
On the other hand,
just as in ODEs (see Chapter 4 in \cite{Coppel-78}),
we need to construct suitable functions in the input class to get an admissible pair,
and then apply the admissibility property to estimate the growth and decay rates along the unstable and stable subspaces, respectively.
However, the right-hand side of \eqref{NA-NDE} depends on not only the present state but also the past states.
This therefore demands  a tailored methodology for constructing the required function pairs.

Significant progress has been made in recent years.
In 2020, Barreira and Valls \cite{BV-20} provided a complete characterization of
exponential dichotomies for nonautonomous delay equations by using the admissibility of function spaces $C_b(\mathbb{R},\mathbb{R}^n)$ and
$M(\mathbb{R},\mathbb{R}^n)$,
where $C_b(\mathbb{R},\mathbb{R}^n)$ is the set of all bounded continuous functions from $\mathbb{R}$ to $\mathbb{R}^n$,
and $M(\mathbb{R},\mathbb{R}^n)$ is the set of all measurable functions from $\mathbb{R}$ to $\mathbb{R}^n$ such that
for each $h\in M(\mathbb{R},\mathbb{R}^n)$,
$
\sup_{t\in\mathbb{R}}\int_{t}^{t+1}|h(u)|\,du<+\infty,
$
identified if two functions are equal almost everywhere.
Through this admissibility property,
Barreira and Valls \cite{BV-20b} further proved that the perturbed delay equation:
$
\dot x= \big(L(t)+N(t)\big)x_{t}
$
admits an exponential dichotomy, provided that equation \eqref{NA-NDE} with $\sup_{t\in \mathbb{R}}\int_{t}^{t+1}\|L(\tau)\|\,d\tau<+\infty$
possesses an exponential dichotomy and $\sup_{t\in \mathbb{R}}\int_{t}^{t+1}\|N(\tau)\|\,d\tau<\delta$ for sufficiently small $\delta>0$.
This extends the robustness property developed for nonautonomous ODEs in \cite{Coppel-78,DK-74,MS-58}
 to nonautonomous  delay equations.
Following the idea of \cite{BV-20,Mallet99}, in 2025
Elorreaga and G\'{o}mez \cite{Elorreaga-Gomez-25} proved that
exponential dichotomies and the Fredholm alternative
for a class of nonautonomous delay equations with asymptotically autonomous limits
can be described by the admissibility  of a suitable pair of weighted Sobolev spaces.

The admissible characterizations developed in \cite{BV-20,Elorreaga-Gomez-25} establish the existence of exponential dichotomies,
but they fail to give the explicit expressions of the dichotomy exponents.
The problem of obtaining such explicit expressions is quite different from the mere existence proof
because these expressions are indispensable for not only examining the robustness of exponential dichotomies against small-delay perturbations
but demonstrating the dependence of the dichotomy exponents on small delays.
On the other hand, the presence of delays changes the phase space from a finite-dimensional Euclidean space to the space of continuous functions,
which makes the analysis framework provided in \cite{BV-20b,Elorreaga-Gomez-25},
where the perturbations preserve the phase space, no longer applicable.

In this paper we tackle the aforementioned issues and propose a novel characterization of exponential dichotomies for nonautonomous delay equations
with the admissibility of two pairs of Banach spaces (see Section \ref{sec:admdich}).
Notably, in our proof that admissibility implies exponential dichotomy,
we obtain the dichotomy exponents by constructing two specific solutions to the nonhomogeneous equation
and subsequently extracting the exponents via the admissibility property.
The other technical difficulty arises from the presence of delays,
which we overcome by developing an operator perturbation approach to establish the proper admissibility for the perturbed delay equations.
Subsequently, we obtain the robustness of exponential dichotomies against small-delay perturbations,
and show the dependence of the dichotomy exponents on small delays.
As an easy by-product, we can choose a dichotomy exponent of the perturbed delay
equation which is continuously differentiable with respect to small delays if the corresponding kernels are uniformly bounded in time, i.e., $\sup_{t\in\mathbb{R}}\|L(t)\|<+\infty$.

The paper is organized as follows.
We summarize the main results in Section \ref{sec:mainresults}.
The problem from exponential dichotomy to the proper admissibility is proved in Section \ref{sec:ED-AD},
and the converse problem is proved in Section \ref{sec:AD-ED}.
We rigorously prove the robustness against small-delay perturbations in the final section.


\section{Setup and main results}
\label{sec:mainresults}

We start by introducing some notations used in this paper.
Let $\mathbb{R}^n$ be equipped with the norm $|\cdot|$ induced by the inner product.
Without a particular emphasis, we always use $\|\cdot\|$ to denote the norm of an element in a Banach space
or the norm of a bounded linear operator acting from one Banach space into another.

\subsection{Setup and exponential dichotomy}\

According to the Riesz representation theorem,
we can express $L(t)$ as
\[
L(t)\phi=\int^{0}_{-r} d_\theta\eta(t,\theta)\phi(\theta),
\ \ \
t\in \mathbb{R},\ \phi\in C([-r,0],\mathbb{R}^n),
\]
where $\eta=(\eta_{ij})$ is a $n\times n$ matrix valued function on $\mathbb{R}\times \mathbb{R}$.
Then delay equation \eqref{NA-NDE} becomes
\begin{eqnarray}\label{DDE}
\dot x=\int^{0}_{-r} d_\theta\eta(t,\theta)x(t+\theta), \ \ \ \ \ x\in\mathbb{R}^{n}.
\end{eqnarray}
Throughout this paper, we assume that equation \eqref{DDE} satisfies the following:
\begin{enumerate}
\item[{\bf (A1)}]  $\eta$ is continuous from the left in $\theta$ on $(-r,0)$, measurable in $(t,\theta)$,
and
\begin{eqnarray*}
\eta(t,\theta) = 0   && \mbox{ for }\  \theta\geq 0,\\
\eta(t,\theta) =\eta(t,-r) && \mbox{ for }\ \theta\leq -r;
\end{eqnarray*}
\vskip 3pt

\item[{\bf (A2)}]
$\eta$ has a bounded variation in $\theta$ on $[-r, 0]$ for each $t\in \mathbb{R}$;
\vskip 3pt

\item[{\bf (A3)}]
$L(t)\phi$ is measurable in $t\in\mathbb{R}$ for each $\phi\in C([-r,0],\mathbb{R}^n)$
and $\|L(t)\|$ is a locally integral function satisfying
\begin{eqnarray}\label{hyp-int}
\sup_{t\in \mathbb{R}} \frac{1}{r_0}\int_{t}^{t+r_0}\|L(u)\| du\leq M<+\infty.
\end{eqnarray}
\end{enumerate}
In particular, for $r_0=1$ in \eqref{hyp-int}, this setting coincides with the case studied by Barreira and Valls in \cite{BV-20}.

A continuous $x:[s-r,+\infty)\to \mathbb{R}^n$ is called a solution of delay equation \eqref{DDE}
if $x$ is absolutely continuous on  $[s-r,+\infty)$ and satisfies  \eqref{NA-NDE}
for almost every $t\in [s,+\infty)$.
By the existence and uniqueness theorem (see \cite[Theorem 1.1, p.168]{JKHale-Verduyn}),
delay equation \eqref{DDE} with initial condition
$x_s=\phi\in C([-r,0],\mathbb{R}^n)$  has a unique solution, denoted by $x_t(\cdot,s,\phi)$.
Then we can define the evolution operators $T(t,s)$ of \eqref{DDE} as
\begin{eqnarray*}
T(t,s)\phi= x_t(\cdot,s,\phi), \ \ \ t\geq s, \ \phi \in  C([-r,0],\mathbb{R}^n).
\end{eqnarray*}
We refer to \cite{DVVW,GW-13,Hale-77,JKHale-Verduyn} for more information on delay equations.

The evolution family $\{T(t,s):t\geq s\}$ on $C([-r,0],\mathbb{R}^n)$ is said to admit an {\it exponential dichotomy}
on the whole line $\mathbb{R}$ (see  \cite{CL-99,Coppel-78,JKHale-Verduyn,Hale-Zhang-04})
if there exists a family of projections $P(t)$ on $C([-r,0],\mathbb{R}^n)$ for $t\in \mathbb{R}$ such that
\begin{enumerate}
\item[{\bf (E1)}] $T(t,s)P(s)=P(t)T(t,s)$ for $t\geq s$;
\vskip 3pt

\item[{\bf (E2)}] $T(t,s)|_{\mathcal{N}(P(s))}$ is an isomorphism from the kernel $\mathcal{N}(P(s))$ of $P(s)$ onto $\mathcal{N}(P(t))$ for $t\geq s$,
which defines the inverse of $T(t,s)|_{\mathcal{N}(P(s))}$, denoted by $T(s,t):\mathcal{N}(P(t)) \to \mathcal{N}(P(s))$;
\vskip 3pt

\item[{\bf (E3)}] There exist constants $K>0$ and $\alpha>0$ such that
\begin{eqnarray*}
\|T(t,s)P(s)\|\leq Ke^{-\alpha(t-s)},&& \ \ \ t\geq s,\\
\|T(t,s)(Id-P(s))\|\leq Ke^{-\alpha(s-t)}, && \ \ \ s\geq t,
\end{eqnarray*}
where $Id$ is the identity operator.
\end{enumerate}
We call $\alpha$ and $K$ an {\it exponent} and a {\it bound} of this dichotomy respectively,
and $P(t)$  the {\it dichotomy projection}.
For notational convenience,
we denote this dichotomy by $\mathcal{E}(\alpha,K)$.
The exponential dichotomy for ODE \eqref{eq:ODE} on the whole line $\mathbb{R}$ is defined in the same way,
and without confusion, we will use the same notations.


\subsection{Characterization via admissibility}\
\label{sec:admdich}

We now provide an admissible characterization of exponential dichotomies for nonautonomous delay equations.
Given $b\in \mathbb{R}$, let $\mathcal{H}_{b}(\mathbb{R})$
denote the set of locally integrable functions $h: \mathbb{R}\to \mathbb{R}^{n}$
such that
\[
\sup_{t\in\mathbb{R}} e^{b |t|}\int_{t}^{t+r_0}|h(\tau)|\,d\tau<+\infty,
\]
and $\mathcal{Y}_{b}(\mathbb{R})$
the set of continuous functions $y: \mathbb{R}\to \mathbb{R}^{n}$
such that
\[
\sup_{t\in\mathbb{R}} e^{b |t|}|y(t)|<+\infty.
\]
One can check that $\mathcal{H}_{b}(\mathbb{R})$ and $\mathcal{Y}_{b}(\mathbb{R})$ are Banach spaces equipped with the norms
\begin{eqnarray*}
\begin{aligned}
\|h\|_{\mathcal{H}_{b}(\mathbb{R})} := & \sup_{t\in\mathbb{R}} \frac{1}{r_0}e^{b |t|}\int_{t}^{t+r_0}|h(\tau)|\,d\tau,~~~ h\in \mathcal{H}_{b}(\mathbb{R}),
\\
\|y\|_{\mathcal{Y}_{b}(\mathbb{R})} := & \sup_{t\in\mathbb{R}} e^{b |t|}|y(t)|,~~~ y\in \mathcal{Y}_{b}(\mathbb{R}),
\end{aligned}
\end{eqnarray*}
respectively.

Consider the nonhomogeneous delay equation:
\begin{eqnarray}\label{DDEnonhomo}
\dot y= L(t)y_t+h(t),
\end{eqnarray}
where $L(t)$ are defined before \eqref{DDE} for $t\in \mathbb{R}$.
For any given $h \in \mathcal{H}_{b}(\mathbb{R})$,
by Theorem 1.1 of \cite[page 168]{JKHale-Verduyn},
equation \eqref{DDEnonhomo} associated with the initial condition $x_{s}=\phi\in C([-r,0],\mathbb{R}^n)$
has a unique solution.
As shown in \cite{BDC-18, BV-20, Coppel-78, DK-74, Massera-66},
the pair $(\mathcal{H}_{b}(\mathbb{R}),\mathcal{Y}_{b}(\mathbb{R}))$ for some $b\in \mathbb{R}$ is said to be {\it  (properly) admissible}
with respect to
equation \eqref{DDEnonhomo}
if equation \eqref{DDEnonhomo} has a (unique) solution $x\in \mathcal{Y}_{b}(\mathbb{R})$ for each $h \in \mathcal{H}_{b}(\mathbb{R})$.
The following result shows that we can characterize exponential dichotomies of delay equation \eqref{DDE} in terms of an admissibility property.

\begin{theorem}\label{thm:ed-admis}
Equation \eqref{DDE} admits an exponential dichotomy on the whole line $\mathbb{R}$ if and only if
there exists a pair of constants $b^-$ and $b^+$ with $b^-<0<b^+$
such that both $(\mathcal{H}_{b^+}(\mathbb{R}),\mathcal{Y}_{b^+}(\mathbb{R}))$ and $(\mathcal{H}_{b^-}(\mathbb{R}),\mathcal{Y}_{b^-}(\mathbb{R}))$
are properly admissible
with respect to equation \eqref{DDEnonhomo}.
\end{theorem}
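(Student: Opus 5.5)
Both directions run through the variation of constants formula in the extended phase space. Following Hale--Verduyn Lunel, we enlarge $C([-r,0],\mathbb{R}^n)$ to the space of bounded functions with the supremum norm and extend $T(t,s)$ to it. Let $X_0$ denote the function equal to $Id$ at $\theta=0$ and $0$ elsewhere. Then every solution of \eqref{DDEnonhomo} satisfies
\[
y_t=T(t,s)y_s+\int_s^t T(t,u)X_0h(u)\,du .
\]
We also need the dichotomy projections $P(t)$ extended to the enlarged space, with (E1)--(E3) still holding there and $\|T(t,s)X_0\|$ controlled. We take this extension as the first step, citing the standard theory of Hale--Verduyn Lunel and of Barreira--Valls \cite{BV-20}.

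\textbf{Dichotomy $\Rightarrow$ proper admissibility.} Fix $\alpha$ and $K$ from $\mathcal{E}(\alpha,K)$, and take any $b^\pm$ with $0<b^+<\alpha$ and $-\alpha<b^-<0$. For $h\in\mathcal{H}_{b}(\mathbb{R})$ define the candidate solution through its first component:
\[
y(t)=\Big[\int_{-\infty}^t T(t,u)P(u)X_0h(u)\,du-\int_t^{\infty}T(t,u)(Id-P(u))X_0h(u)\,du\Big](0).
\]
We split each integral into windows of length $r_0$ and use $\int_{u}^{u+r_0}|h|\le r_0\|h\|e^{-b|u|}$. This gives
\[
e^{b|t|}|y(t)|\lesssim \sum_k e^{-\alpha k r_0}e^{|b|kr_0}\|h\|,
\]
because $|b||t|-|b||u|\le |b||t-u|$, and the sum converges since $|b|<\alpha$. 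Differentiating confirms that $y$ solves \eqref{DDEnonhomo}. For uniqueness, a difference of two solutions in $\mathcal{Y}_b$ is a solution of \eqref{DDE} that is exponentially small as $t\to+\infty$ and, when $b>0$, also as $t\to-\infty$. We decompose it along $P$. For $b^+>0$, boundedness forces both components to vanish by (E3). For $b^-<0$ the difference may grow like $e^{|b^-||t|}$. Even so, the unstable component is killed at $+\infty$ because the growth rate is $\alpha>|b^-|$, and the stable component is killed at $-\infty$ in the same way.

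\textbf{Admissibility $\Rightarrow$ dichotomy.} This is the hard direction, and we follow Coppel's scheme. For $b^+$-proper admissibility we define the solution operator $G:h\mapsto y$ from $\mathcal{H}_{b^+}$ to $\mathcal{Y}_{b^+}$. It is closed, hence bounded by the closed graph theorem; call its norm $\|G\|$. We define the stable space $S(s)$ as the set of $\phi$ whose forward solution is $O(e^{b^-(t-s)})$-bounded; more precisely, the set of initial data of solutions decaying at $+\infty$. We define $U(s)$ as the set of initial data of solutions existing on $(-\infty,s]$ and decaying at $-\infty$.

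Proper $\mathcal{Y}_{b^+}$ admissibility gives $S(s)\cap U(s)=\{0\}$. To show $S(s)+U(s)=C$, we take $\phi$ and build an input $h$ supported in $[s,s+r]$, for example $h(t)=\chi(t)\,\partial_t[\psi(t)\,x(t)]$-type cutoffs of the solution $x$ through $\phi$. The corresponding admissible solution splits $\phi$ into a part continuing to decay forward and a part continuing backward. The past dependence is the new difficulty here: the cutoff must act on a whole history segment, so the support must have length at least $r$ and the input must involve $L(t)$ applied to cut-off histories. Assumption \eqref{hyp-int} then controls $\|h\|_{\mathcal{H}}$.

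The exponents come from two specific inputs. For $\phi\in S(s)$ with solution $x$, we set
\[
h(t)=\psi(t)\,e^{\beta(t-s)}\,x(t)
\]
with a suitable weight; equivalently, we compare $x$ with $z(t)=\int_s^t(\ldots)x$. The resulting $z$ is the admissible solution, so $\|z\|_{\mathcal{Y}}\le \|G\|\,\|h\|_{\mathcal{H}}$. This yields an integral inequality of the form
\[
\int_t^{\infty}|x|\le C\,|x(t)|,
\]
which implies decay at rate $\alpha\approx 1/(C\|G\|)$, after transferring from $|x(t)|$ to $\|x_t\|$ via a Gronwall bound over one delay interval using $M$. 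The unstable side is handled symmetrically with the $b^-$ pair, and the projection bounds follow from the same estimates together with the closed graph argument. We expect the main obstacle to be this exponent extraction, since the delay terms spoil the ODE identities and must be absorbed using $M$ and $r_0$.
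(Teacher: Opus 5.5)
Your forward direction is essentially the paper's Proposition~\ref{prop-ED-to-Admis}. Your splitting step (cut off the solution through $\phi$ and use an input containing $L(t)$ applied to the cut-off history) is the paper's construction \eqref{df:hstar} and Lemmas~\ref{lm:imp}--\ref{lm:directsum}. The gap is in the exponent extraction, which is the core of the converse. As written, it fails for delay equations, for three reasons.

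First, if $z(t)=c(t)x(t)$ with $x$ a homogeneous solution, then
\[
\dot z(t)-L(t)z_t=c'(t)x(t)+\int_{-r}^{0}d_\theta\eta(t,\theta)\big((c(t)-c(t+\theta))\,x(t+\theta)\big),
\]
so $z$ is not the admissible solution for $h=\psi e^{\beta(t-s)}x$. The commutator term must be included in $h$. It is controlled uniformly only if $c'$ is normalized by the segment norm, $c'(u)\propto\|x_u\|^{-1}$, because then $|x(t+\theta)|\int_{t+\theta}^{t}\|x_u\|^{-1}du\le r$.

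Second, your target inequality $\int_t^\infty|x|\le C|x(t)|$ is false in general. For $\dot x=-ax(t-r)$ with $ar\in(1/e,\pi/2)$ there is a dichotomy with $P=Id$, yet every nontrivial solution has zeros and never vanishes identically afterwards. So the right-hand side would have to be $\|x_t\|$, and passing from such an inequality to exponential decay becomes a delay differential inequality, which you have not handled.

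Third, an input supported on all of $[s,\infty)$ lies in $\mathcal H_{b^+}(\mathbb{R})$ only if you already know that $x$ decays, which is circular. It has to be truncated to compact support.

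The paper needs no integral inequality at all. Its input \eqref{df:hstar} is supported in $[\tau,\tau+\delta+r]$, is normalized by $\|x^\star_u\|^{-1}$, and satisfies $\int_s^{s+r_0}|h^\star|\le(Mr_0+1)\delta/\delta_\star$, with $\delta/\delta_\star\to\|\phi\|$ as $\delta\to0$. For $\phi\in\mathcal S(\tau)$ the direct sum forces $y^\star_\tau=0$, hence $y^\star=x^\star$ on $[\tau+\delta,\infty)$. The bound $|y^\star(t)|\le\|\mathcal A^\pm\|\,\|h^\star\|_{\mathcal H_{b^\pm}(\mathbb{R})}e^{-b^\pm|t|}$ then gives the rate directly from the weights.

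Since the weights $e^{b|t|}$ are anchored at $t=0$, using $b^+$ works only for $0\le\tau<t$. For $\tau<t\le0$ the $b^+$ weight gives only the growth bound $e^{b^+(t-\tau)}$, so one must use $b^-$ there and concatenate through $t=0$. The roles are reversed on the unstable side with \eqref{df:hstar2}. This is why both pairs are needed, why $\mathcal S^+=\mathcal S^-$ and $\mathcal U^+=\mathcal U^-$ must be proved, and how the explicit exponent $\min\{b^+,-b^-\}$ arises. Your division `stable via $b^+$, unstable via $b^-$' with a single constant $\|G\|$ does not produce bounds uniform in $\tau$.

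A minor point: if $S(s)$ and $U(s)$ are defined by mere decay, then $S(s)\cap U(s)=\{0\}$ follows from uniqueness in $\mathcal Y_{b^-}(\mathbb{R})$, not in $\mathcal Y_{b^+}(\mathbb{R})$.
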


This theorem is proved by using Proposition \ref{prop-ED-to-Admis} in Section \ref{sec:ED-AD}
and  Proposition \ref{prop-adm-to-NED} in Section \ref{sec:AD-ED},
one shows that exponential dichotomies imply the proper admissibility of $(\mathcal{H}_{b^{\pm}}(\mathbb{R}),\mathcal{Y}_{b^{\pm}}(\mathbb{R}))$,
and the other shows the converse result.
It gives the admissible characterization of exponential dichotomies for delay equation \eqref{DDE} on the whole line $\mathbb{R}$.
The same problem on the half-line can be treated by an analogous discussion, requiring only a slight modification.

We emphasize that the admissible characterization obtained in Theorem  \ref{thm:ed-admis} is applicable to ODEs as a special case,
for which arguments analogous to those of (\cite{Coppel-78,DZZ20,Huy-06,ZLZ17}) are available.
However, the extension to delay equations is by no means trivial.
Indeed, just as in the proofs of Lemma 4.4 in \cite{BV-20} and Theorem 1.2 in \cite{Elorreaga-Gomez-25},
a key step in establishing the admissible characterization of delay equations is
to construct a suitable function in the input class so as to obtain an admissible pair and then establish the dichotomy decomposition of the phase space.
The methods used in \cite{Coppel-78,DZZ20,Huy-06,ZLZ17} are inapplicable to this procedure.
To tackle this issue and the robustness problem,
we construct two novel functions in $\mathcal{H}_{b^\pm}(\mathbb{R})$ for $b^-<0<b^+$ (see \eqref{df:hstar} and \eqref{df:hstar2}).
Using these functions,
we obtain two admissible pairs which allow us not only to establish the admissibility characterization
but also to derive explicit expressions for the dichotomy exponents.


\subsection{Robustness of exponential dichotomies}\

Another goal of this paper is to study the robustness of exponential dichotomies against small-delay perturbations.
We now make an additional assumption about the kernel $\eta$:
\begin{enumerate}
\item[{\bf (A4)}] $\eta(t,\theta) =-A(t)$  for all $\theta\leq -r$ and $t\in \mathbb{R}$,
and the delay $r$ satisfies $0<r<r_{0}$.
\end{enumerate}
The constant $r_0$ in {\bf (A4)} will be used to indicate the {\it smallness condition} of delay $r$.
This assumption ensures that we can regard equation \eqref{DDE}  as a small-delay perturbation of ODE \eqref{eq:ODE}.
By {\bf (A3)} and {\bf (A4)}, we have
\begin{eqnarray}\label{hyp:A}
\sup_{t\in\mathbb{R}}\frac{1}{r_0}\int_{t}^{t+r_0}\|A(\tau)\|\,d\tau \leq M<+\infty.
\end{eqnarray}
A simple example meeting {\bf (A4)} is the delay equation
\[
\dot x = A(t)x(t-r),
\]
which is of the form \eqref{DDE} with $L(t)\phi=A(t)\phi(-r)$ for each $\phi\in C([-r,0],\mathbb{R}^n)$
and the kernel
\begin{eqnarray*}
\eta(t,\theta)=
\left\{
\begin{aligned}
& -A(t),\ \ \ && \mbox{ if }\ \theta\leq -r,\\
& 0,\ \ \   &&\mbox{ if }\ \theta >-r.
\end{aligned}
\right.
\end{eqnarray*}

In particular,
if $\|L(t)\|$ is uniformly bounded with respect to $t\in \mathbb{R}$, i.e.,
there exists $M\in (0,\infty)$ such that $\sup_{t\in\mathbb{R}}\|L(t)\|\leq M<+\infty$.
Then we see that condition \eqref{hyp-int} holds for all $r_0>0$ because
\begin{eqnarray*}
\frac{1}{r_0}\int_{t}^{t+r_0}\|L(u)\| du
  \leq \frac{1}{r_0}\int_{t}^{t+r_0}M\,du
  \leq M<+\infty,
  \ \ \ \forall\, t\in\mathbb{R}.
\end{eqnarray*}
Hence, equation \eqref{DDE}, subject to the aforementioned settings,
also includes  small-delay problems considered in
 \cite{Chen-Shen-2020,Driver1968,Driver1976}
and asymptotically autonomous delay equations in \cite{Elorreaga-Gomez-25,Lin-86}.

Finally, we are in the position to state two theorems on the robustness of exponential dichotomies against small-delay perturbations.
The detailed proofs are given in Section \ref{sec:pfs}.

\begin{theorem}\label{thm-ED}
Suppose that ODE \eqref{eq:ODE} admits an exponential dichotomy
$\mathcal{E}(\alpha,K)$ on the whole line $\mathbb{R}$ and delay equation \eqref{DDE} satisfies {\bf (A1)}\,-\,{\bf (A4)}.
Then there is $r_0$ satisfying $\ell(\alpha,r_0) r_0<\alpha$,
where $\ell(\alpha,r_0):=Me^{4\alpha r_0}(4KM+\alpha)$,
such that for any $\beta \in (0, \alpha-\ell(\alpha,r_0) r_0]$ equation \eqref{DDE} with delay $r\in (0,r_0]$
admits an exponential dichotomy on the whole line $\mathbb{R}$ with exponent $\beta$.
\end{theorem}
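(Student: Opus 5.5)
The plan is to route everything through the admissibility characterization of Theorem~\ref{thm:ed-admis}, applied twice. First apply it to the ODE \eqref{eq:ODE}, the special case $r=0$. From $\mathcal{E}(\alpha,K)$ I will write down the Green function $G(t,s)=\Phi(t)P\Phi^{-1}(s)$ for $t\ge s$ and $G(t,s)=-\Phi(t)(Id-P)\Phi^{-1}(s)$ for $t<s$, and the bounded solution operator $(\mathcal{G}h)(t)=\int_{\mathbb{R}}G(t,s)h(s)\,ds$. For $|b|<\alpha$ I will show directly that $\mathcal{G}$ maps $\mathcal{H}_b(\mathbb{R})$ into $\mathcal{Y}_b(\mathbb{R})$. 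The estimate splits the integral into windows $[t+kr_0,t+(k+1)r_0]$ and sums a geometric series. It should give
\[
\|\mathcal{G}\|_{\mathcal{H}_b\to\mathcal{Y}_b}\le \frac{C(K,\alpha,r_0,b)}{\alpha-|b|},
\]
with constants of the type $Ke^{(\alpha+|b|)r_0}$ times $r_0$ over $1-e^{-(\alpha-|b|)r_0}$. I will choose $b^\pm=\pm\beta$, so the exponent $\beta$ of the delayed equation is inherited from the choice of the weight.

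Second, I will write the delay equation as a perturbation of the ODE:
\[
\dot y=A(t)y(t)+\big(L(t)y_t-A(t)y(t)\big)+h(t).
\]
The key structural fact is that, by \textbf{(A4)} and the Riemann--Stieltjes form, $L(t)\phi-A(t)\phi(0)=\int_{-r}^0 d_\theta\eta(t,\theta)\,(\phi(\theta)-\phi(0))$. The proposed operator is $(\mathcal{B}y)(t)=L(t)y_t-A(t)y(t)$. For $y=\mathcal{G}h$ we have $y(t+\theta)-y(t)=\int_t^{t+\theta}\dot y$, and $\dot y=A y+L y-Ay+h$ is itself controlled. So I will not treat $\mathcal{B}$ as a map $\mathcal{Y}\to\mathcal{H}$; that map is not small. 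Instead I will set up the fixed-point problem
\[
y=\mathcal{G}(\mathcal{B}y+h)
\]
on the space $\mathcal{W}_b$ of $y\in\mathcal{Y}_b$ that are absolutely continuous with $\dot y\in\mathcal{H}_b$. I will then estimate $\|\mathcal{B}y\|_{\mathcal{H}_b}\le r\,C\,M\,e^{c|b|r_0}\|\dot y\|_{\mathcal{H}_b}$. This uses \eqref{hyp-int} and the fact that averages of $|\dot y|$ over intervals of length $r$ are bounded by the $\mathcal{H}_b$ norm on windows of length $r_0$; this is where $r<r_0$ enters. The output $\dot y$ of $\mathcal{G}$ is also bounded, via $\dot y=A y+(\text{input})$, using \eqref{hyp:A}. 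Together these give a contraction factor of the form $r\,Me^{4\alpha r_0}(4KM+\alpha)/(\alpha-\beta)$, up to the bookkeeping of the constants. This factor is $<1$ exactly when $\beta<\alpha-\ell(\alpha,r_0)r$. That is the condition $\beta\le\alpha-\ell(\alpha,r_0)r_0$ with $r\le r_0$; the boundary case will need the estimate to be sharp or strictness to be checked. This yields existence of a solution in $\mathcal{Y}_{\pm\beta}$ by Neumann series.

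Third, uniqueness. Suppose $y\in\mathcal{Y}_{\pm\beta}$ solves the homogeneous delay equation. Then $\dot y=L(t)y_t$ has finite $\mathcal{H}$-norm by \eqref{hyp-int}, using $r<r_0$ to compare the weights at $t$ and $t+\theta$, which costs a factor $e^{\beta r}$. Since $y$ is a bounded solution of the ODE with forcing $\mathcal{B}y$, ODE uniqueness (proper admissibility for the ODE) gives $y=\mathcal{G}\mathcal{B}y$. The contraction then forces $y=0$. With existence and uniqueness for both $b^+=\beta$ and $b^-=-\beta$, Theorem~\ref{thm:ed-admis}, specifically Proposition~\ref{prop-adm-to-NED} with its explicit exponents, yields an exponential dichotomy with exponent $\beta$. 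I will rely on that proposition producing exponent $\min(b^+,-b^-)$, or at least one that can be taken equal to $\beta$.

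The main obstacle is the second step: bounding the delay defect $L(t)y_t-A(t)y(t)$ in $\mathcal{H}_b$ by a small multiple, of order $r$, of the data. This requires controlling $\dot y$ through the equation itself, and keeping track of the weights $e^{b|t|}$ across shifts by $\theta\in[-r,0]$ and across windows of length $r_0$. The factors $e^{4\alpha r_0}$ and $4KM+\alpha$ should emerge from this step. If the $\mathcal{W}_b$ framework becomes messy, my first fallback is to work with the graph norm $\|y\|_{\mathcal{Y}_b}+\|\dot y\|_{\mathcal{H}_b}$ and do the contraction there.
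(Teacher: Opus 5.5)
Your plan is essentially the paper's proof. The paper writes $y=\mathcal{F}g$ (its $\mathcal{F}$ is your $\mathcal{G}$) and inverts $\tilde{\mathcal{T}}\circ\mathcal{F}=Id+\Lambda\circ\mathcal{F}$ on $\mathcal{H}_{\pm\beta}(\mathbb{R})$ by a Neumann series. There $\Lambda=-\mathcal{B}$ is rewritten by integration by parts, so that it only sees $\dot y=A\mathcal{F}g+g$. Your fixed-point map $y\mapsto\mathcal{G}(\mathcal{B}y+h)$ on $\mathcal{W}_b$ is the same operator conjugated by $\mathcal{G}$. Your uniqueness step matches the paper's use of $\mathcal{D}(\tilde{\mathcal{T}})=\mathcal{R}(\mathcal{F})$, and the conclusion via Proposition~\ref{prop-adm-to-NED} with $b^\pm=\pm\beta$ is identical.

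One claim in your second step is false and must be corrected: the delay defect is small of order $r_0$, not of order $r$. Under \textbf{(A3)} only integrals over windows of length $r_0$ are controlled. Averages of $|\dot y|$ over intervals of length $r$ are therefore not bounded by $\|\dot y\|_{\mathcal{H}_b(\mathbb{R})}$. Both $\|L(\cdot)\|$ and $\dot y$ may concentrate on a subinterval shorter than $r$, and then $\|\mathcal{B}y\|_{\mathcal{H}_b}/\|\dot y\|_{\mathcal{H}_b}$ is of size $Mr_0$ however small $r$ is. The correct estimate is $\int_{u-r}^{u}|\dot y|\le\int_{u-r_0}^{u}|\dot y|$, bounded by a single $r_0$-window. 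This is the only place $r\le r_0$ is used, and it gives exactly the $r$-uniform threshold $\beta\le\alpha-\ell(\alpha,r_0)r_0$ of the statement. A threshold linear in $r$ needs \eqref{hyp-int} with $r_0$ replaced by $r$, which is what Theorem~\ref{thm-ED2} obtains from $\sup_t\|L(t)\|\le M$.

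To land exactly on $\ell(\alpha,r_0)$, two adjustments are needed.
\begin{enumerate}
\item Contract in the equivalent norm $\|\dot y-Ay\|_{\mathcal{H}_b}$ on $\mathcal{W}_b$, i.e.\ work in $\mathcal{H}_b$ as the paper does. In the plain graph norm you pick up the extra term $\|\mathcal{G}\mathcal{B}y\|_{\mathcal{Y}_b}\le\|\mathcal{G}\|\,\|\mathcal{B}y\|_{\mathcal{H}_b}$.
\item Bound $\int_{u-r_0}^{u}|\dot y|$ directly through $\dot y=Ay+g$, rather than through $\|\dot y\|_{\mathcal{H}_b}$, which costs a further factor $e^{|b|r_0}$.
\end{enumerate}
You then get $\|\mathcal{B}\mathcal{G}\|\le r_0e^{2|b|r_0}M(M\|\mathcal{G}\|+1)$. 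This is strictly smaller than $\ell(\alpha,r_0)r_0/(\alpha-|b|)$, so the endpoint $\beta=\alpha-\ell(\alpha,r_0)r_0$ that you flagged is covered without further work.
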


Inequality $\ell(\alpha,r_0) r_0<\alpha$ indicates the smallness condition, i.e., requiring that $r$ is small enough such that $0<r\leq r_0$.
This condition is not necessarily optimal and the sharp condition will be investigated in a future work.

If  $\|L(t)\|$ is uniformly bounded with respect to $t$, we further have the following result.

\begin{theorem}\label{thm-ED2}
Suppose that  ODE \eqref{eq:ODE} admits an exponential dichotomy $\mathcal{E}(\alpha,K)$ on the whole line $\mathbb{R}$,
and delay equation \eqref{DDE} satisfies {\bf (A1)}\,-\,{\bf (A4)} with the boundedness $\sup_{t\in\mathbb{R}}\|L(t)\|\leq M$ for some $M>0$.
Then for any $r\in (0,\varepsilon_0)$ and $\beta \in (0, \alpha-\zeta(\alpha) r]$,
where
$\varepsilon_0 :=\min\{1,\, \alpha /\zeta(\alpha)\}>0$
and $\zeta(\alpha) :=Me^{4\alpha }(4KM+\alpha)$,
equation \eqref{DDE} with delay $r$ admits an exponential dichotomy on the whole line $\mathbb{R}$ with exponent $\beta$.
\end{theorem}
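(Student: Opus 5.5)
Theorem \ref{thm-ED2} should follow directly from Theorem \ref{thm-ED}. The uniform bound $\sup_{t\in\mathbb{R}}\|L(t)\|\leq M$ frees us to choose $r_0$, since \eqref{hyp-int} then holds with the same $M$ for every $r_0>0$. Fix $r\in(0,\varepsilon_0)$ and $\beta\in(0,\alpha-\zeta(\alpha)r]$. The plan is to apply Theorem \ref{thm-ED} with a window length $r_0$ slightly larger than $r$. More precisely, I would revisit the proof of Theorem \ref{thm-ED} to check that its conclusion holds for any admissible $r_0$, not just some $r_0$. The theorem is stated existentially, but its proof should only use \eqref{hyp-int}, {\bf (A4)} in the form $r<r_0$ (or $r\le r_0$), and the smallness $\ell(\alpha,r_0)r_0<\alpha$.

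The key steps are as follows.

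First, note that by the remark preceding the theorem, {\bf (A3)} and \eqref{hyp:A} hold with constant $M$ for every $r_0>0$. Also $\|A(t)\|\le \|L(t)\|\le M$, since $A(t)\phi$ is the jump of $\eta$ at $-r$ and is dominated by the total variation.

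Second, the exponent bound. I would actually use the sharper quantity $\ell(\alpha,r)r$ rather than $\zeta(\alpha)r$. Since $r<\varepsilon_0\le 1$, we have $e^{4\alpha r}\le e^{4\alpha}$, so
\[
\ell(\alpha,r)\,r = Me^{4\alpha r}(4KM+\alpha)\,r\le \zeta(\alpha)\,r<\alpha ,
\]
where the last inequality uses $r<\alpha/\zeta(\alpha)$. By continuity of $s\mapsto \ell(\alpha,s)s$, we can pick $r_0\in(r,1]$ close to $r$ so that $\ell(\alpha,r_0)r_0\le\zeta(\alpha)r$ still holds. This works whenever $\ell(\alpha,r)r<\zeta(\alpha)r$, which is strict because $r<1$. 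Consequently $\beta\le\alpha-\zeta(\alpha)r\le \alpha-\ell(\alpha,r_0)r_0$.

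Third, apply the mechanism of Theorem \ref{thm-ED} with this $r_0$. Assumption {\bf (A4)} holds since $0<r<r_0$, and the smallness condition $\ell(\alpha,r_0)r_0<\alpha$ holds. This yields an exponential dichotomy with exponent $\beta$.

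The main obstacle is the quantifier issue in the first step. Theorem \ref{thm-ED} asserts only the existence of some $r_0$, so I must verify from its proof that the window $r_0$ is a free parameter. It enters only through \eqref{hyp-int}, through $r<r_0$, and through the inequality $\ell(\alpha,r_0)r_0<\alpha$ used in the operator-perturbation and Neumann-series step. Under uniform boundedness all three are under our control, so the argument reduces to that bookkeeping. If the original proof instead needs $r_0\ge 1$ somewhere, I would work with $r_0=1$ directly. Then $\ell(\alpha,1)=\zeta(\alpha)$, and the perturbation estimate, which is really of size $\zeta(\alpha)r$ (the $r_0$ factor arising from the delay length $r$), gives the exponent $\alpha-\zeta(\alpha)r$ whenever $r<\alpha/\zeta(\alpha)$.
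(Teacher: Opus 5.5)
Your argument is correct and is essentially the paper's proof. In both, the uniform bound gives \eqref{hyp-int} for any window, Lemma \ref{lm:fdmentlm}(iii) makes $\tilde{\mathcal{T}}\circ\mathcal{F}$ a small perturbation of the identity on $\mathcal{H}_{\pm\beta}(\mathbb{R})$, and Proposition \ref{prop-adm-to-NED} then yields the dichotomy with exponent $\beta$. The only difference is bookkeeping: the paper takes the window $r_0=r$, which Lemma \ref{lm:fdmentlm} allows since it covers delays in $(0,r_0]$, and uses $\ell(\alpha,r)r<\ell(\alpha,\varepsilon_0)r\le\zeta(\alpha)r\le\alpha-\beta$, whereas your choice of $r_0$ slightly above $r$ also respects the strict inequality $r<r_0$ in {\bf (A4)}, so your $r_0=1$ fallback is never needed.
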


This theorem demonstrates that
we can choose  a dichotomy exponent of equation \eqref{DDE} in the affine form
\[
\beta(r)=\alpha-\zeta(\alpha)r,\qquad r\in(0,\varepsilon_0),
\]
which depends linearly and $C^1$ on the small delay $r$, provided that $\sup_{t\in\mathbb{R}}\|L(t)\|\leq M$.
This property, to our best knowledge,
has not been reported for small-delay perturbations of nonautonomous differential equations in existing references.
Finally, we remark that these two theorems provide a characterization of exponential dichotomies for delay equation \eqref{DDE}
in terms of ODE \eqref{eq:ODE}. This allows us to further characterize the dichotomy spectrum (see \cite{SS-78,Siegmund-02}) via  ODE \eqref{eq:ODE},
which is known to be important to establish linearization theorems for nonautonomous differential equations
(see, for instance, \cite{BV-20b,CDS-19,DZZ20, Palmer-73}).


\section{Exponential dichotomy implies proper admissibility}\
\label{sec:ED-AD}

In this section, we prove the necessity of Theorem \ref{thm:ed-admis}.
We start with reviewing the variation of constants formula for delay equations
and also refer to \cite{BV-20b,Hale-77,JKHale-Verduyn} for more information.

Let $L_{1}^{\rm loc}(\mathbb{R},\mathbb{R}^n)$ denote the space of functions from $\mathbb{R}$ into $\mathbb{R}^n$
which are Lebesgue integrable on every compact set of $\mathbb{R}$.
Note from Theorem 2.1 of \cite{Hale-77} that
for any $h\in L_{1}^{\rm loc}(\mathbb{R},\mathbb{R}^n)$
the solution representation of nonhomogeneous delay equation \eqref{DDEnonhomo} with initial condition \(y_t=\phi\in C([-r,0],\mathbb R^n)\),
which is obtained by the Riesz representation theorem,
involves a discontinuous matrix-valued function.
Consequently, unlike ODEs, the variation of constants formula for delay equation \eqref{DDE}
cannot be formulated on the original phase space directly but, instead,
it requires to extend the evolution operator $T(t,s)$ of \eqref{DDE} to a larger space of discontinuous functions.

Proceeding as in \cite{Hale-77}, we extend $C([-r,0],\mathbb{R}^n)$ to the set $C_{0}([-r,0],\mathbb{R}^n)$
of all functions $\varphi$ from $[-r,0]$ to $\mathbb{R}^n$ such that
each $\varphi \in C_{0}([-r,0],\mathbb{R}^n)$ is continuous on $[-r,0)$ and the limit $\lim_{\theta\to 0^-}\varphi(\theta)$ exists.
One can verify that $C_{0}([-r,0],\mathbb{R}^n)$ is a Banach space equipped with the supremum norm.
Equation \eqref{DDE} with initial condition
$x_{s}=\varphi\in C_{0}([-r,0],\mathbb{R}^n)$ has a unique solution, denoted by $x_t(\cdot,s,\varphi)$.
Accordingly, we define the evolution operator $T_{0}(t,s)$ as
\begin{eqnarray*}
T_{0}(t,s)\varphi= x_t(\cdot,s,\varphi), \ \ \ t\geq s, \ \varphi \in  C_{0}([-r,0],\mathbb{R}^n).
\end{eqnarray*}
Then $\{T_{0}(t,s):t\geq s\}$  forms a strongly continuous semigroup on $C_{0}([-r,0],\mathbb{R}^n)$.
In particular, $T_0(t,s)\phi=T(t,s)\phi$ for each $\phi \in C([-r,0],\mathbb{R}^n)$ and $t\geq s$ (see \cite{BV-20,Hale-77}).

For any $s\in \mathbb{R}$ and $\phi\in C([-r,0],\mathbb{R}^{n})$,
equation \eqref{DDEnonhomo} with  $y_{s}=\phi$ has a unique solution $y$ given by
\begin{eqnarray}\label{eq:CVF}
y_t=T(t,s)\phi+\int_{s}^{t}T_0(t,u)X_0 h(u)\,du, \ \ \ t\geq s,
\end{eqnarray}
where
$X_0: \mathbb{R}^n \to C_{0}([-r,0],\mathbb{R}^{n})$ is defined by
\begin{eqnarray*}
(X_0\xi)(\theta):=
\left\{
\begin{aligned}
& 0,\ \ \ && \mbox{ if }\ \theta\in [-r,0),\\
& \xi,\ \ \   &&\mbox{ if }\ \theta =0,
\end{aligned}
\right.
\qquad \xi\in \mathbb{R}^{n}.
\end{eqnarray*}
This is called the {\it variation of constants formula} for equation \eqref{DDEnonhomo} (see \cite[Section 6.2]{Hale-77}).
Note that $X_0$ has a jump discontinuity at $\theta=0$.
Therefore, $X_0\xi$ belongs to $C_{0}([-r,0],\mathbb{R}^{n})$ but not to $C([-r,0],\mathbb{R}^{n})$.
With the extension, the integral term has a rigorous meaning in the discontinuous space $C_0([-r,0],\mathbb{R}^n)$.

The extension is also indispensable for the projected variation-of-constants formulas used in the study of exponential dichotomies.
Actually,
we can split formula \eqref{eq:CVF} into two parts, provided that equation \eqref{DDE} admits an exponential dichotomy
with the dichotomy projection $P(t)$ for each $t\in\mathbb{R}$.
Precisely, let $P_0(t), Q_0(t):\mathbb{R}^n\to C([-r,0],\mathbb{R}^{n})$ denote two maps of the form
\[
P_0(t)=X_0-Q_0(t),
\ \ \
Q_0(t)=T(t,t+r)Q(t+r)T_0(t+r,t)X_0,
\]
where $Q(t)=Id-P(t)$.
Then the projections of the integral term in \eqref{eq:CVF} onto the stable and unstable subspaces satisfy (see (49) and (50) in \cite{BV-20b})
\begin{eqnarray}\label{eq:prj-stab}
\begin{aligned}
P(t)\int_{\tau}^{t}T_0(t,u)X_0 h(u)\,du =& \int_{\tau}^{t}T_0(t,u)P_0(u) h(u)\,du, \\
Q(t)\int_{\tau}^{t}T_0(t,u)X_0 h(u)\,du =& \int_{\tau}^{t}T(t,u)Q_0(u) h(u)\,du,
\end{aligned}
\end{eqnarray}
respectively. Accordingly, $x_t$ in \eqref{eq:CVF} satisfies
\begin{eqnarray}\label{eq:soluPrj}
\begin{aligned}
P(t)x_t&= T(t,s)P(s)\phi+\int_{s}^{t}T_0(t,u)P_0(u) h(u)\,du, \\
Q(t)x_t&= T(t,s)Q(s)\phi+\int_{s}^{t}T(t,u)Q_0(u)h(u)\,du
\end{aligned}
\end{eqnarray}
for all $t\geq s$ in $\mathbb{R}$. In order to estimate the above integrals, we require the following lemma.

\begin{lemma}{\rm (see \cite[Proposition 1]{BV-20b})}\label{lm:est-prj}
Suppose that equation \eqref{DDE} admits an exponential dichotomy $\mathcal{E}(\alpha,K)$ on $\mathbb{R}$.
Then there exists a constant $N\geq 1$ such that
\begin{eqnarray*}
&\|T_0(t,s)P_0(s)\|\leq Ne^{-\alpha(t-s)},& \ \ \ t \geq s,\\
&\|T(t,s)Q_0(s)\|  \leq   Ne^{-\alpha(s-t)}, & \ \ \ s \geq t.
\end{eqnarray*}
\end{lemma}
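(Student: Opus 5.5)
The plan is to reduce both estimates to the dichotomy bounds {\bf (E3)} on $C([-r,0],\mathbb{R}^n)$, plus a uniform bound for the extended evolution operator $T_0$ over a time window of length $r$. The point is that after time $r$ the solution starting from a discontinuous initial function $X_0\xi$ has a continuous segment. So $T_0(s+r,s)X_0$ maps $\mathbb{R}^n$ into $C([-r,0],\mathbb{R}^n)$, and from there on only $T(t,s)$ and $P(t)$ are involved.

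\emph{Step 1 (short-time bound).} I claim there is $C\geq 1$, independent of $s$, with $\|T_0(t,s)\|\le C$ for $s\le t\le s+r$. Let $x$ solve \eqref{DDE} with $x_s=\varphi\in C_0([-r,0],\mathbb{R}^n)$. For $t\geq s$,
\[
|x(t)|\le \|\varphi\|+\int_s^t\|L(u)\|\,\|x_u\|\,du .
\]
The function $\sup_{[s-r,t]}|x|$ obeys the same inequality, so Gronwall's lemma gives
\[
\|x_t\|\le \|\varphi\|\exp\Big(\int_s^{s+r}\|L(u)\|\,du\Big).
\]
Since $r<r_0$ (or, without {\bf (A4)}, after covering $[s,s+r]$ by $\lceil r/r_0\rceil$ windows), \eqref{hyp-int} yields $\int_s^{s+r}\|L\|\le M r_0\lceil r/r_0\rceil$. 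I therefore take $C=e^{Mr_0\lceil r/r_0\rceil}$. Moreover, for $t\ge s+r$ the segment $x_t$ is continuous, because $x$ is absolutely continuous on $[s,\infty)$. Hence $T_0(s+r,s)X_0\xi\in C([-r,0],\mathbb{R}^n)$ and $T_0(t,s)X_0=T(t,s+r)T_0(s+r,s)X_0$ for $t\ge s+r$. The same reasoning shows $Q_0(s)$ takes values in $C([-r,0],\mathbb{R}^n)$, so $T_0(t,s)Q_0(s)=T(t,s)Q_0(s)$.

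\emph{Step 2 (unstable part).} Let $t\le s$. By {\bf (E2)}, the inverses on the kernels compose: $T(t,s)T(s,s+r)Q(s+r)=T(t,s+r)Q(s+r)$. Hence
\[
T(t,s)Q_0(s)=T(t,s+r)Q(s+r)T_0(s+r,s)X_0 ,
\]
and {\bf (E3)} together with Step 1 gives
\[
\|T(t,s)Q_0(s)\|\le KCe^{-\alpha(s+r-t)}\le KCe^{-\alpha(s-t)} .
\]

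\emph{Step 3 (stable part).} Let $t\ge s$; there are two cases. If $t\ge s+r$, the invariance {\bf (E1)} shows that $T(t,s)T(s,s+r)Q(s+r)$ equals $Q(t)T(t,s+r)$ on the relevant range, i.e. it is $T(t,s+r)Q(s+r)$ there. Subtracting this from the expression for $T_0(t,s)X_0$ in Step 1 gives
\[
T_0(t,s)P_0(s)=T(t,s+r)P(s+r)T_0(s+r,s)X_0 .
\]
This has norm at most $KCe^{\alpha r}e^{-\alpha(t-s)}$. If $s\le t<s+r$, I bound the two terms separately:
\[
\|T_0(t,s)X_0\|\le C,\qquad \|T(t,s)Q_0(s)\|=\|T(t,s+r)Q(s+r)T_0(s+r,s)X_0\|\le KC,
\]
the latter again by the composition rule on the unstable fibres. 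Since $t-s<r$, the sum $C(1+K)$ is at most $C(1+K)e^{\alpha r}e^{-\alpha(t-s)}$. Taking $N=C(1+K)e^{\alpha r}\ge 1$ covers both cases and Step 2.

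I expect the main obstacle to be bookkeeping rather than estimation. One must check carefully that the unstable fibres compose correctly, i.e. $T(t,s)T(s,s+r)=T(t,s+r)$ on $\mathcal{N}(P(s+r))$ for $t<s+r$, including $t\in[s,s+r)$. One must also verify that $T_0$ and $T$ agree on the continuous part, so that the dichotomy estimates, which live on $C([-r,0],\mathbb{R}^n)$, legitimately apply to $T_0(s+r,s)X_0\xi$. The Gronwall bound is routine once \eqref{hyp-int} is applied on a window of length $r$.
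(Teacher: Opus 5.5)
Your proof is correct. The paper gives no argument of its own for this lemma and simply quotes \cite[Proposition 1]{BV-20b}. Your route is the natural self-contained proof of that cited result, and it recovers the same exponent $\alpha$ with $N=C(1+K)e^{\alpha r}$. It consists of:
\begin{itemize}
\item[(1)] a Gronwall bound $\|T_0(t,s)\|\le C$ for $0\le t-s\le r$;
\item[(2)] the factorization $T(t,s)Q_0(s)=T(t,s+r)Q(s+r)T_0(s+r,s)X_0$, valid for every $t$ with the appropriate meaning of $T(t,s+r)$;
\item[(3)] the factorization $T_0(t,s)P_0(s)=T(t,s+r)P(s+r)T_0(s+r,s)X_0$ for $t\ge s+r$.
\end{itemize}

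One small correction in Step 3 concerns the identity $T(t,s)T(s,s+r)Q(s+r)=T(t,s+r)Q(s+r)$ for $t\ge s+r$. It does not come from \textbf{(E1)}. It follows from the cocycle relation $T(t,s)=T(t,s+r)T(s+r,s)$ together with $T(s+r,s)T(s,s+r)=Id$ on $\mathcal{N}(P(s+r))$.
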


The following result shows that exponential dichotomy implies proper admissibility.

\begin{proposition}\label{prop-ED-to-Admis}
Suppose that equation \eqref{DDE} admits an exponential dichotomy $\mathcal{E}(\alpha,K)$ on $\mathbb{R}$.
Then for any $b\in (-\alpha,\alpha)$,
the pair  $(\mathcal{H}_{b}(\mathbb{R}),\mathcal{Y}_{b}(\mathbb{R}))$ is properly admissible
with respect to equation \eqref{DDEnonhomo}.
\end{proposition}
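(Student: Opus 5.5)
The plan is to write down the natural bounded solution through the projected variation of constants formula and then check existence, membership in $\mathcal{Y}_b(\mathbb{R})$, and uniqueness. Given $h\in\mathcal{H}_b(\mathbb{R})$, I would define the $C([-r,0],\mathbb{R}^n)$-valued function
\begin{eqnarray*}
\Gamma(t):=\int_{-\infty}^{t}T_0(t,u)P_0(u)h(u)\,du-\int_{t}^{+\infty}T(t,u)Q_0(u)h(u)\,du,
\end{eqnarray*}
and set $y(t):=\Gamma(t)(0)$.

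\textbf{Convergence.} By Lemma \ref{lm:est-prj}, the integrands are bounded by $Ne^{-\alpha|t-u|}|h(u)|$. I would split $(-\infty,t]$ into intervals $[t-(k+1)r_0,t-kr_0]$ and use $\int_{\tau}^{\tau+r_0}|h|\le r_0\|h\|_{\mathcal{H}_b}e^{-b|\tau|}$. Since $|b|<\alpha$, this gives
\begin{eqnarray*}
\|\Gamma(t)\|\le N r_0\|h\|_{\mathcal{H}_b}\sum_{k\ge0}e^{\alpha r_0}e^{-\alpha k r_0}\big(e^{-b|t-kr_0|}+e^{-b|t+kr_0|}\big).
\end{eqnarray*}
Next I use $-b|t\mp kr_0|\le -b|t|+|b|kr_0$, which holds for either sign of $b$. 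The series then becomes geometric with ratio $e^{-(\alpha-|b|)r_0}<1$. Hence $e^{b|t|}\|\Gamma(t)\|\le C\|h\|_{\mathcal{H}_b}$, and in particular $y\in\mathcal{Y}_b(\mathbb{R})$ once continuity is checked.

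\textbf{$\Gamma(t)=y_t$ solves the equation.} For $t\ge s$, I would use the cocycle property $T_0(t,s)T_0(s,u)=T_0(t,u)$, together with $T(t,s)P(s)=P(t)T(t,s)$ and the analogous relations for $Q$ on unstable fibres. Splitting the integrals at $s$ should give
\begin{eqnarray*}
\Gamma(t)=T(t,s)\Gamma(s)+\int_s^tT_0(t,u)\big(P_0(u)+Q_0(u)\big)h(u)\,du=T(t,s)\Gamma(s)+\int_s^tT_0(t,u)X_0h(u)\,du.
\end{eqnarray*}
The first equality uses \eqref{eq:prj-stab}, or rather the identity $T(t,u)Q_0(u)=T_0(t,u)X_0-T_0(t,u)P_0(u)$ that underlies it. The second equality is \eqref{eq:CVF}, so $\Gamma(t)$ coincides with the segment $y_t$ of the solution starting from $\Gamma(s)$. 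Because $s$ is arbitrary, $y$ is a global solution of \eqref{DDEnonhomo}, and $y_t=\Gamma(t)\in C([-r,0],\mathbb{R}^n)$ automatically. I expect this to be the main obstacle: rigorously justifying the interchange of $T(t,s)$ with the improper integrals, and checking that $T(t,s)$ maps the tail $\int_s^\infty T(s,u)Q_0(u)h\,du$ correctly on unstable subspaces. I would first establish the formula on truncated intervals $[\tau,t]$ using \eqref{eq:soluPrj}, and then let $\tau\to-\infty$ using the uniform bounds above.

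\textbf{Uniqueness.} The difference $z$ of two solutions in $\mathcal{Y}_b(\mathbb{R})$ solves \eqref{DDE}, and $\|z_t\|\le Ce^{-b|t|}e^{|b|r}$. For $P(t)z_t=T(t,s)P(s)z_s$ I would let $s\to-\infty$: the norm is bounded by $Ke^{-\alpha(t-s)}Ce^{|b||s|}\to0$ because $|b|<\alpha$. Symmetrically, $Q(t)z_t=T(t,s)Q(s)z_s$ with $s\to+\infty$ uses the unstable estimate in (E3) and tends to $0$. Hence $z_t=0$ for all $t$, so $z\equiv0$ and the pair $(\mathcal{H}_b(\mathbb{R}),\mathcal{Y}_b(\mathbb{R}))$ is properly admissible.
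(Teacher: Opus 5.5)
Your proposal is correct and follows essentially the same route as the paper: the same candidate $\int_{-\infty}^{t}T_0(t,u)P_0(u)h(u)\,du-\int_{t}^{+\infty}T(t,u)Q_0(u)h(u)\,du$, the same summation over blocks of length $r_0$ using Lemma \ref{lm:est-prj}, and the same projection argument for uniqueness. Two of your steps improve on the paper's write-up: the single inequality $-b|t\mp kr_0|\le -b|t|+|b|kr_0$ replaces its case split into $t\le 0$ and $t>0$, and your truncation argument actually checks that $y$ solves \eqref{DDEnonhomo}, which the paper only asserts.
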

\begin{proof}
Suppose that equation \eqref{DDE} admits an exponential dichotomy $\mathcal{E}(\alpha,K)$ on $\mathbb{R}$
with the dichotomy projection $P(t)$ for each $t\in\mathbb{R}$.
Fix any $b\in (-\alpha,\alpha)$ and take each $h\in \mathcal{H}_{b}(\mathbb{R})$.
Set
\[
y_{t}:=(\mathcal{I}_{1}h)_{t}+(\mathcal{I}_{2}h)_{t}, \ \ \ \ \ t\in \mathbb{R},
\]
where
\begin{eqnarray*}
\begin{aligned}
(\mathcal{I}_{1}h)_{t}&=\int^{t}_{-\infty}T_{0}(t,u)P_{0}(u)h(u)\,du,\\
(\mathcal{I}_{2}h)_{t}&= -\int^{+\infty}_{t}T(t,u)Q_{0}(u)h(u)\,du.
\end{aligned}
\end{eqnarray*}
Since $P(t)$ is continuous in $t$ (see, for example,  \cite[Lemma 3.1]{DZZ-22}),
we have that  $y$, $\mathcal{I}_{1}h$ and $\mathcal{I}_{2}h$ are continuous functions on $\mathbb{R}$ and linear in $h$.

By Lemma \ref{lm:est-prj}, for any $t\leq 0$ we have
\begin{eqnarray}\label{est:stabinteg-1}
\begin{aligned}
\|(\mathcal{I}_{1}h)_{t}\|
&\leq  \int_{-\infty}^{t} Ne^{-\alpha(t-u)}|h(u)|\,du \\
&\leq N \sum_{m=0}^{\infty}\int^{t-m r_0}_{t-(m+1)r_0} e^{-\alpha m r_0}|h(u)|\,du\\
& = N r_0  e^{-b r_0} e^{b t}\sum_{m=0}^{\infty}e^{-(\alpha+b) mr_0} \left(\frac{1}{r_0}e^{b|t-(m+1)r_0|}\int^{t-m r_0}_{t-(m+1)r_0} |h(u)|\,du \right)\\
&\leq N r_0  e^{-b r_0} e^{b t} \sum_{m=0}^{\infty}e^{-(\alpha+b)m r_0}\|h\|_{\mathcal{H}_{b}(\mathbb{R})}\\
&\leq \frac{N r_0  e^{-b r_0}}{1-e^{-(\alpha+b) r_0}}\|h\|_{\mathcal{H}_{b}(\mathbb{R})}e^{b t},
\end{aligned}
\end{eqnarray}
where we use $\alpha+b>0$ and $h\in \mathcal{H}_{b}(\mathbb{R})$.
Let  $[\,\cdot\,]$ denote the integer part of a real number.
For any $t>0$, using \eqref{est:stabinteg-1} and $t-r_0<[\frac{t}{r_0}] r_0\leq t$, we have
\[
\begin{aligned}
\|(\mathcal{I}_{1}h)_{t}\|
&\leq  \|\int^{0}_{-\infty}T_{0}(t,u)P_{0}(u)h(u)\,du\|+\|\int^{t}_{0}T_{0}(t,u)P_{0}(u)h(u)\,du\| \\
&\leq  \int_{-\infty}^{0} N e^{-\alpha(t-u)}|h(u)|\,du +\int_{0}^{t} N e^{-\alpha(t-u)}|h(u)|\,d u \\
&\leq  \frac{N r_0  e^{-b r_0}}{1-e^{-(\alpha+b) r_0}}\|h\|_{\mathcal{H}_{b}(\mathbb{R})} e^{-\alpha t}
  + N\sum_{m=0}^{[\frac{t}{r_0}]} \int_{([\frac{t}{r_0}]-m)r_0}^{([\frac{t}{r_0}]+1-m)r_0} e^{-\alpha ([\frac{t}{r_0}]r_0-u)}|h(u)|\,du \\
&\leq  \frac{N r_0  e^{-b r_0}}{1-e^{-(\alpha+b) r_0}}\|h\|_{\mathcal{H}_{b}(\mathbb{R})} e^{-\alpha t}
  +N\sum_{m=0}^{[\frac{t}{r_0}]} e^{-\alpha(m-1)r_0} \int_{([\frac{t}{r_0}]-m)r_0}^{([\frac{t}{r_0}]+1-m)r_0}|h(u)|\,d u.
\end{aligned}
\]
For the second term in the last inequality, noting that $h\in\mathcal{H}_{b}(\mathbb{R})$ and $b-\alpha<0$, we get
\[
\begin{aligned}
& N\sum_{m=0}^{[\frac{t}{r_0}]} e^{-\alpha(m-1)r_0} \int_{([\frac{t}{r_0}]-m)r_0}^{([\frac{t}{r_0}]+1-m)r_0}|h(u)|\,du \\
& \leq   N r_0 e^{(\alpha+|b|) r_0} \sum_{m=0}^{[\frac{t}{r_0}]} e^{-(\alpha-b)mr_0-b t} \left( \frac{1}{r_0}e^{b([\frac{t}{r_0}]-m)r_0} \int_{([\frac{t}{r_0}]-m)r_0}^{([\frac{t}{r_0}]+1-m)r_0}|h(u)|\,d u \right)\\
&\leq  N r_0 e^{(\alpha+|b|) r_0} \sum_{m=0}^{[\frac{t}{r_0}]} e^{-(\alpha-b)mr_0-b t} \|h\|_{\mathcal{H}_{b}(\mathbb{R})} \\
&\leq  \frac{N r_0 e^{(\alpha+|b|) r_0} }{1-e^{-(\alpha-b)r_0}}\|h\|_{\mathcal{H}_{b}(\mathbb{R})}e^{-bt},
\ \ \ \ t>0.
\end{aligned}
\]
Accordingly, we have
\[
\|(\mathcal{I}_{1}h)_{t}\|
\leq \frac{2N r_0 e^{(\alpha+|b|) r_0} }{1-e^{-(\alpha-|b|)r_0}}\|h\|_{\mathcal{H}_{b}(\mathbb{R})}e^{-bt}
\ \ \ \mbox{ for }  t>0.
\]
This together with \eqref{est:stabinteg-1} yields that for each $h\in\mathcal{H}_{b}(\mathbb{R})$,
\begin{eqnarray}\label{est:stabinteg-2}
\begin{aligned}
\|\mathcal{I}_{1}h\|_{\mathcal{Y}_{b}(\mathbb{R})}
\leq \frac{2 N  r_0 e^{(\alpha+|b|) r_0} }{1-e^{-(\alpha-|b|)r_0}}\|h\|_{\mathcal{H}_{b}(\mathbb{R})},
\end{aligned}
\end{eqnarray}
where we use the fact that
\[
\|(\mathcal{I}_{1}h)_{t}\|=\sup_{\theta\in [-r,0]}|(\mathcal{I}_{1}h)(t+\theta)| \geq |(\mathcal{I}_{1}h)(t)|.
\]

For  $(\mathcal{I}_{2}h)_{t}$, we have that for $t\geq 0$,
\begin{eqnarray}\label{est:unstabinteg-1}
\begin{aligned}
\|(\mathcal{I}_{2}h)_{t}\|
& \leq  \int^{\infty}_{t} N e^{-\alpha(u-t)}|h(u)|\,du \\
& \leq N \sum_{m=0}^{\infty}\int_{t+m r_0}^{t+(m+1)r_0} e^{-\alpha m r_0}|h(u)|\,du \\
& = N r_0 e^{-b t}\sum_{m=0}^{\infty} e^{-(\alpha+b) m r_0} \left( \frac{1}{r_0}e^{b(t+m r_0)}\int^{t+(m+1)r_0}_{t+m r_0}|h(u)|\,du \right)\\
& \leq \frac{N r_0}{1-e^{-(\alpha+b)r_0}}\|h\|_{\mathcal{H}_{b}(\mathbb{R})}e^{-b t},
\end{aligned}
\end{eqnarray}
where we use $\alpha+b>0$ and $h\in \mathcal{H}_{b}(\mathbb{R})$.
For $t<0$, using \eqref{est:unstabinteg-1}, we have
\[
\begin{aligned}
\|(\mathcal{I}_{2}h)_{t}\|
&\leq  \|\int^{+\infty}_{0}T(t,u)Q_{0}(u)h(u)\,du\|+\|\int^{0}_{t}T(t,u)Q_{0}(u)h(u)\,du\|\\
&\leq  \int^{\infty}_{0} Ne^{-\alpha(u-t)}|h(u)|\,d u+\int^{0}_{t} Ne^{-\alpha(u-t)}|h(u)|\,d u\\
&\leq \frac{N r_0}{1-e^{-(\alpha+b)r_0}}\|h\|_{\mathcal{H}_{b}(\mathbb{R})}e^{\alpha t}
  +N\sum_{m=0}^{[-\frac{t}{r_0}]}\int_{(m-1-[-\frac{t}{r_0}])r_0}^{(m-[-\frac{t}{r_0}])r_0}e^{-\alpha(u+[-\frac{t}{r_0}]r_0)}|h(u)|\,d u,
\end{aligned}
\]
where we use the fact that $t\leq -[-\frac{t}{r_0}]r_0<t+r_0$ in the last line.
As for the second term in the last inequality, we have
\[
\begin{aligned}
& N\sum_{m=0}^{[-\frac{t}{r_0}]}\int_{(m-1-[-\frac{t}{r_0}])r_0}^{(m-[-\frac{t}{r_0}])r_0}e^{-\alpha(u+[-\frac{t}{r_0}]r_0)}|h(u)|\,d u\\
&\ \ \ \leq N \sum_{m=0}^{[-\frac{t}{r_0}]}e^{-\alpha(m-1)r_0}\int_{(m-1-[-\frac{t}{r_0}])r_0}^{(m-[-\frac{t}{r_0}])r_0}|h(u)|\,du \\
&\ \ \ = N r_0 e^{(\alpha-b) r_0-b [-\frac{t}{r_0}]r_0}\sum_{m=0}^{[-\frac{t}{r_0}]}e^{-(\alpha-b) m r_0}\left( \frac{1}{r_0}e^{b([-\frac{t}{r_0}]-m+1)r_0}\int_{(m-1-[-\frac{t}{r_0}])r_0}^{(m-[-\frac{t}{r_0}])r_0}|h(u)|\,du \right) \\
&\ \ \ \leq N r_0 e^{(\alpha+|b|) r_0+b t}\sum_{m=0}^{[-\frac{t}{r_0}]}e^{-(\alpha-b) m r_0}\left( \frac{1}{r_0}e^{b([-\frac{t}{r_0}]-m+1)r_0}\int_{(m-1-[-\frac{t}{r_0}])r_0}^{(m-[-\frac{t}{r_0}])r_0}|h(u)|\,du \right) \\
&\ \ \ \leq \frac{N r_0 e^{(\alpha+|b|) r_0}}{1-e^{-(\alpha-b)r_0}}\|h\|_{\mathcal{H}_{b}(\mathbb{R})}e^{b t}, \ \ \ \ t< 0.
\end{aligned}
\]
Accordingly, we have
\[
\begin{aligned}
\|(\mathcal{I}_{2}h)_{t}\|
\leq \frac{2 N r_0 e^{(\alpha+|b|) r_0}}{1-e^{-(\alpha-|b|)r_0}}\|h\|_{\mathcal{H}_{b}(\mathbb{R})}e^{b t}, \ \ \ \ t< 0.
\end{aligned}
\]
This together with \eqref{est:unstabinteg-1} yields that for each $h\in\mathcal{H}_{b}(\mathbb{R})$,
\begin{eqnarray}\label{est:unstabinteg-2}
\begin{aligned}
\|\mathcal{I}_{2}h\|_{\mathcal{Y}_{b}(\mathbb{R})}
\leq  \frac{2 N r_0 e^{(\alpha+|b|) r_0}}{1-e^{-(\alpha-|b|)r_0}}\|h\|_{\mathcal{H}_{b}(\mathbb{R})}.
\end{aligned}
\end{eqnarray}
By \eqref{est:stabinteg-2} and \eqref{est:unstabinteg-2},
we see that
$y:\mathbb{R}\to \mathbb{R}^{n}$ is well-defined on $\mathbb{R}$ and $y\in \mathcal{Y}_{b}(\mathbb{R})$.
Furthermore, we can verify that $y$ is continuous and satisfies equation \eqref{DDEnonhomo}.

To prove the uniqueness of the solution $y$,
it suffices to prove that if  $x_{t}=T(t,s)x_{s}$ for $t\geq s$ such that $x\in \mathcal{Y}_{b}(\mathbb{R})$, then $x=0$. Fix any $t\in \mathbb{R}$ and let
\[
V_{t}=P(t)x_{t}, \ \ \ U_{t}=Q(t)x_{t}.
\]
By \eqref{eq:soluPrj}, we have that  for any $s\geq 0$,
\[
V_{t}=P(t)T(t,t-s)x_{t-s}=T(t,t-s)P(t-s)V_{t-s}.
\]
Since equation \eqref{DDE} admits $\mathcal{E}(\alpha,K)$ on $\mathbb{R}$
and $V\in \mathcal{Y}_{b}(\mathbb{R})$,
we have the following estimates:
\[
\|V_{t}\|
\leq Ke^{-\alpha s} \|V\|_{\mathcal{Y}_{b}(\mathbb{R})}e^{-b|t-s|}
\leq K\|V\|_{\mathcal{Y}_{b}(\mathbb{R})}e^{bt} e^{-(\alpha+b)s}
\]
for all $s\geq \max\{0,t\}$. Letting $s\to +\infty$ yields $V_{t}=0$
because $\alpha+b>0$ for any $b\in (-\alpha,\alpha)$.
Similarly,  we have $U_t=0$.
Then we can conclude that $x=0$. This completes the proof.
\end{proof}


\section{Proper admissibility implies exponential dichotomy}\
\label{sec:AD-ED}

Let $b^+$ and $b^-$ be two real constants such that
$
b^-<0<b^+.
$
We can verify the following inclusions:
\begin{eqnarray*}
\mathcal{H}_{b^+}(\mathbb{R})\subset \mathcal{H}_{b^-}(\mathbb{R}),
\ \ \
\mathcal{Y}_{b^+}(\mathbb{R})\subset \mathcal{Y}_{b^-}(\mathbb{R}).
\end{eqnarray*}
In this section, we prove that the proper admissibility of two pairs $(\mathcal{H}_{b^\pm}(\mathbb{R}),\mathcal{Y}_{b^\pm}(\mathbb{R}))$ for equation \eqref{DDEnonhomo}
implies an exponential dichotomy of equation \eqref{DDE},
i.e., the sufficiency of Theorem \ref{thm:ed-admis}.
More precisely, we have the following result.

\begin{proposition}\label{prop-adm-to-NED}
Suppose that there exists a pair of constants $b^-$ and $b^+$ satisfying
\[
b^-<0<b^+
\]
such that both $(\mathcal{H}_{b^+}(\mathbb{R}),\mathcal{Y}_{b^+}(\mathbb{R}))$ and $(\mathcal{H}_{b^-}(\mathbb{R}),\mathcal{Y}_{b^-}(\mathbb{R}))$ are properly admissible
with respect to equation \eqref{DDEnonhomo}. Then equation \eqref{DDE} admits an exponential dichotomy on $\mathbb{R}$
with exponent $\beta=\min\{b^+,\,-b^-\}$.
\end{proposition}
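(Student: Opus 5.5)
We will follow the classical Perron scheme from Chapter 4 of \cite{Coppel-78}, adapted to the delay setting through the variation of constants formula \eqref{eq:CVF}.

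\textbf{Step 1: splitting of the phase space.} For $t\in\mathbb{R}$ we define the candidate stable and unstable subspaces
\[
S(t)=\{\phi\in C([-r,0],\mathbb{R}^n):\ \sup_{\tau\ge t}e^{b^+(\tau-t)}\|T(\tau,t)\phi\|<\infty\},
\]
and $U(t)$, the set of $\phi$ such that some solution $x$ of \eqref{DDE} on $(-\infty,t]$ with $x_t=\phi$ satisfies $\sup_{\tau\le t}e^{-b^-(\tau-t)}\|x_\tau\|<\infty$.

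\emph{Transversality.} If $\phi\in S(t)\cap U(t)$, the glued solution lies in $\mathcal{Y}_{b^-}(\mathbb{R})$ and solves the homogeneous equation. Uniqueness in $(\mathcal{H}_{b^-},\mathcal{Y}_{b^-})$ then gives $\phi=0$.

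\emph{Spanning.} This is where we construct the special inputs \eqref{df:hstar}, \eqref{df:hstar2}. Fix $\phi\in C([-r,0],\mathbb{R}^n)$ and a smooth cutoff $\chi$ with $\chi=0$ on $(-\infty,t-r_0]$ and $\chi=1$ on $[t,\infty)$. Because of the delay, we cannot simply cut off $T(\tau,t)\phi$ forward in time. Instead we take a function $z$ on $[t-r-r_0,\infty)$ that agrees with $\phi$ on $[t-r,t]$, extend $\phi$ backward as a constant, set $w(\tau)=\chi(\tau)z(\tau)$, and define
\[
h^*(\tau)=\dot w(\tau)-L(\tau)w_\tau .
\]
This $h^*$ is supported in a compact window of length about $r_0+r$ and lies in $\mathcal{H}_{b^\pm}$. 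Let $y$ be the unique solution of \eqref{DDEnonhomo} in $\mathcal{Y}_{b^+}$ with input $h^*$, so that $y\in\mathcal{Y}_{b^-}$ as well. Then $w-y$ solves the homogeneous equation on $[t,\infty)$ and decays at rate $b^+$ there, while $y$ solves the homogeneous equation backward from $t-r_0-r$ with $b^-$-growth control. The decomposition is
\[
\phi=(w_t-y_t)+y_t,
\]
with the first summand in $S(t)$ and the second in $U(t)$ after adjusting by the homogeneous flow. This gives projections $P(t)$ onto $S(t)$ along $U(t)$ satisfying \textbf{(E1)} and \textbf{(E2)}. Injectivity in \textbf{(E2)} again comes from uniqueness, and surjectivity comes from the definition of $U$.

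\textbf{Step 2: boundedness.} By the closed graph theorem, the solution operators $G^\pm:\mathcal{H}_{b^\pm}\to\mathcal{Y}_{b^\pm}$ are bounded. The construction in Step 1 then gives $\|P(t)\|\le C(1+\|G^+\|+\|G^-\|)$, uniformly in $t$, using \eqref{hyp-int} to bound $\|h^*\|$ in terms of $\|\phi\|$ and $M$.

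\textbf{Step 3: exponential rates (main obstacle).} Obtaining exactly the exponent $\beta=\min\{b^+,-b^-\}$ is the hard part.

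For $\phi\in S(s)$ we set $x(\tau)=T(\tau,s)\phi$ for $\tau\ge s$ and build a second input: $v=\chi_s x$, with $\chi_s$ a cutoff rising on $[s,s+r_0]$, and $h=\dot\chi_s\,x$. The term $L(\tau)$ applied to the cut-off history produces an additional piece, which is supported in $[s,s+r_0+r]$ and bounded by $M\|\phi\|$. Then $v$ is the unique $\mathcal{Y}_{b^+}$-solution with input $h$, since $v$ decays and vanishes near $-\infty$. The weighted norm centred at $s$ gives
\[
\sup_{\tau}e^{b^+|\tau-s|}|v(\tau)|\le\|G^+\|\,\|h\|\lesssim\|\phi\|.
\]
Here we need translation invariance of the constants: shifting the weight from $|\tau|$ to $|\tau-s|$ must not change $\|G^+\|$. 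We plan to handle this by observing that the shifted-weight spaces are the same, with equivalent norms, only up to a factor $e^{b|s|}$. That factor is not uniform, so we will instead argue via the shifted equation $L(\cdot+s)$ and verify that admissibility with bounded inverse persists uniformly under time shifts, using a Coppel-type argument; this is the delicate point. Doing this gives $\|T(\tau,s)P(s)\|\le Ke^{-b^+(\tau-s)}$. The unstable direction is treated symmetrically with $b^-$, yielding the bound with rate $-b^-$.

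Taking the minimum of the two rates gives $\beta$, and the $\sup$ over $[-r,0]$ in the phase-space norm is absorbed by one extra factor $e^{\beta r}$ in $K$.
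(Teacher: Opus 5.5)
\textbf{The spanning step fails.} Your transversality argument and your treatment of \textbf{(E1)}--\textbf{(E2)} are fine. The problem is that $h^*=\dot w-L(\cdot)w_\cdot$ contains $\dot w=\dot\chi z+\chi\dot z$ on $(t-r_0,t)$, where $z$ is built from $\phi$ itself. For merely continuous $\phi$ this derivative does not exist. For smooth $\phi$ it makes $\|h^*\|$ depend on $\dot\phi$, so you get neither the splitting of an arbitrary $\phi$ nor the $t$-uniform bound on $P(t)$ of Step 2; without a bound in terms of $\|\phi\|$, density does not help.

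The memberships are also unsupported. Since $h^*\neq0$ on $(t-r_0,t)$, $y$ is not a homogeneous solution up to time $t$, so nothing places $y_t$ in $U(t)$. Also $w_t\neq\phi$, because $\chi<1$ there, and $w-y$ decays on $[t,\infty)$ only if $w$ does.

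Your claim that one cannot cut off $T(\cdot,t)\phi$ forward in time is backwards: the forward cutoff you use in Step 3 is exactly what works here. Take $x_\sigma=T(\sigma,t)\phi$ and $z=c\,x$, with $c$ rising from $0$ to $1$ on $[t,t+\delta]$ (so $z_t=0$), and set $h=\dot z-L(\cdot)z_\cdot$. Then only $x$ is differentiated, via $\dot x=L(\cdot)x_\cdot$; $h$ vanishes off $[t,t+\delta+r]$; and $\|h\|_{\mathcal{H}_{b^\pm}}\le Ce^{b^\pm|t|}\|\phi\|$ with $C$ independent of $t$. Now let $y=G^\pm h$. Since $h=0$ on $(-\infty,t)$, you get $y_t\in U(t)$. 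Since $y-z$ is the homogeneous solution on $[t,\infty)$ with $(y-z)_t=y_t$, you get $y_\sigma=T(\sigma,t)(y_t+\phi)$ for $\sigma\ge t+\delta+r$. Hence $y_t+\phi\in S(t)$ and $\phi=(y_t+\phi)-y_t$. The irregular part of $\phi$ sits in the $S(t)$-component. Finally, $\|y_t\|\le e^{b^+r}\|G^+\|\,\|h\|_{\mathcal{H}_{b^+}}e^{-b^+|t|}\le C'\|\phi\|$ gives the uniform bound on $P(t)$.

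\textbf{The rate claimed in Step 3 is false.} The uniform-in-shift admissibility you defer is false in general, and so is $\|T(\tau,s)P(s)\|\le Ke^{-b^+(\tau-s)}$. Take $\dot x=a(t)x(t)$, i.e.\ $L(t)\phi=a(t)\phi(0)$, with $a=-10$ on $(0,\infty)$ and $a=-1$ on $(-\infty,0)$. The pairs with $b^+=5$ and $b^-=-\tfrac12$ are both properly admissible: the solution is $\int_{-\infty}^{\tau}e^{\int_u^{\tau}a(\sigma)\,d\sigma}h(u)\,du$, and nonzero homogeneous solutions grow like $e^{|\tau|}$ as $\tau\to-\infty$. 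Here $P\equiv Id$, yet $\|T(\tau,s)\|\ge e^{-(\tau-s)}$ for $s\le\tau\le0$.

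The missing idea is that no shift is needed: both weights are centred at $0$, and you have two of them. For $\phi\in S(s)$ your $v=\chi_s x$ equals $G^+h=G^-h$, so for $\tau\ge s+r_0$ you get $|x(\tau)|\le\|G^\pm\|\,\|h\|_{\mathcal{H}_{b^\pm}}e^{-b^\pm|\tau|}\le C\|G^\pm\|e^{b^\pm(|s|-|\tau|)}\|\phi\|$.
\begin{itemize}
\item When $\tau>s\ge0$, use $G^+$: here $|s|-|\tau|=s-\tau$.
\item When $s<\tau\le0$, use $G^-$: here $|s|-|\tau|=\tau-s$, and $b^-<0$ gives the decay $e^{b^-(\tau-s)}$.
\item When $s<0<\tau$, factor through time $0$ using $T(0,s)S(s)\subset S(0)$.
\end{itemize}
So the stable rate is $b^+$ on $[0,\infty)$ and $-b^-$ on $(-\infty,0]$, which is where $\min\{b^+,-b^-\}$ comes from.

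The unstable direction is analogous, using a cutoff on $[s-\delta,s]$ of the backward solution through $\phi\in U(s)$. There the roles are swapped: $G^-$ serves $[0,\infty)$ and $G^+$ serves $(-\infty,0]$. It is not ``$b^+$ for stable, $-b^-$ for unstable''.
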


The proof of this proposition needs the following four preparatory steps.

{\bf Step 1.} {\it Introduce two auxiliary operators by the proper admissibility.}

For each $h\in\mathcal{H}_{b^\pm}(\mathbb{R})$,
the proper admissibility of $(\mathcal{H}_{b^\pm}(\mathbb{R}),\mathcal{Y}_{b^\pm}(\mathbb{R}))$ implies that
equation \eqref{DDEnonhomo} has a unique solution $y\in \mathcal{Y}_{b^\pm}(\mathbb{R})$.
Then we define $\mathcal{A}^\pm: \mathcal{H}_{b^\pm}(\mathbb{R}) \to \mathcal{Y}_{b^\pm}(\mathbb{R})$ as
\begin{eqnarray}
\label{AAA}
\mathcal{A}^{\pm} h:=y.
\end{eqnarray}

\begin{lemma}\label{lm:bddoperator}
$\mathcal{A}^{\pm}:\mathcal{H}_{b^\pm}(\mathbb{R})\to \mathcal{Y}_{b^\pm}(\mathbb{R})$ are bounded linear operators.
\end{lemma}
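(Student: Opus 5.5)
Linearity of $\mathcal{A}^{\pm}$ comes straight from uniqueness. If $y_1=\mathcal{A}^\pm h_1$ and $y_2=\mathcal{A}^\pm h_2$, then for scalars $c_1,c_2$ the function $c_1y_1+c_2y_2$ is continuous and absolutely continuous on compact intervals. It belongs to $\mathcal{Y}_{b^\pm}(\mathbb{R})$ because that space is linear, and by linearity of $L(t)$ it satisfies \eqref{DDEnonhomo} almost everywhere with right-hand side $c_1h_1+c_2h_2$. Proper admissibility says the solution in $\mathcal{Y}_{b^\pm}(\mathbb{R})$ is unique, so $\mathcal{A}^\pm(c_1h_1+c_2h_2)=c_1y_1+c_2y_2$.

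For boundedness I will use the closed graph theorem. Both $\mathcal{H}_{b^\pm}(\mathbb{R})$ and $\mathcal{Y}_{b^\pm}(\mathbb{R})$ are Banach spaces, as noted in Section \ref{sec:admdich}, and $\mathcal{A}^\pm$ is defined on all of $\mathcal{H}_{b^\pm}(\mathbb{R})$. So it suffices to show the graph is closed. Take $h_k\to h$ in $\mathcal{H}_{b^\pm}(\mathbb{R})$ and $y_k:=\mathcal{A}^\pm h_k\to y$ in $\mathcal{Y}_{b^\pm}(\mathbb{R})$; I must show that $y$ solves \eqref{DDEnonhomo} with input $h$. Once that holds, uniqueness gives $y=\mathcal{A}^\pm h$.

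The key step is passing to the limit in the integrated form of the equation. Each $y_k$ satisfies
\[
y_k(t)-y_k(s)=\int_s^t L(u)(y_k)_u\,du+\int_s^t h_k(u)\,du,\qquad s\le t.
\]
Fix a compact interval $[s,t]$. Convergence in $\mathcal{Y}_{b^\pm}(\mathbb{R})$ implies uniform convergence of $y_k\to y$ on $[s-r,t]$, since the weight $e^{b|\tau|}$ is bounded above and below there. Hence $\sup_{u\in[s,t]}\|(y_k)_u-y_u\|\to0$. Then
\[
\Big|\int_s^t L(u)\big((y_k)_u-y_u\big)\,du\Big|\le \sup_{u\in[s,t]}\|(y_k)_u-y_u\|\int_s^t\|L(u)\|\,du\to0,
\]
because $\|L(\cdot)\|$ is locally integrable by {\bf (A3)}. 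Likewise, convergence in $\mathcal{H}_{b^\pm}(\mathbb{R})$ gives $\int_\tau^{\tau+r_0}|h_k-h|\to0$ uniformly over $\tau$ in a compact set, since the weight is again bounded below there. Covering $[s,t]$ by finitely many intervals of length $r_0$ then yields $\int_s^t|h_k-h|\,du\to0$. Letting $k\to\infty$ gives the integral identity for $y$ and $h$.

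This identity shows $y$ is absolutely continuous, and differentiating it gives \eqref{DDEnonhomo} almost everywhere. Measurability of $u\mapsto L(u)y_u$ follows from {\bf (A3)} together with continuity of $u\mapsto y_u$, approximating by step functions. So $y$ is a solution in $\mathcal{Y}_{b^\pm}(\mathbb{R})$, the graph is closed, and $\mathcal{A}^\pm$ is bounded.

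The only point needing care is this limit passage, in particular checking that convergence in the weighted norms localizes to uniform convergence of $y_k$ and $L^1$ convergence of $h_k$ on compact sets. The rest is routine.
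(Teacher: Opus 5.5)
Your proposal is correct and follows essentially the same route as the paper. Linearity comes from superposition plus uniqueness, and boundedness comes from the closed graph theorem: you pass to the limit in the integrated form of \eqref{DDEnonhomo} and use proper admissibility to conclude $y=\mathcal{A}^{\pm}h$. The paper writes explicit weighted estimates summed over blocks of length $r_0$, whereas you localize to a compact interval $[s,t]$, using uniform convergence of $y_k$ on $[s-r,t]$ and $L^1$ convergence of $h_k$ there. That localization is a slightly cleaner execution of the same argument, and you also explicitly check measurability of $u\mapsto L(u)y_u$.
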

\begin{proof}
We provide only the proof for $\mathcal{A}^-$,
as the proof for $\mathcal{A}^+$ is analogous.
It follows from  the superposition principle that  $\mathcal{A}^-$ is linear.
To prove that $\mathcal{A}^-$ is bounded,
it suffices to prove that $\mathcal{A}^-$ has a closed graph.
For a sequence $\{h_{n}\}_{n\in \mathbb{Z}^{+}}$ of $\mathcal{H}_{b^-}(\mathbb{R})$,
suppose that $h_{n}\to h$ in $\mathcal{H}_{b^-}(\mathbb{R})$ and $y_{n}=\mathcal{A}h_{n}\to y$ in $\mathcal{Y}_{b^-}(\mathbb{R})$ as $n\to +\infty$.
Then
\[
y_{n}(t)-y_{n}(s)= \int_{s}^{t} \int^{0}_{-r} d_{\theta}\eta(\tau,\theta)y_{n}(\tau+\theta)\,d\tau + \int_{s}^{t} h_{n}(\tau)d\tau
\ \ \ \mbox{ for $t\geq s$}.
\]
Noting that $h_n\in \mathcal{H}_{b^-}(\mathbb{R})$ and $y_n\in \mathcal{Y}_{b^-}(\mathbb{R})$,
we can compute that
\begin{eqnarray}\label{ineq:est-h-y}
\begin{aligned}
|y_{n}(t)-y(t)|&\leq e^{-b^-|t|}\|y_{n}-y\|_{\mathcal{Y}_{b^-}(\mathbb{R})},
\\
\int_{t}^{t+r_0}|h_{n}(\tau)-h(\tau)|\,d\tau &\leq r_0 e^{-b^-|t|}\|h_{n}-h\|_{\mathcal{H}_{b^-}(\mathbb{R})}.
\end{aligned}
\end{eqnarray}
Then for any fixed $t\geq s$ in $\mathbb{R}$,
\begin{eqnarray*}
&& \left| \int_{s}^{t} \int^{0}_{-r} d_{\theta}\eta(\tau,\theta)y_{n}(\tau+\theta)\, d\tau -\int_{s}^{t} \int^{0}_{-r} d_{\theta}\eta(\tau,\theta)y(\tau+\theta)\, d\tau \right|\\
&&\ \ \  \leq  e^{-b^-r} \|y_{n}-y\|_{\mathcal{Y}_{b^-}(\mathbb{R})} \int_{s}^{t} \|L(\tau)\|e^{-b^-|\tau|}\,d\tau \\
&&\ \ \  \leq  e^{-b^-r} \|y_{n}-y\|_{\mathcal{Y}_{b^-}(\mathbb{R})}
\sum_{m=0}^{[\frac{t-s}{r_0}]}\int_{s+([\frac{t-s}{r_0}]-m)r_0}^{s+([\frac{t-s}{r_0}]+1-m)r_0} \|L(\tau)\|e^{-b^-|\tau|}\,d\tau \\
&&\ \ \  \leq  e^{-b^-r} \|y_{n}-y\|_{\mathcal{Y}_{b^-}(\mathbb{R})}
\sum_{m=0}^{[\frac{t-s}{r_0}]}M r_0 e^{-b^-\left\{|s|+([\frac{t-s}{r_0}]+1-m)r_0\right\} }\\
&&\ \ \leq M r_0 e^{-b^-(r+r_0+|s|+t-s)} \|y_{n}-y\|_{\mathcal{Y}_{b^-}(\mathbb{R})} \sum_{m=0}^{[\frac{t-s}{r_0}]} e^{b^- m r_0} \\
&&\ \ \leq \frac{1}{1-e^{b^- r_0}} M r_0 e^{-b^-(r+r_0+|s|+t-s)}\|y_{n}-y\|_{\mathcal{Y}_{b^-}(\mathbb{R})},
\end{eqnarray*}
and
\begin{eqnarray*}
&&\left| \int_{s}^{t} h_n(\tau)\,d\tau-\int_{s}^{t}h(\tau)\,d\tau\right|\\
&&\ \ \leq  \sum_{m=0}^{[\frac{t-s}{r_0}]}e^{-b^-|s+([\frac{t-s}{r_0}]-m)r_0|}\left(e^{b^-|s+([\frac{t-s}{r_0}]-m)r_0|}\int_{s+([\frac{t-s}{r_0}]-m)r_0}^{s+([\frac{t-s}{r_0}]+1-m)r_0}|h_{n}(\tau)-h(\tau)|\,d\tau\right) \\
&&\ \ \leq  e^{-b^-(|s|+t-s)} \|h_{n}-h\|_{\mathcal{H}_{b^-}(\mathbb{R})}\sum_{m=0}^{[\frac{t-s}{r_0}]}e^{b^- m r_0 }\\
&&\ \ \leq  \frac{1}{1-e^{b^- r_0}}e^{-b^-(|s|+t-s)} \|h_{n}-h\|_{\mathcal{H}_{b^-}(\mathbb{R})}.
\end{eqnarray*}
Accordingly, using \eqref{ineq:est-h-y} yields
\[
\begin{aligned}
y(t)-y(s)
&=\lim_{n\to +\infty} (y_{n}(t)-y_{n}(s))\\
&=\lim_{n\to +\infty} \int_{s}^{t} \left(\int^{0}_{-r} d_{\theta}\eta(\tau,\theta)y_{n}(\tau+\theta)+h_{n}(\tau)\right) d\tau \\
&=\int_{s}^{t} \left( \int^{0}_{-r} d_{\theta}\eta(\tau,\theta)y(\tau+\theta)+h(\tau)\right) d\tau,
\end{aligned}
\]
which implies $\mathcal{A}^- h=y$. Hence, $\mathcal{A}^-$ is bounded.
This completes the proof of the lemma.
\end{proof}

{\bf Step 2.} {\it Establish the existence of invariant subspaces and projections.}

To get the decomposition of $C([-r,0],\mathbb{R}^n)$,
we introduce a special solution to equation \eqref{DDEnonhomo}.
For any $\tau\in\mathbb{R}$ and any nonzero function $\phi \in C([-r,0],\mathbb{R}^n)$,
define $x^{\star}:[\tau-r,+\infty)$ as
\begin{eqnarray}\label{df:xstar}
x^{\star}_t:=T(t,\tau)\phi,\ \ \ t\geq \tau.
\end{eqnarray}
By continuity, there exists $\delta_{\phi}>0$
such that $\|x^\star_t\|\neq 0$ for all $t\in [\tau,\tau+\delta_{\phi}]$.
Take any $\delta$ such that  $0<\delta<\min\{1,\delta_{\phi}\}$
and define $h^{\star}:\mathbb{R}\to \mathbb{R}^{n}$ as
\begin{eqnarray}\label{df:hstar}
h^{\star}(t):=
\left\{
\begin{aligned}
 \int^{0}_{-r} d_{\theta}\eta(t,\theta)\left(x^{\star}(t+\theta)\int_{t+\theta}^{t}\frac{1}{\delta_\star}\mathcal{G}_{\star}(u) du\right)
   +\frac{1}{\delta_\star}x^{\star}(t)\mathcal{G}_{\star}(t),
   \ & \ t\in [\tau,+\infty),\\
 0,  \ & \ t\in (-\infty,\tau),
\end{aligned}
\right.
\end{eqnarray}
where
\[
\delta_\star:=\int_{\tau}^{\tau+\delta}\frac{1}{\|x^{\star}_u\|}du,
\ \ \ \ \
\mathcal{G}_{\star}(t):=
\left\{
\begin{aligned}
\frac{1}{\|x^{\star}_t\|}, \ & \ t\in [\tau,\tau+\delta],\\
0, \ & \ t \notin [\tau,\tau+\delta].
\end{aligned}
\right.
\]
Note that
$h^{\star}(t)=0$ for all $t \in (-\infty, \tau) \cup [\tau+\delta+r,+\infty)$.
Then $h^{\star}\in \mathcal{H}_{b^+}(\mathbb{R})$.

Before showing the properly admissible pair associated with $h^{\star}$,
we give the following lemma.

\begin{lemma}\label{lm:uniq}
Suppose that  $(\mathcal{H}_{b^\pm}(\mathbb{R}),\mathcal{Y}_{b^\pm}(\mathbb{R}))$ are properly admissible with respect to \eqref{DDEnonhomo}.
For any $h\in \mathcal{H}_{b^+}(\mathbb{R})$,
if $(h,y^-)$ with $y^-\in \mathcal{Y}_{b^-}(\mathbb{R})$ and $(h,y^+)$ with $y^+ \in \mathcal{Y}_{b^+}(\mathbb{R})$
are two properly admissible pairs for \eqref{DDEnonhomo}, then $y^-=y^+ \in \mathcal{Y}_{b^+}(\mathbb{R})$.
\end{lemma}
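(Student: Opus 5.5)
The plan is to use the inclusions $\mathcal{H}_{b^+}(\mathbb{R})\subset \mathcal{H}_{b^-}(\mathbb{R})$ and $\mathcal{Y}_{b^+}(\mathbb{R})\subset \mathcal{Y}_{b^-}(\mathbb{R})$ noted at the start of this section, together with the uniqueness part of proper admissibility for the pair $(\mathcal{H}_{b^-}(\mathbb{R}),\mathcal{Y}_{b^-}(\mathbb{R}))$.

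First we check the inclusions. Since $b^-<0<b^+$, we have $e^{b^-|t|}\le 1\le e^{b^+|t|}$ for every $t$. This gives $\|h\|_{\mathcal{H}_{b^-}(\mathbb{R})}\le \|h\|_{\mathcal{H}_{b^+}(\mathbb{R})}$ and $\|y\|_{\mathcal{Y}_{b^-}(\mathbb{R})}\le \|y\|_{\mathcal{Y}_{b^+}(\mathbb{R})}$, and both inclusions follow; they are in fact continuous embeddings.

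Now fix $h\in\mathcal{H}_{b^+}(\mathbb{R})$, so $h\in\mathcal{H}_{b^-}(\mathbb{R})$. The function $y^+$ is a solution of \eqref{DDEnonhomo} with inhomogeneity $h$. It lies in $\mathcal{Y}_{b^+}(\mathbb{R})\subset\mathcal{Y}_{b^-}(\mathbb{R})$, so it is a solution in the output class $\mathcal{Y}_{b^-}(\mathbb{R})$ for the input $h\in\mathcal{H}_{b^-}(\mathbb{R})$. The function $y^-$ is also such a solution.

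The pair $(\mathcal{H}_{b^-}(\mathbb{R}),\mathcal{Y}_{b^-}(\mathbb{R}))$ is properly admissible, so this solution is unique. Hence $y^-=y^+=\mathcal{A}^-h$, and in particular $y^-\in\mathcal{Y}_{b^+}(\mathbb{R})$. Equivalently, $z=y^+-y^-$ solves the homogeneous equation \eqref{DDE} on all of $\mathbb{R}$ and lies in $\mathcal{Y}_{b^-}(\mathbb{R})$; uniqueness applied with $h=0$ gives $z=0$.

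There is no real obstacle. The one point to watch is that "solution" is meant on the whole line, in the sense used to define $\mathcal{A}^\pm$, so $y^+$ qualifies as an admissible solution for the $b^-$ pair without any further argument. The pair $(\mathcal{H}_{b^+}(\mathbb{R}),\mathcal{Y}_{b^+}(\mathbb{R}))$ is used only to guarantee that $y^+$ exists.
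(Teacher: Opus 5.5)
Your proof is correct and is essentially the paper's argument. The paper also observes that $y^- - y^+$ solves the homogeneous equation \eqref{DDE}, uses $\mathcal{Y}_{b^+}(\mathbb{R})\subset\mathcal{Y}_{b^-}(\mathbb{R})$ to place it in $\mathcal{Y}_{b^-}(\mathbb{R})$, and concludes $y^-=y^+$ from proper admissibility of the $b^-$ pair with input $0$.
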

\begin{proof}
Since $(h,y^\pm)\in (\mathcal{H}_{b^\pm}(\mathbb{R}),\mathcal{Y}_{b^\pm}(\mathbb{R}))$
are two properly admissible pairs for \eqref{DDEnonhomo},
we have that $y^\pm$ satisfy $\dot y^\pm = L(t)y^\pm_t+h(t)$.
Then $(y^--y^+)$ solves \eqref{DDE}.
Using the fact that $\mathcal{Y}_{b^+}(\mathbb{R})\subset \mathcal{Y}_{b^-}(\mathbb{R})$,
we get $(y^--y^+) \in \mathcal{Y}_{b^-}(\mathbb{R})$.
Accordingly, by the proper admissibility of the pair $(\mathcal{H}_{b^-}(\mathbb{R}),\mathcal{Y}_{b^-}(\mathbb{R}))$ and $0\in \mathcal{H}_{b^-}(\mathbb{R})$,
we have $y^-=y^+$. This completes the proof.
\end{proof}

By the proper admissibility of $(\mathcal{H}_{b^\pm}(\mathbb{R}),\mathcal{Y}_{b^\pm}(\mathbb{R}))$ and Lemma \ref{lm:uniq},
there exists $y^\star \in \mathcal{Y}_{b^+}(\mathbb{R})$ such that
$(h^\star,y^\star)$ is a properly admissible pair with respect to equation \eqref{DDEnonhomo}
in both $(\mathcal{H}_{b^+}(\mathbb{R}),\mathcal{Y}_{b^+}(\mathbb{R}))$ and $(\mathcal{H}_{b^-}(\mathbb{R}),\mathcal{Y}_{b^-}(\mathbb{R}))$.
We next provide some properties of $y^\star$ which are fundamental for the subsequent discussion.

\begin{lemma}\label{lm:imp}
Define $z^\star: [\tau-r,+\infty)\to \mathbb{R}^{n}$ as
\begin{eqnarray}\label{df:zstar}
z^\star(t):=x^\star(t)\int_{-\infty}^{t} \frac{1}{\delta_\star} \mathcal{G}_{\star}(u) du,
\end{eqnarray}
and let $w^\star :=y^\star-z^\star$.
Then
\begin{eqnarray*}
w^\star_t  \!\!\!&=&\!\!\! T(t,\tau)w^\star_\tau=T(t,\tau)y^\star_\tau,
\ \ \ t\geq \tau,\\
y^\star_t  \!\!\!&=&\!\!\! T(t,\tau)(y^\star_\tau+\phi),
~~~~~~~~~~~~~~~~~~~~~~ t\geq \tau+\delta+r.
\end{eqnarray*}
\end{lemma}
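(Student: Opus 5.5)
The plan is to show that $z^\star$ is a solution of the nonhomogeneous equation \eqref{DDEnonhomo} with forcing $h^\star$ on $[\tau,+\infty)$. Then $w^\star=y^\star-z^\star$ solves the homogeneous equation \eqref{DDE} there. Set
\[
g(t):=\int_{-\infty}^{t}\frac{1}{\delta_\star}\mathcal{G}_\star(u)\,du .
\]
This function is absolutely continuous and nondecreasing. It vanishes for $t\le\tau$ and equals $1$ for $t\ge\tau+\delta$, by the choice of $\delta_\star$.

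First I verify that $z^\star$ solves the forced equation. Since $z^\star(t)=x^\star(t)g(t)$, for almost every $t\ge\tau$ the product rule gives
\[
\dot z^\star(t)=\dot x^\star(t)g(t)+\frac{1}{\delta_\star}x^\star(t)\mathcal{G}_\star(t)
=\int_{-r}^0 d_\theta\eta(t,\theta)\,x^\star(t+\theta)g(t)+\frac{1}{\delta_\star}x^\star(t)\mathcal{G}_\star(t).
\]
Next I write $g(t)=g(t+\theta)+\int_{t+\theta}^t\frac{1}{\delta_\star}\mathcal{G}_\star(u)\,du$. The first piece recombines into $\int_{-r}^0 d_\theta\eta(t,\theta)z^\star(t+\theta)=L(t)z^\star_t$. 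The second piece, together with the last term, is exactly $h^\star(t)$ as defined in \eqref{df:hstar}.

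One point needs care. For $t+\theta<\tau$ the function $x^\star$ is only given by $\phi$ on $[\tau-r,\tau]$, so on $[\tau-r,\tau)$ one only has $z^\star=x^\star g=0$. This is consistent, since $g=0$ there, and the identity above still holds as written. In particular $z^\star_\tau=0$, because $g\equiv0$ on $[\tau-r,\tau]$.

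Hence $w^\star=y^\star-z^\star$ is absolutely continuous on $[\tau-r,\infty)$ and satisfies $\dot w^\star=L(t)w^\star_t$ for almost every $t\ge\tau$. By uniqueness of solutions (the existence theorem quoted from \cite{JKHale-Verduyn}), $w^\star_t=T(t,\tau)w^\star_\tau$ for $t\ge\tau$. Also $w^\star_\tau=y^\star_\tau-z^\star_\tau=y^\star_\tau$, which gives the first identity.

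For the second identity, take $t\ge\tau+\delta+r$. Then $t+\theta\ge\tau+\delta$ for every $\theta\in[-r,0]$, so $g\equiv1$ on $[t-r,t]$ and $z^\star_t=x^\star_t=T(t,\tau)\phi$. Therefore
\[
y^\star_t=w^\star_t+z^\star_t=T(t,\tau)y^\star_\tau+T(t,\tau)\phi=T(t,\tau)(y^\star_\tau+\phi).
\]

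The main obstacle is justifying the interchange in the Stieltjes-integral computation of the first step. The product rule requires that $\dot x^\star$ exist almost everywhere, which holds because $x^\star$ is absolutely continuous on $[\tau,\infty)$. Splitting $g(t)=g(t+\theta)+\big(g(t)-g(t+\theta)\big)$ inside $d_\theta\eta$ requires the integrand to be continuous in $\theta$, which holds because $x^\star$ and $g$ are continuous. I would also check that $h^\star$ is locally integrable, using the bound $|h^\star(t)|\le \|L(t)\|\,\|x^\star_t\|+\delta_\star^{-1}|x^\star(t)|\,\mathcal{G}_\star(t)$ together with {\bf (A3)}. Finally, I would note that $y^\star$ solves \eqref{DDEnonhomo} by its construction via Lemma \ref{lm:uniq}.
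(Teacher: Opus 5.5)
Your proposal is correct and is essentially the paper's argument. The paper verifies $\int_\tau^t L(\sigma)w^\star_\sigma\,d\sigma=w^\star(t)-w^\star(\tau)$ in integrated form: it merges $\int_{\sigma+\theta}^{\sigma}$ with $\int_{-\infty}^{\sigma+\theta}$ and then integrates by parts in $\sigma$, which is the integrated version of your splitting $g(t)=g(t+\theta)+\int_{t+\theta}^{t}\delta_\star^{-1}\mathcal{G}_\star(u)\,du$ followed by the product rule. Both arguments then conclude from $z^\star_\tau=0$ and $z^\star=x^\star$ on $[\tau+\delta,\infty)$.
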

\begin{proof}
Fix a time $t\geq \tau$.
Since $y^\star$ is a solution of equation \eqref{DDEnonhomo} with $h=h^{\star}$,
we have
\begin{eqnarray*}
\int_{\tau}^{t}L(\sigma)w^\star_\sigma \, d\sigma
\!\!\! &=&\!\!\! \int_{\tau}^{t}L(\sigma)y^\star_\sigma d\sigma-\int_{\tau}^{t}L(\sigma) z^\star_\sigma d\sigma \\
\!\!\!&=&\!\!\! y^\star(t)-y^\star(\tau)-\int_{\tau}^{t}h^\star(\sigma)d\sigma-\int_{\tau}^{t}L(\sigma) z^\star_\sigma d\sigma \\
\!\!\!&=&\!\!\! y^\star(t)-y^\star(\tau)-\int_{\tau}^{t} \int^{0}_{-r} d_{\theta}\eta(\sigma,\theta) \left( x^\star(\sigma+\theta)\int_{\sigma+\theta}^{\sigma}\frac{1}{\delta_{\star}}\mathcal{G}_{\star}(u) du \right) d\sigma \\
\!\!\!& &\!\!\! - \int_{\tau}^{t}\int^{0}_{-r} d_{\theta}\eta(\sigma,\theta) \left( x^\star(\sigma+\theta)\int_{-\infty}^{\sigma+\theta} \frac{1}{\delta_\star} \mathcal{G}_{\star}(u) du \right) d\sigma
         -\int_{\tau}^{t} \frac{1}{\delta_\star}x^{\star}(\sigma)\mathcal{G}_{\star}(\sigma) d\sigma \\
\!\!\!&=&\!\!\! y^\star(t)-y^\star(\tau)-\int_{\tau}^{t}L(\sigma)x^\star_\sigma\int_{-\infty}^{\sigma} \frac{1}{\delta_\star} \mathcal{G}_{\star}(u) du d\sigma
        -\int_{\tau}^{t} \frac{1}{\delta_\star}x^{\star}(\sigma)\mathcal{G}_{\star}(\sigma) d\sigma.
\end{eqnarray*}
To simplify the last line,
using integration by parts yields
\begin{eqnarray*}
&&\int_{\tau}^{t}L(\sigma)x^\star_\sigma\int_{-\infty}^{\sigma} \frac{1}{\delta_\star} \mathcal{G}_{\star}(u) du d\sigma  \\
&&= \left(\int_{\tau}^{\sigma} L(u)x^\star_u du \int_{-\infty}^{\sigma} \frac{1}{\delta_\star} \mathcal{G}_{\star}(u) du  \right) \Big|_{\sigma=\tau}^{\sigma=t}
 - \int_{\tau}^{t}\left(\int_{\tau}^{\sigma} L(u)x^\star_u du\right) \frac{1}{\delta_\star} \mathcal{G}_{\star}(\sigma) d\sigma \\
\!\!\!\! &&= (x^\star(t)-x^\star(\tau)) \int_{-\infty}^{t} \frac{1}{\delta_\star} \mathcal{G}_{\star}(u) d u
   - \int_{\tau}^{t}\left(x^\star(\sigma)-x^\star(\tau) \right) \frac{1}{\delta_\star} \mathcal{G}_{\star}(\sigma) d\sigma.
\end{eqnarray*}
Accordingly, we get
\begin{eqnarray*}
\int_{\tau}^{t}L(\sigma)w^\star_\sigma \, d\sigma
\!\!\! &=&\!\!\!
y^\star(t)-y^\star(\tau)-(x^\star(t)-x^\star(\tau)) \int_{-\infty}^{t} \frac{1}{\delta_\star} \mathcal{G}_{\star}(u) d u
     - x^\star(\tau)\int_{\tau}^{t}  \frac{1}{\delta_\star}\mathcal{G}_{\star}(u)  d u\\
\!\!\! &=&\!\!\!
y^\star(t)-y^\star(\tau)-x^\star(t)\int_{-\infty}^{t} \frac{1}{\delta_\star}\mathcal{G}_{\star}(u) du
+ x^\star(\tau)\int_{-\infty}^{\tau}  \frac{1}{\delta_\star}\mathcal{G}_{\star}(u) d u \\
\!\!\! &=&\!\!\!
y^\star(t)-y^\star(\tau)-z^\star(t)+z^\star(\tau)\\
\!\!\! &=&\!\!\!
w^\star(t)-w^\star(\tau),
\end{eqnarray*}
which implies that $w^\star$ is a solution of equation \eqref{DDE}.
This together with the fact that  $z^\star(\tau+\theta)=0$ for each $\theta\in [-r,0]$
yields the first assertion.

By the definition of $z^\star$ in \eqref{df:zstar}, we have that
\[
z^\star(t)=x^\star(t),\ \ \ \ \ t\geq \tau+\delta.
\]
Together with the first assertion of this lemma, we get that
\[
y^\star_t=w^\star_t+z^\star_t=T(t,\tau)y^{\star}_\tau +T(t,\tau)\phi=T(t,\tau)(y^\star_\tau+\phi),
\ \ \ t\geq \tau+\delta+r.
\]
This completes the proof.
\end{proof}

For each $\tau \in \mathbb{R}$,
define $\mathcal{S}^\pm(\tau)$ and $\mathcal{U}^\pm(\tau)$ as
\[
\mathcal{S}^\pm(\tau):=\left\{ \phi\in C([-r,0],\mathbb{R}^n):\ \sup_{t\geq \tau}e^{b^\pm|t|}\|T(t, \tau)\phi\|<+\infty \right\},
\]
and
\begin{eqnarray*}
\mathcal{U}^\pm(\tau):=\Biggl\{ \phi\in C([-r,0],\mathbb{R}^n):
\!\!\!&&\!\!\!\!\!
\mbox{ there exists a continuous function } y: (-\infty,\tau]\to \mathbb{R}^{n} \mbox{ such that }\\
\!\!\!&&\!\!\!  y_\tau=\phi,\, y_t=T(t,s)y_s \mbox{ for } s\leq t\leq \tau, \mbox{ and }\sup_{t\leq \tau} e^{b^\pm|t|}\|y_t\|<+\infty \Biggr\}.
\end{eqnarray*}

\begin{lemma}\label{lm:directsum}
$
C([-r,0],\mathbb{R}^n)=\mathcal{S}^\pm(\tau) \oplus \mathcal{U}^\pm(\tau)
$
for each $\tau\in\mathbb{R}$.
Furthermore, $\mathcal{S}^+(\tau)=\mathcal{S}^-(\tau)$ and $\mathcal{U}^+(\tau)=\mathcal{U}^-(\tau)$.
\end{lemma}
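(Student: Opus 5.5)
We plan to prove the decomposition $C([-r,0],\mathbb{R}^n)=\mathcal{S}^\pm(\tau)\oplus\mathcal{U}^\pm(\tau)$ in two parts: first the spanning property (every $\phi$ splits), then the trivial intersection. After that we identify the $+$ and $-$ spaces.

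\emph{Spanning.} Fix $\tau$ and a nonzero $\phi$; the case $\phi=0$ is trivial. Let $y^\star\in\mathcal{Y}_{b^+}(\mathbb{R})$ be the solution attached to $h^\star$ in \eqref{df:hstar}; by Lemma \ref{lm:uniq} it is also the $\mathcal{Y}_{b^-}$-solution. We set
\[
\phi=\bigl(\phi+y^\star_\tau\bigr)-y^\star_\tau .
\]

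For the first piece, Lemma \ref{lm:imp} gives $T(t,\tau)(y^\star_\tau+\phi)=y^\star_t$ for $t\geq\tau+\delta+r$. On the compact interval $[\tau,\tau+\delta+r]$ the orbit $T(t,\tau)(y^\star_\tau+\phi)$ is bounded by continuity. Since $y^\star\in\mathcal{Y}_{b^+}$ and $\|y^\star_t\|\le e^{b^+ r}\|y^\star\|_{\mathcal{Y}_{b^+}}e^{-b^+|t|}$ (shifting by at most $r$ changes $|t|$ by at most $r$), we get $\phi+y^\star_\tau\in\mathcal{S}^+(\tau)$. The same argument with $\mathcal{Y}_{b^-}\supset\mathcal{Y}_{b^+}$ gives membership in $\mathcal{S}^-(\tau)$.

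For the second piece, $-y^\star_\tau$, the candidate backward solution is $y=-y^\star$ on $(-\infty,\tau]$. Since $h^\star=0$ on $(-\infty,\tau)$, $y^\star$ solves the homogeneous equation \eqref{DDE} there, so $y_t=T(t,s)y_s$ for $s\le t\le\tau$. Moreover $\sup_{t\le\tau}e^{b^\pm|t|}\|y^\star_t\|<\infty$, again by the same shift estimate. Hence $-y^\star_\tau\in\mathcal{U}^+(\tau)\cap\mathcal{U}^-(\tau)$.

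\emph{Trivial intersection.} Let $\phi\in\mathcal{S}^\pm(\tau)\cap\mathcal{U}^\pm(\tau)$. Glue the backward solution $y$ on $(-\infty,\tau]$ to the forward orbit $T(t,\tau)\phi$. The result is a global solution of \eqref{DDE} lying in $\mathcal{Y}_{b^\pm}(\mathbb{R})$; continuity at the junction holds because $y_\tau=\phi$. This solution corresponds to $h=0$, so proper admissibility of $(\mathcal{H}_{b^\pm},\mathcal{Y}_{b^\pm})$, exactly as in the uniqueness part of Lemma \ref{lm:uniq}, forces it to vanish. Hence $\phi=0$.

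\emph{Equality of the spaces.} From $b^-<0<b^+$ we have the inclusions $\mathcal{S}^+(\tau)\subset\mathcal{S}^-(\tau)$ and $\mathcal{U}^+(\tau)\subset\mathcal{U}^-(\tau)$. Conversely, take $\phi\in\mathcal{S}^-(\tau)$ and decompose it as in the spanning step, $\phi=s+u$ with $s\in\mathcal{S}^+\subset\mathcal{S}^-$ and $u\in\mathcal{U}^+\subset\mathcal{U}^-$. Then $u=\phi-s\in\mathcal{S}^-\cap\mathcal{U}^-=\{0\}$, so $\phi=s\in\mathcal{S}^+(\tau)$. The same argument shows $\mathcal{U}^-(\tau)\subset\mathcal{U}^+(\tau)$. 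Linearity of $\mathcal{S}^\pm$ and $\mathcal{U}^\pm$ is immediate from linearity of $T$.

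The main obstacle is the spanning step, specifically justifying that $y^\star$ serves simultaneously as the $b^+$ and $b^-$ solution, and that its restriction to $(-\infty,\tau]$ is a genuine homogeneous backward orbit, which uses $h^\star=0$ there. Both points are already supplied by Lemma \ref{lm:uniq} and the support of $h^\star$. The remaining check is that the forward orbit identity of Lemma \ref{lm:imp} holds for every nonzero $\phi$, which is guaranteed by the choice of $\delta$.
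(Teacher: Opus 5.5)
Your proposal is correct and follows the paper's proof essentially step for step. It uses the same splitting $\phi=(\phi+y^\star_\tau)-y^\star_\tau$ via Lemma \ref{lm:imp} and the vanishing of $h^\star$ on $(-\infty,\tau)$, the same gluing argument with proper admissibility for $h=0$ to get a trivial intersection, and the same use of $\mathcal{S}^-(\tau)\cap\mathcal{U}^-(\tau)=\{0\}$ to obtain $\mathcal{S}^-(\tau)\subset\mathcal{S}^+(\tau)$ and $\mathcal{U}^-(\tau)\subset\mathcal{U}^+(\tau)$. Your bound on the orbit over $[\tau,\tau+\delta+r]$ and the shift estimate $\|y^\star_t\|\le e^{|b^\pm|r}\|y^\star\|_{\mathcal{Y}_{b^\pm}(\mathbb{R})}e^{-b^\pm|t|}$ just make explicit what the paper leaves implicit.
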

\begin{proof}
Fix a time $\tau \in \mathbb{R}$ and take any $\psi \in \mathcal{S}^\pm(\tau) \cap \mathcal{U}^\pm(\tau)$.
Then there exists a continuous function $\tilde{y}:(-\infty,\tau]\to \mathbb{R}^{n}$ such that
$\tilde{y}_\tau=\psi$, $\tilde{y}_t=T(t,s)\tilde{y}_s$ for $s\leq t\leq \tau$ and $\sup_{t\leq \tau}e^{b^\pm|t|}\|\tilde{y}_t\|<+\infty$.
Define $y: \mathbb{R}\to \mathbb{R}^{n}$ as
\[
y_{t}=
\left\{
\begin{aligned}
&\  T(t,\tau)\psi, &  \mbox{ if } t\geq \tau,\\
&\ \tilde{y}_{t},  &   \mbox{ if } t< \tau.
\end{aligned}
\right.
\]
It is clear that  $y$ is continuous and solves equation \eqref{DDE}.
Using the fact that $\psi\in \mathcal{S}^\pm(\tau)$ yields
\[
\sup_{t\geq \tau}e^{b^\pm|t|}\|y_t\|=\sup_{t\geq \tau}e^{b^\pm|t|}\|T(t,\tau)\psi\| <+\infty,
\]
we have
\[
\sup_{t\in \mathbb{R}}e^{b^\pm|t|}|y(t)|
\leq \max\left\{\sup_{t\leq \tau}e^{b^\pm|t|}\|\tilde{y}_t\|,
       \, \sup_{t\geq \tau}e^{b^\pm|t|}\|y_t\|\right\}<+\infty.
\]
This implies $y\in \mathcal{Y}_{b^\pm}(\mathbb{R})$.
Since $0\in \mathcal{H}_{b^\pm}(\mathbb{R})$ and the pair $(\mathcal{H}_{b^\pm}(\mathbb{R}),\mathcal{Y}_{b^\pm}(\mathbb{R}))$ is properly admissible,
we get $y=0$ and then $\psi=0$.
Hence $\mathcal{S}^\pm(\tau) \cap \mathcal{U}^\pm(\tau)=\{0\}$.

For any $\phi\in C([-r,0],\mathbb{R}^n)$,
without loss of generality, assume that $\phi\neq 0$.
Otherwise, $0=\phi \in \mathcal{S}^\pm(\tau)+\mathcal{U}^\pm(\tau)$.
Let the notations be defined as in Lemma \ref{lm:imp}.
Since $h^\star$ defined by \eqref{df:hstar} satisfies
that  $h^\star(t)=0$ for all $t<\tau$,
we have $y^\star_t=T(t,s)y^\star_s \mbox{ for } s\leq t\leq \tau$.
This together with $y^\star\in \mathcal{Y}_{b^\pm}(\mathbb{R})$ yields that
\[
\begin{aligned}
e^{b^\pm |t|}\|y^\star_t\|
& =e^{b^\pm |t|}\sup_{\theta\in [-r,0]}|y^\star(t+\theta)|\\
& \leq e^{b^\pm |t|} \|y^\star\|_{\mathcal{Y}_{b^\pm}(\mathbb{R})}e^{-b^\pm|t|+|b^\pm|r} \\
& \leq e^{|b^\pm| r} \|y^\star\|_{\mathcal{Y}_{b^\pm}(\mathbb{R})} \\
& <+\infty
\end{aligned}
\]
for $t\leq \tau$,
implying that $y^\star_{\tau}\in \mathcal{U}^\pm(\tau)$.
Using the fact that $y^\star \in \mathcal{Y}_{b^\pm}(\mathbb{R})$ and the second assertion of Lemma \ref{lm:imp},
we have $(y^\star_\tau+\phi) \in \mathcal{S}^\pm(\tau)$.
Then
\begin{eqnarray}\label{form:split}
\phi=(y^\star_\tau+\phi)-y^\star_{\tau} \in \mathcal{S}^\pm(\tau)+\mathcal{U}^\pm(\tau).
\end{eqnarray}
Hence we obtain the first assertion of this lemma.

Note that $\mathcal{S}^+(\tau) \subset \mathcal{S}^-(\tau)$ and $\mathcal{U}^+(\tau) \subset \mathcal{U}^-(\tau)$
because $b^-<b^+$. To prove the second assertion, we only need to prove that
$\mathcal{S}^-(\tau) \subset \mathcal{S}^+(\tau)$ and $\mathcal{U}^-(\tau) \subset \mathcal{U}^+(\tau)$.
Consider $\phi \in \mathcal{S}^-(\tau)$.
If $\phi =0$, then $\phi=0 \in \mathcal{S}^+(\tau)$.
If $\phi \neq 0$,
using \eqref{form:split} and $\phi \in \mathcal{S}^-(\tau)$, we get $y^\star_\tau=0$.
Using  \eqref{form:split} again, we obtain
$
\phi=y^\star_\tau+\phi\in \mathcal{S}^+(\tau).
$
Hence  $\mathcal{S}^-(\tau) \subset \mathcal{S}^+(\tau)$.
In contrast,
consider $\phi \in \mathcal{U}^-(\tau)$. If $\phi = 0$,
then $\phi=0 \in \mathcal{U}^+(\tau)$. If  $\phi \neq 0$,
then by \eqref{form:split} we get $y^\star_\tau+\phi=0 \in \mathcal{S}^-(\tau)$.
This together with the inclusion $-y^\star_\tau\in \mathcal{U}^+(\tau)$ yields that $\phi=-y^\star_\tau\in \mathcal{U}^+(\tau)$.
Hence $\mathcal{U}^-(\tau) \subset \mathcal{U}^+(\tau)$.
This completes the proof.
\end{proof}

By Lemma \ref{lm:directsum},
we make the notations
\[
\mathcal{S}(\tau):=\mathcal{S}^+(\tau)=\mathcal{S}^-(\tau),
\qquad
\mathcal{U}(\tau):=\mathcal{U}^+(\tau)=\mathcal{U}^-(\tau),
\]
and let $P(\tau)$ denote the projection on $\mathcal{S}(\tau)$ along $\mathcal{U}(\tau)$.

\begin{lemma}\label{lm:prj}
$\|P(\tau)\|\leq M_1:=r^{-1}_0e^{b^+ (2r+1)}\|\mathcal{A}^+\| (M r_0+1)+1$,
where $\mathcal{A}^+$ is defined in (\ref{AAA}).
\end{lemma}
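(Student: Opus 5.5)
Since $\phi = (y^\star_\tau+\phi) - y^\star_\tau$ with $y^\star_\tau\in\mathcal{U}(\tau)$ and $y^\star_\tau+\phi\in\mathcal{S}(\tau)$ by \eqref{form:split}, we have $P(\tau)\phi = \phi + y^\star_\tau$. Hence
\[
\|P(\tau)\phi\|\le \|\phi\| + \|y^\star_\tau\|,
\]
and the whole task is to bound $\|y^\star_\tau\|$ by a constant multiple of $\|\phi\|$. This is the origin of the ``$+1$'' in $M_1$. Here $y^\star=\mathcal{A}^+h^\star$, so Lemma \ref{lm:bddoperator} gives
\[
\|y^\star_\tau\|\le \sup_{\theta\in[-r,0]}|y^\star(\tau+\theta)|\le e^{b^+ r}e^{-b^+|\tau|}\|\mathcal{A}^+\|\,\|h^\star\|_{\mathcal{H}_{b^+}(\mathbb{R})},
\]
using $e^{-b^+|\tau+\theta|}\le e^{b^+r}e^{-b^+|\tau|}$. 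It therefore suffices to show
\[
\|h^\star\|_{\mathcal{H}_{b^+}(\mathbb{R})}\le r_0^{-1}e^{b^+(r+1)}e^{b^+|\tau|}(Mr_0+1)\|\phi\|,
\]
because the weight $e^{b^+|\tau|}$ then cancels. The remaining factor $e^{b^+(2r+1)}$ is built from $e^{b^+ r}$ and from the window of length $\delta+r<1+r$ on which $h^\star$ is supported.

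To estimate $h^\star$, I would use that $h^\star$ vanishes outside $[\tau,\tau+\delta+r)$. For any $t$ with $[t,t+r_0]$ meeting this support, $|t|\le|\tau|+1+r$ up to the window shift, so $e^{b^+|t|}\le e^{b^+(|\tau|+1+r)}$. Since the $\mathcal{H}$-norm involves $\sup_t$ of integrals over windows of length $r_0$, I would simply bound each window integral by the total integral $\int_\tau^{\tau+\delta+r}|h^\star|$. If the exponent bookkeeping is off by a factor, the positions of the windows can be tuned, but the structure of the bound is clear.

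The key step, and the main obstacle, is bounding $\int|h^\star|$ independently of $\delta$ and of the possibly small values of $\|x^\star_u\|$. The normalization by $\delta_\star$ is designed exactly for this:
\[
\int\frac{1}{\delta_\star}\mathcal{G}_\star=1.
\]
For the second term of $h^\star$,
\[
\int\frac{1}{\delta_\star}|x^\star(t)|\,\mathcal{G}_\star(t)\,dt\le \int_\tau^{\tau+\delta}\frac{1}{\delta_\star}\frac{\|x^\star_t\|}{\|x^\star_t\|}\,dt=\frac{\delta}{\delta_\star}.
\]
This is a weak point, since $\delta/\delta_\star$ is of order $\|x^\star\|$ on $[\tau,\tau+\delta]$, roughly $\|\phi\|$ for small $\delta$. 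To get a clean bound I would take $\delta$ small enough that $\|x^\star_u\|\le 2\|\phi\|$ (or arrange $\le\|\phi\|$ up to an arbitrarily small error, then let $\delta\to0$, noting that $y^\star_\tau$ gives $P(\tau)\phi$ independently of $\delta$). For the first term, the inner factor satisfies $|x^\star(t+\theta)|\int_{t+\theta}^t\frac{1}{\delta_\star}\mathcal{G}_\star\le \sup_{u\in[\tau-r,\tau+\delta]}|x^\star(u)|\cdot 1$, since $\mathcal{G}_\star$ vanishes beyond $\tau+\delta$ and the inner integral is zero once $t+\theta>\tau+\delta$. So the first term is at most $\|L(t)\|\cdot\sup\|x^\star_u\|\approx\|L(t)\|\|\phi\|$, and integrating over an interval of length at most $r+\delta\le r_0$ gives at most $Mr_0\|\phi\|$ by \eqref{hyp-int}. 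Adding the two terms yields $(Mr_0+1)\|\phi\|$ up to the $\delta$-limit, and assembling the constants gives $\|P(\tau)\|\le M_1$.
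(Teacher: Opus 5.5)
Your proposal follows the paper's proof. You write $P(\tau)\phi=\phi+y^\star_\tau$ and bound $\|y^\star_\tau\|\le e^{b^+r}e^{-b^+|\tau|}\|\mathcal{A}^+\|\,\|h^\star\|_{\mathcal{H}_{b^+}(\mathbb{R})}$. You then use the support of $h^\star$, bound each window integral $\int_s^{s+r_0}|h^\star|$ by $(Mr_0+1)c_\delta$ with $c_\delta\to\|\phi\|$, and let $\delta\to0$, which is legitimate because $P(\tau)\phi$ does not depend on $\delta$. The one difference is in the first term of $h^\star$, and it is immaterial. You bound $|x^\star(t+\theta)|$ by $\sup_{u\in[\tau,\tau+\delta]}\|x^\star_u\|$ and use $\int\delta_\star^{-1}\mathcal{G}_\star=1$. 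The paper instead uses $|x^\star(u+\theta)|\le\|x^\star_\sigma\|$ for $\sigma\in[u+\theta,u]$ to cancel against $\mathcal{G}_\star(\sigma)=\|x^\star_\sigma\|^{-1}$, which gives the same factor $\delta/\delta_\star$ as the second term. Both factors tend to $\|\phi\|$.

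One step needs repair. Lemma \ref{lm:prj} is proved under \textbf{(A1)}--\textbf{(A3)} only. The relation $r<r_0$ comes from \textbf{(A4)}, which is not in force in Section~\ref{sec:AD-ED}, so you cannot assume $r+\delta\le r_0$. If $r>r_0$, bounding each window integral by the total integral over $[\tau,\tau+\delta+r]$ loses a factor of order $\lceil (r+\delta)/r_0\rceil$ in the $L$-term. Instead, integrate your pointwise bound $\|L(u)\|\sup_{v\in[\tau,\tau+\delta]}\|x^\star_v\|$ over the window $[s,s+r_0]$ itself and apply \eqref{hyp-int} there, as in \eqref{est:hstar}. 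For the second term, its total mass suffices.

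Also, the window positions cannot be ``tuned'': the $\mathcal{H}_{b^+}$-norm is a supremum over all $s$. Windows with $s\in(\tau-r_0,\tau)$ also meet the support, with weight up to $e^{b^+r_0}$ relative to $e^{b^+|\tau|}$. The paper's proof ignores these windows as well, since it restricts to $s\in[\tau,\tau+\delta+r]$, so this affects only the explicit constant. Done correctly, the route yields $r_0^{-1}e^{b^+(r+\max\{r_0,r\})}\|\mathcal{A}^+\|(Mr_0+1)+1$, which is at most $M_1$ when $r_0\le 1+r$.
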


\begin{proof}
Take any $\phi \in C([-r,0],\mathbb{R}^n)$.
If $\phi \in \mathcal{S}(\tau)$, we have $\|P(\tau)\phi\|=\|\phi\|$.
If $\phi \notin \mathcal{S}(\tau)$,
then $x_t=T(t,\tau)\phi\neq 0$ for all $t\geq \tau$.
Otherwise, if there exists a time $t_0\geq \tau$ such that $x_{t_0}=0$,
then $\phi\in\mathcal{S}(\tau)$.
For this $\phi \notin \mathcal{S}(\tau)$,
let the notations be defined as in the proof of Lemma \ref{lm:imp}
and take any $\delta$ such that $0<\delta<\min\{1,\delta_\phi\}$.
By the proof of Lemma \ref{lm:directsum}, we have
\begin{eqnarray}\label{df:prj}
\phi=P(\tau)\phi=y^{\star}_{\tau}+\phi.
\end{eqnarray}
Noting  $y^{\star}\in \mathcal{Y}_{b^+}(\mathbb{R})$, by Lemma \ref{lm:bddoperator} we get that
\[
\begin{aligned}
\|y^{\star}_{\tau}\|
& \leq \|y^{\star}\|_{\mathcal{Y}_{b^+}(\mathbb{R})}\sup_{\theta\in [-r,0]}e^{-b^+|\tau+\theta|} \\
& \leq e^{b^+ r}\|\mathcal{A}^+\| \|h^{\star}\|_{\mathcal{H}_{b^+}(\mathbb{R})}e^{-b^+ |\tau|}\\
& \leq e^{b^+ r} \|\mathcal{A}^+\|e^{-b^+ |\tau|}
\sup_{s\in \mathbb{R}}\frac{1}{r_0}e^{b^+|s|}\int_{s}^{s+r_0}|h^{\star}(u)| du \\
& \leq \frac{1}{r_0} e^{b^+ r}\|\mathcal{A}^+\|\sup_{\tau \leq s\leq \tau+\delta+r } e^{b^+(|s|-|\tau|)}\int_{s}^{s+r_0}|h^{\star}(u)| du\\
& \leq \frac{1}{r_0} e^{b^+ (2r+\delta)}\|\mathcal{A}^+\| \left( \sup_{\tau \leq s\leq \tau+\delta+r }\int_{s}^{s+r_0}|h^{\star}(u)| du\right).
\end{aligned}
\]
For any $\theta\in [-r,0]$, $u\geq \tau$ and $\sigma\in [u+\theta,u]$,
we have
\[
\sigma-r\leq u+\theta \leq \sigma,
\ \ \
|x^{\star}(u+\theta)| \leq \|x^{\star}_\sigma\|.
\]
It follows that
\[
\begin{aligned}
|x^{\star}(u+\theta)|\int_{u+\theta}^{u}\frac{1}{\delta_\star}|\mathcal{G}_{\star}(\sigma)|d \sigma
& = \frac{1}{\delta_\star}\int_{{\rm supp}(\mathcal{G}_\delta)\cap [u+\theta,u]} |x^{\star}(u+\theta)|\|x^{\star}_\sigma\|^{-1}d\sigma \\
& \leq \frac{1}{\delta_\star}\int_{{\rm supp}(\mathcal{G}_\delta)\cap [u+\theta,u]}d\sigma
  \leq \frac{\delta}{\delta_\star},
\end{aligned}
\]
where we use ${\rm supp}(\mathcal{G}_\delta)\subset [\tau,\tau+\delta]$ in the last inequality.
Then
\begin{eqnarray}\label{est:hstar}
\begin{aligned}
\! \int_{s}^{s+r_0}|h^{\star}(u)| du \!
\leq &\,
\! \int_{s}^{s+r_0}\Big| \int^{0}_{-r} d_{\theta}\eta(u,\theta)\left(x^{\star}(u+\theta)\int_{u+\theta}^{u}\frac{1}{\delta_\star}\mathcal{G}_{\star}(\sigma) d \sigma\right) \Big| du  +\int_{s}^{s+r_0}\frac{1}{\delta_\star}\Big|x^{\star}(u)\mathcal{G}_{\star}(u)\Big|du \\
\leq &\,
\int_{s}^{s+r_0}\|L(u)\|\sup_{-r\leq \theta\leq 0}\left(|x^{\star}(u+\theta)|\int_{u+\theta}^{u}\frac{1}{\delta_\star}|\mathcal{G}_{\star}(\sigma)|d \sigma \right) du
+ \delta \delta_\star^{-1}\\
\leq & \, \delta \delta_\star^{-1}\int_{s}^{s+r_0}\|L(u)\| du +\delta \delta_\star^{-1}.
\end{aligned}
\end{eqnarray}
Accordingly, we get
\[
\begin{aligned}
\|P(\tau)\phi\|
\leq \|y^{\star}_{\tau}\|+\|\phi\|
\leq \frac{1}{r_0} e^{b^+ (2r+\delta)}\|\mathcal{A}^+\| (M r_0+1)\delta \delta_\star^{-1}+\|\phi\|.
\end{aligned}
\]
Note that the left-hand side of the above estimate is independent of $\delta$ and
recall that $\delta^\star$ is defined below \eqref{df:hstar}.
Letting $\delta\to 0$ yields
\[
\begin{aligned}
\|P(\tau)\phi\|
\leq \left(\frac{1}{r_0} e^{b^+ (2r+1)}\|\mathcal{A}^+\| (M r_0+1)+1\right)
\|\phi\|.
\end{aligned}
\]
Therefore, the proof is completed by setting
$M_1=r^{-1}_0e^{b^+ (2r+1)}\|\mathcal{A}^+\| (M r_0+1)+1$.
\end{proof}

\begin{lemma}\label{lm:invariance}
For $t\geq \tau$,
\[
T(t,\tau) \mathcal{S}(\tau)\subset \mathcal{S}(t),
\ \ \
T(t,\tau) \mathcal{U}(\tau)\subset \mathcal{U}(t),
\]
and $T(t,\tau)|_{\mathcal{U}(\tau)}: \mathcal{U}(\tau)\to \mathcal{U}(t)$ is an isomorphism.
\end{lemma}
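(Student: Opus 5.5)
The two inclusions follow directly from the definitions of $\mathcal{S}(\tau)$ and $\mathcal{U}(\tau)$. The real content is that $T(t,\tau)$ maps $\mathcal{U}(\tau)$ onto $\mathcal{U}(t)$ injectively, which I will get by combining the proper admissibility with the decomposition of Lemma~\ref{lm:directsum}.

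\emph{Stable invariance.} Let $\phi\in\mathcal{S}(\tau)=\mathcal{S}^+(\tau)$ and $t\ge\tau$. For $s\ge t$ the cocycle property gives $T(s,t)T(t,\tau)\phi=T(s,\tau)\phi$. Hence
\[
\sup_{s\ge t}e^{b^+|s|}\|T(s,t)T(t,\tau)\phi\|\le\sup_{s\ge\tau}e^{b^+|s|}\|T(s,\tau)\phi\|<+\infty,
\]
so $T(t,\tau)\phi\in\mathcal{S}(t)$.

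\emph{Unstable invariance.} Let $\phi\in\mathcal{U}(\tau)$, with backward solution $y$ on $(-\infty,\tau]$ satisfying $y_\tau=\phi$. Extend $y$ to $(-\infty,t]$ by $y_s=T(s,\tau)\phi$ for $s\in[\tau,t]$. The extension is continuous and satisfies $y_s=T(s,s')y_{s'}$ for $s'\le s\le t$. On the compact interval $[\tau,t]$ the weighted norm stays bounded, since $T$ is bounded on compact time intervals (Gronwall with \eqref{hyp-int}). Therefore $T(t,\tau)\phi=y_t\in\mathcal{U}(t)$.

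\emph{Injectivity.} Suppose $\phi\in\mathcal{U}(\tau)$ and $T(t,\tau)\phi=0$. Then $T(s,\tau)\phi=0$ for all $s\ge t$, so $\phi\in\mathcal{S}(\tau)$. Since $\mathcal{S}(\tau)\cap\mathcal{U}(\tau)=\{0\}$ by Lemma~\ref{lm:directsum}, we get $\phi=0$.

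\emph{Surjectivity.} Let $\psi\in\mathcal{U}(t)$ with backward solution $y$ on $(-\infty,t]$. Set $\phi:=y_\tau$. By definition $T(t,\tau)\phi=y_t=\psi$. It remains to show $\phi\in\mathcal{U}(\tau)$. The restriction of $y$ to $(-\infty,\tau]$ is a backward solution through $\phi$, and its weighted bound $\sup_{s\le\tau}e^{b^\pm|s|}\|y_s\|<\infty$ is inherited from the bound on $(-\infty,t]$. So $\phi\in\mathcal{U}(\tau)$, and $T(t,\tau)|_{\mathcal{U}(\tau)}$ is a bijection onto $\mathcal{U}(t)$.

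\emph{Continuity of the inverse.} This is the main obstacle. $\mathcal{U}(t)$ is closed, being the kernel of the bounded projection $P(t)$ from Lemma~\ref{lm:prj}, so it is a Banach space, as is $\mathcal{U}(\tau)$. The map $T(t,\tau)|_{\mathcal{U}(\tau)}$ is a bounded bijection between them, so the open mapping theorem gives a bounded inverse and hence an isomorphism.

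The delicate point is uniqueness of backward continuations: the surjectivity argument needs the preimage $\phi$ to be well defined. This is secured by the injectivity just proved, so any two backward continuations through $\psi$ agree at time $\tau$.
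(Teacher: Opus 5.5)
Your proof is correct and follows essentially the paper's route: the same cocycle argument for $\mathcal{S}$, the same gluing of the backward solution with $T(s,\tau)\phi$ on $[\tau,t]$ for $\mathcal{U}$, and the same restriction-to-$(-\infty,\tau]$ argument for surjectivity. The differences are minor. For injectivity you invoke $\mathcal{S}(\tau)\cap\mathcal{U}(\tau)=\{0\}$ from Lemma~\ref{lm:directsum}, whereas the paper reruns the gluing-plus-proper-admissibility argument directly. You also add an open mapping argument for the bounded inverse, using closedness of $\mathcal{U}(t)=\mathcal{N}(P(t))$ from Lemma~\ref{lm:prj}; the paper leaves this step implicit.
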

\begin{proof}
Take $t, \tau\in \mathbb{R}$ such that $t\geq \tau$.
For any $\phi\in \mathcal{S}^\pm(\tau)$, we have
\[
\sup_{s\geq \tau}e^{b^\pm|s|}\|T(s, \tau)\phi\|<+\infty.
\]
It follows that
\[
\sup_{s\geq t}e^{b^\pm|s|}\|T(s, t)(T(t,\tau)\phi)\|
 =\sup_{s\geq t}e^{b^\pm|s|}\|T(s,\tau)\phi)\|
  \leq \sup_{s\geq \tau}e^{b^\pm|s|}\|T(s, \tau)\phi\|<+\infty.
\]
This implies that $T(t,\tau) \mathcal{S}(\tau)\subset \mathcal{S}(t)$.

For any $\phi\in \mathcal{U}^\pm(\tau)$,
there exists $y: (-\infty,\tau]\to \mathbb{R}^{n}$ such that
$y_\tau=\phi$, $y_u=T(u,s)y_s$ for $s\leq u\leq \tau$,
and $\sup_{s\leq \tau}e^{b^\pm|s|}\|y_s\|<+\infty$.
Define $\tilde{y}: \mathbb{R}\to \mathbb{R}^{n}$ by
\begin{eqnarray*}
\tilde{y}_u :=\left\{
\begin{aligned}
&\ y_u, && \mbox{ if } u\leq \tau,\\
&\ T(u,\tau)\phi, && \mbox{ if } u>\tau.
\end{aligned}
\right.
\end{eqnarray*}
It is clear that $\tilde{y}$ is continuous,
$\tilde{y}_{u}=T(u,s)\tilde{y}_{s}$ for $s\leq u\leq t$, and
\begin{eqnarray}\label{est:y-neg}
\sup_{s\leq t}e^{b^\pm|s|}\|\tilde{y}_s\|
=\max\left\{\sup_{s\leq \tau}e^{b^\pm|s|}\|y_s\|, \sup_{\tau\leq s\leq t}e^{b^\pm|s|}\|T(s,\tau)\phi\|\right\}<+\infty.
\end{eqnarray}
Then $\tilde{y}_{t}=T(t,\tau)\phi\in \mathcal{U}^\pm(t)$,
implying $T(t,\tau) \mathcal{U}(\tau)\subset \mathcal{U}(t)$.

For this $\phi \in \mathcal{U}^\pm(\tau)$,
if $T(t,\tau)\phi =0$,
then
\[
\tilde{y}_{u}=T(u,\tau)\phi=T(u,t)T(t,\tau)\phi=0,\ \ \ u\geq t.
\]
This together with \eqref{est:y-neg} implies $\tilde{y}\in \mathcal{H}_{b^\pm}(\mathbb{R})$.
By the proper admissibility of the pair $(0,\tilde{y})$,  we get $\tilde{y}=0$.
Then $\phi=0$ which yields that
$T(t,\tau)|_{\mathcal{U}(\tau)}: \mathcal{U}^\pm(\tau)\to \mathcal{U}^\pm(t)$ is injective.
So is $T(t,\tau)|_{\mathcal{U}(\tau)}: \mathcal{U}(\tau)\to \mathcal{U}(t)$.

Finally, we prove that this map is also surjective.
For any $\hat{\phi}\in\mathcal{U}^\pm(t)$,
there exists $\hat{y}: (-\infty,t]\to \mathbb{R}^{n}$ such that
$\hat{y}_t=\hat{\phi}$, $\hat{y}_u=T(u,s)\hat{y}_s$ for $s\leq u\leq t$,
and $\sup_{s\leq t}e^{b^\pm|s|}\|\hat{y}_s\|<+\infty$.
Particularly, we have
\[
T(t,\tau)\hat{y}_{\tau}=\hat{\phi},
\ \ \
\sup_{s\leq \tau}e^{b^\pm|s|}\|\hat{y}_s\|<+\infty,
\]
implying that $T(t,\tau)|_{\mathcal{U}^\pm(\tau)}: \mathcal{U}^\pm(\tau)\to \mathcal{U}^\pm(t)$ is surjective.
It follows that $T(t,\tau)|_{\mathcal{U}(\tau)}: \mathcal{U}(\tau)\to \mathcal{U}(t)$ is an isomorphism for $t\geq \tau$.
This completes the proof.
\end{proof}

{\bf Step 3.} {\it Estimate the decay rate along the stable subspace.}

By the proper admissibility of the pair $(h^\star,y^\star)$,
we show the decay rate in the stable direction.

\begin{lemma}\label{lm:est:stab}
Let $\beta=\min\{b^+,\,-b^-\}$, $\tilde{\beta}=\max\{b^+,\,-b^-\}$,  and
\begin{eqnarray*}
K_1=\left\{r^{-1}_0(M r_0+1)e^{\tilde{\beta}(1+2r)} \max\left\{\|\mathcal{A}^+\|,\;
\|\mathcal{A}^-\|\right\}+e^{\tilde{\beta} r}\right\}^2\, (M_1+1),
\end{eqnarray*}
where $M_1$ is defined as in Lemma \ref{lm:prj}.
Then for each $\psi \in C([-r,0],\mathbb{R}^n)$,
\[
\|T(t,\tau)P(\tau)\psi\|\leq K_1 e^{-\beta(t-\tau)}\|\psi\|,
\ \ \ t\geq \tau.
\]
\end{lemma}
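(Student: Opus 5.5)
The plan is to reuse the construction of $h^\star$ from \eqref{df:hstar}, now with the initial function taken in $\mathcal{S}(\tau)$. Fix $\tau$ and $\psi$, and set $\phi:=P(\tau)\psi$, so $\|\phi\|\le M_1\|\psi\|$ by Lemma \ref{lm:prj}; it therefore suffices to bound $\|T(t,\tau)\phi\|$ for $\phi\in\mathcal{S}(\tau)$. If $T(t_0,\tau)\phi=0$ for some $t_0\ge\tau$, the estimate is trivial beyond $t_0$. Otherwise we build $x^\star,h^\star,y^\star,z^\star$ as in Lemma \ref{lm:imp}, with $\delta<\min\{1,\delta_\phi\}$.

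\textbf{Step 1: $y^\star$ reduces to $z^\star$.} By \eqref{form:split} we have $y^\star_\tau\in\mathcal{U}(\tau)$ and $y^\star_\tau+\phi\in\mathcal{S}(\tau)$. Since $\phi\in\mathcal{S}(\tau)$ and the sum in Lemma \ref{lm:directsum} is direct, $y^\star_\tau=0$. Lemma \ref{lm:imp} then gives $w^\star\equiv 0$ on $[\tau,\infty)$, so $y^\star=z^\star$ there. In particular $y^\star(t)=x^\star(t)$ for $t\ge\tau+\delta$, and $y^\star_t=T(t,\tau)\phi$ for $t\ge \tau+\delta+r$.

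\textbf{Step 2: admissibility bound from a single base point.} Using Lemma \ref{lm:bddoperator} with both $\mathcal{A}^+$ and $\mathcal{A}^-$ (Lemma \ref{lm:uniq} says the same $y^\star$ is produced by both), together with \eqref{est:hstar}, we get
\[
\|T(t,\tau)\phi\|\le e^{|b^\pm|r}\|\mathcal{A}^\pm\|\,\|h^\star\|_{\mathcal{H}_{b^\pm}}e^{-b^\pm|t|},\qquad \|h^\star\|_{\mathcal{H}_{b^\pm}}\le r_0^{-1}e^{|b^\pm|(|\tau|+\delta+r)}(Mr_0+1)\,\delta\delta_\star^{-1}.
\]
Letting $\delta\to0$ gives $\delta\delta_\star^{-1}\to\|\phi\|$ by continuity of $u\mapsto\|x^\star_u\|$. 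The result is
\[
\|T(t,\tau)\phi\|\le C\,e^{\,b^\pm(|\tau|-|t|)}\|\phi\|, \qquad C:=r_0^{-1}(Mr_0+1)e^{\tilde\beta(1+2r)}\max\{\|\mathcal{A}^+\|,\|\mathcal{A}^-\|\}.
\]
This holds first for $t\ge\tau+r$ (after $\delta\to0$). On the short window $t\in[\tau,\tau+r]$ I would instead bound $\|T(t,\tau)\phi\|$ by $\|y^\star_t\|$ plus the remaining pieces of $z^\star$. I expect this window to contribute the additive term $e^{\tilde\beta r}$, so that in all cases the constant is $C+e^{\tilde\beta r}$.

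\textbf{Step 3: the sign cases.} The choice of sign depends on where $\tau$ and $t$ lie.
\begin{itemize}
\item If $0\le\tau\le t$, use $b^+$: then $|\tau|-|t|=\tau-t$, which gives decay rate $b^+\ge\beta$.
\item If $\tau\le t\le0$, use $b^-$: then $b^-(|\tau|-|t|)=b^-(t-\tau)$, which gives rate $-b^-\ge\beta$.
\item If $\tau<0<t$, split through time $0$. We have $T(t,\tau)\phi=T(t,0)T(0,\tau)\phi$ and $T(0,\tau)\phi\in\mathcal{S}(0)$ by Lemma \ref{lm:invariance}. Applying the two previous cases in succession yields $(C+e^{\tilde\beta r})^2e^{-\beta(t-\tau)}\|\phi\|$.
\end{itemize}
This is the source of the square in $K_1$. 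Combining the cases with $\|\phi\|\le M_1\|\psi\|\le (M_1+1)\|\psi\|$ gives the stated $K_1$.

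\textbf{Main obstacle.} The delicate point is Step 2 near $t=\tau$. There the formula $y^\star=T(\cdot,\tau)\phi$ fails, and the limit $\delta\to0$ must be justified. I would handle it either by writing $T(t,\tau)\phi=y^\star_t-(z^\star-x^\star)_t$ and bounding the correction by $\|x^\star_t\|$ times an integral that vanishes as $\delta\to0$, or by applying Step 2 at the slightly shifted base point $\tau-r$ and absorbing the resulting factor $e^{\tilde\beta r}$.
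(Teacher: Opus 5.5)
Your argument is essentially the paper's. You reduce to $\phi=P(\tau)\psi\in\mathcal{S}(\tau)$ via Lemma \ref{lm:prj}, use $h^\star$, and get $y^\star_\tau=0$ from the direct sum. You bound $y^\star$ through $\|\mathcal{A}^\pm\|$ and \eqref{est:hstar}, with $\delta\delta_\star^{-1}\to\|\phi\|$. You treat the three sign configurations and obtain the square of $\tilde K$ by splitting at time $0$ in the mixed case.

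The one step you leave open, the window $t\in[\tau,\tau+r]$, arises only because you work with the segment identity $y^\star_t=T(t,\tau)\phi$, which is valid for $t\ge\tau+\delta+r$. The paper instead uses the pointwise identity you already record in Step 1. For fixed $s>\tau$, take $\delta<s-\tau$; then $|x^\star(s)|=|y^\star(s)|\le\|\mathcal{A}^\pm\|\,\|h^\star\|_{\mathcal{H}_{b^\pm}}e^{-b^\pm|s|}$. Letting $\delta\to0$ gives, in the relevant sign case, $|x^\star(s)|\le r_0^{-1}(Mr_0+1)e^{\tilde\beta(1+r)}\|\mathcal{A}^\pm\|\,e^{b^\pm(|\tau|-|s|)}\|\phi\|$ for every $s>\tau$. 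The norm $\|T(t,\tau)\phi\|=\sup_{\theta}|x^\star(t+\theta)|$ is then assembled directly:
\begin{itemize}
\item points with $t+\theta>\tau$ cost one more factor $e^{\tilde\beta r}$, which is where $e^{\tilde\beta(1+2r)}$ comes from;
\item points with $t+\theta\le\tau$ lie in the initial history, where $|x^\star(t+\theta)|\le\|\phi\|$ and necessarily $t-\tau\le r$; this gives the additive $e^{\tilde\beta r}$ in $\tilde K$.
\end{itemize}
No separate short-window argument is needed.

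Of your two proposed fixes, the second does not work. A general $\phi\in C([-r,0],\mathbb{R}^n)$ has no backward continuation, because the range of $T(\tau,\tau-r)$ consists of absolutely continuous solution segments. Even when a continuation exists, its norm is not controlled by $\|\phi\|$.

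The first fix works, but not for the reason you give. The correction is $(x^\star-z^\star)(s)=x^\star(s)\int_s^{+\infty}\delta_\star^{-1}\mathcal{G}_\star(u)\,du$. It equals $x^\star$ itself on $[t-r,\tau]$, and on $(\tau,t]$ it is at most $\sup_{[\tau,\tau+\delta]}|x^\star|\to|\phi(0)|$. So it does not vanish as $\delta\to0$, and an absorption argument based on a vanishing factor would be false. Bounding it by $\|\phi\|$ and using $t-\tau\le r$ gives exactly the additive term.

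Minor points:
\begin{itemize}
\item Your preliminary split on whether $T(t_0,\tau)\phi=0$ is unnecessary, since $h^\star$ only needs $\phi\neq0$. Read literally, it also leaves $t\in(\tau,t_0)$ uncovered.
\item In your bound for $\|h^\star\|_{\mathcal{H}_{b^\pm}}$ the exponent should be $b^\pm|\tau|+|b^\pm|(\delta+r)$. With $|b^\pm||\tau|$, the $b^-$ estimate would not decay.
\item Like the paper, you effectively take the supremum defining $\|h^\star\|_{\mathcal{H}_{b^\pm}}$ only over $s\in[\tau,\tau+\delta+r]$. Windows $[s,s+r_0]$ with $s\in(\tau-r_0,\tau)$ also meet the support of $h^\star$. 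For $b^+$ and small $\tau\ge0$, this can add a factor of up to $e^{b^+r_0}$ to the constant, though not to the exponent.
\end{itemize}
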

\begin{proof}
Fix any $\tau\in \mathbb{R}$.
By Lemma \ref{lm:prj}, this lemma holds for $t=\tau$.
For $t>\tau$, we take any nonzero function $\phi \in \mathcal{S}(\tau)$
and $\delta>0$ such that $0<\delta<\min\{t-\tau,\, 1,\, \delta_\phi\}$,
where $\delta_\phi$ satisfies the condition below \eqref{df:xstar}.
Recall that $x^\star$ and $h^\star$ are defined in \eqref{df:xstar} and \eqref{df:hstar}, respectively,
and $(h^\star,y^\star)$ is a properly admissible pair in $(\mathcal{H}_{b^\pm}(\mathbb{R}),\mathcal{Y}_{b^\pm}(\mathbb{R}))$.
We now divide the following proof into three distinct cases according to the signs of $t$ and $\tau$.

{\bf (C1)}
If $t>\tau \geq 0$, noting $y^{\star}\in \mathcal{Y}_{b^+}(\mathbb{R})$, by Lemma \ref{lm:bddoperator} and \eqref{est:hstar} we get
\[
\begin{aligned}
|y^{\star}(t)|
& \leq \|y^{\star}\|_{\mathcal{Y}_{b^+}(\mathbb{R})}e^{-b^+|t|} \\
& \leq \|\mathcal{A}^+\| \|h^{\star}\|_{\mathcal{H}_{b^+}(\mathbb{R})}e^{-b^+ |t|}\\
& \leq \|\mathcal{A}^+\|e^{-b^+ |t|}
  \sup_{s\in \mathbb{R}} \frac{1}{r_0}e^{b^+ |s|}\int_{s}^{s+r_0}|h^{\star}(u)| du \\
& \leq \|\mathcal{A}^+\|\sup_{\tau \leq s\leq \tau+\delta+r }
\frac{1}{r_0}e^{b^+(|s|-|t|)}\int_{s}^{s+r_0}|h^{\star}(u)| du\\
& \leq \frac{1}{r_0}\|\mathcal{A}^+\|(M r_0+1)e^{b^+(1+r)-b^+(t-\tau)}\delta \delta_\star^{-1},
\end{aligned}
\]
where we use $0<\delta<1$ in the last line.
Since $\phi\in\mathcal{S}(\tau)$, by \eqref{df:prj} we have
\[
\phi=P(\tau)\phi=y^\star_\tau+\phi.
\]
Then $y^\star_\tau=0$. This together with the second assertion of Lemma \ref{lm:imp}
yields that $y^\star(t)=x^\star(t)$ for all $t\geq \tau+\delta$,
where $x^\star$ is defined by \eqref{df:xstar}.
Accordingly, we have
\[
|x^\star(t)| \leq  r^{-1}_0\|\mathcal{A}^+\|(M r_0+1)e^{b^+(1+r)-b^+(t-\tau)}\delta \delta_\star^{-1},
\ \ \
t\geq \tau+\delta.
\]
Letting $\delta\to 0$ in the above yields that
\[
|x^\star(t)|
\leq  r^{-1}_0\|\mathcal{A}^+\|(M r_0+1)e^{b^+(1+r)-b^+(t-\tau)}\|\phi\|,
\ \ \
t>\tau\geq 0.
\]
Note that $|x^\star(t+\theta)|\leq \|\phi\|$ if $\theta\in [-r,0]$ and $t+\theta\leq \tau$.
Hence
\begin{eqnarray}\label{est:posit-1}
\|T(t,\tau)\phi\|=\|x^\star_t\|
\leq \left\{r^{-1}_0\|\mathcal{A}^+\|(M r_0+1)e^{b^+(1+2r)}+e^{b^+ r}\right\}  e^{-b^+(t-\tau)}\|\phi\|,
\ \  t> \tau \geq 0.
\end{eqnarray}

{\bf (C2)} If $0\geq t>\tau$, noting $y^{\star}\in \mathcal{Y}_{b^-}(\mathbb{R})$, by Lemma \ref{lm:bddoperator} and \eqref{est:hstar} we have
\[
\begin{aligned}
|y^{\star}(t)|
& \leq \|y^{\star}\|_{\mathcal{Y}_{b^-}(\mathbb{R})}e^{-b^-|t|} \\
& \leq \|\mathcal{A}^-\| \|h^{\star}\|_{\mathcal{H}_{b^-}(\mathbb{R})}e^{-b^- |t|}\\
& \leq \|\mathcal{A}^-\|e^{-b^-|t|}
  \sup_{s\in \mathbb{R}} \frac{1}{r_0}e^{b^- |s|}\int_{s}^{s+r_0}|h^{\star}(u)| du \\
& \leq \|\mathcal{A}^-\|\sup_{\tau \leq s\leq \tau+\delta+r }
\frac{1}{r_0}e^{b^-(|s|-|t|)}\int_{s}^{s+r_0}|h^{\star}(u)| du\\
& \leq \frac{1}{r_0}\|\mathcal{A}^-\|(M r_0+1)e^{b^-(|\tau|-\delta-r-|t|)}\delta \delta_\star^{-1} \\
& \leq \frac{1}{r_0}\|\mathcal{A}^-\|(M r_0+1)e^{-b^-(\delta+r)+b^-(t-\tau)}\delta \delta_\star^{-1}.
\end{aligned}
\]
Recalling $y^\star(t)=x^\star(t)$ for all $t\geq \tau+\delta$, we get
\[
|x^\star(t)| \leq  r^{-1}_0\|\mathcal{A}^-\|(M r_0+1)e^{-b^-(\delta+r)+b^-(t-\tau)}\delta \delta_\star^{-1},
\ \ \
t\geq \tau+\delta.
\]
Accordingly,
letting $\delta\to 0$ in the above yields that
\[
|x^\star(t)|
\leq  r^{-1}_0\|\mathcal{A}^-\|(M r_0+1)e^{-b^-(\delta+r)+b^-(t-\tau)}\|\phi\|,
\qquad 0\geq t>\tau.
\]
It follows that
\begin{eqnarray}\label{est:neg-1}
\|T(t,\tau)\phi\|=\|x^\star_t\|
\leq \left\{r^{-1}_0\|\mathcal{A}^-\|(M r_0+1)e^{-b^-(1+2r)}+e^{-b^- r}\right\}  e^{b^-(t-\tau)}\|\phi\|,
\ \  0\geq t>\tau.
\end{eqnarray}

{\bf (C3)} If $t>0>\tau$, by Lemma \ref{lm:invariance} we obtain $T(0,\tau)\phi \in \mathcal{S}(0)$.
Let
\begin{eqnarray}\label{df:tildK}
\tilde{K}:=r^{-1}_0(M r_0+1)e^{\tilde{\beta}(1+2r)}\, \max\left\{\|\mathcal{A}^+\|,\;
\|\mathcal{A}^-\|\right\}+e^{\tilde{\beta} r},
\end{eqnarray}
where $\tilde{\beta}=\max\{b^+,\,-b^-\}$ and $\beta=\min\{b^+,\,-b^-\}$.
By \eqref{est:posit-1} and \eqref{est:neg-1},
\begin{eqnarray}\label{est:mix-3}
\begin{aligned}
\|T(t,\tau)\phi\|
  & \leq \tilde K  e^{-b^+ t}\|T(0,\tau)\phi\| \\
  & \leq \tilde K e^{-b^+ t}\, \tilde K e^{-b^-\tau}\|\phi\|\\
  & \leq \tilde{K}^2 e^{-\beta (t-\tau)}\|\phi\|,
\ \ \ t>0>\tau.
\end{aligned}
\end{eqnarray}
Hence, using \eqref{est:posit-1},
\eqref{est:neg-1} and \eqref{est:mix-3} yields that
for any $\phi \in \mathcal{S}(\tau)$,
\[
\|T(t,\tau)\phi\|\leq \tilde{K}^2 e^{-\beta (t-\tau)}\|\phi\|,
\ \ \ t\geq \tau.
\]
Therefore, the proof is completed by Lemma \ref{lm:prj} and the above estimates.
\end{proof}

{\bf Step 4.} {\it Estimate the expansion rate along the unstable subspace.}

To obtain the expansion rate of solutions in the unstable subspace,
we introduce another special solution to the nonhomogeneous equation \eqref{DDEnonhomo}.
For any $\tau \in \mathbb{R}$ and any nonzero function $\phi \in \mathcal{U}(\tau)$,
there exists a continuous function $x^* : \mathbb{R} \to \mathbb{R}^n$ such that
$x^*_\tau=\phi$, $x^*_t=T(t,s)x^*_s$ for $t\geq s$
and $\sup_{t\leq \tau}e^{b^+|t|}\|x^*_t\|<+\infty$.
By Lemma \ref{lm:invariance}, this solution $x^*$ is uniquely determined by $x^*_\tau=\phi$.
By continuity, there exists $\delta^\ast_{\phi}>0$ such that $\|x^*_t\|\neq 0$ for all $t\in [\tau-\delta^\ast_{\phi},\tau]$.
Take any constant  $\delta\in (0,\min\{1,\delta^\ast_{\phi}\})$
and define $h^\ast:\mathbb{R}\to \mathbb{R}^{n}$ as
\begin{eqnarray}\label{df:hstar2}
h^{\ast}(t):=
\left\{
\begin{array}{ll}
 -\int^{0}_{-r} d_{\theta}\eta(t,\theta)\left(x^*(t+\theta)\int_{t+\theta}^{t}\frac{1}{\delta_\ast}\mathcal{G}_\ast(u) du\right)
-\frac{1}{\delta^\ast}x^{\ast}(t)\mathcal{G}_\ast(t),
   \ & \ t\in (-\infty,\tau],
   \\
 0,  \ & \ t\in (\tau,+\infty),
\end{array}
\right.
\end{eqnarray}
where
\[
\delta_\ast:=\int^{\tau}_{\tau-\delta}\frac{1}{\|x^{\ast}_u\|}du,
\ \ \ \ \
\mathcal{G}_\ast(t):=
\left\{
\begin{array}{ll}
\frac{1}{\|x^{\ast}_t\|}, \ & \ t\in [\tau-\delta,\tau],\\
0, \ & \ t\notin [\tau-\delta,\tau].
\end{array}
\right.
\]
Since $h^*(t)=0$ for all $t \in (-\infty, \tau-\delta-r) \cup (\tau,+\infty)$,
we have  $h^\ast\in \mathcal{H}_{b^+}(\mathbb{R})$.
By the proper admissibility of $(\mathcal{H}_{b^\pm}(\mathbb{R}),\mathcal{Y}_{b^\pm}(\mathbb{R}))$ and Lemma \ref{lm:uniq},
there exists $y^\ast \in \mathcal{Y}_{b^+}(\mathbb{R})$ such that
$(h^\ast,y^\ast)$ is a properly admissible pair with respect to equation \eqref{DDEnonhomo}
in both
$(\mathcal{H}_{b^+}(\mathbb{R}),\mathcal{Y}_{b^+}(\mathbb{R}))$ and $(\mathcal{H}_{b^-}(\mathbb{R}),\mathcal{Y}_{b^-}(\mathbb{R}))$.

\begin{lemma}\label{lm:imp2}
$y^\ast(t)=x^*(t)$ for all $t \leq \tau-\delta$
and any nonzero $\phi\in\mathcal{U}(\tau)$.
\end{lemma}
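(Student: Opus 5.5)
The plan mirrors the proof of Lemma \ref{lm:imp}, run backward in time. I will define
\[
z^\ast(t):=x^\ast(t)\int_{t}^{+\infty}\frac{1}{\delta_\ast}\mathcal{G}_\ast(u)\,du,\qquad t\in\mathbb{R},
\]
so that $z^\ast(t)=x^\ast(t)$ for $t\le\tau-\delta$ and $z^\ast(t)=0$ for $t\ge\tau$. I then set $w^\ast:=y^\ast-z^\ast$.

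\textbf{Step 1: $w^\ast$ solves \eqref{DDE} on all of $\mathbb{R}$.} Fix $s\le t$ and compute $\int_s^t L(\sigma)w^\ast_\sigma\,d\sigma$. Using $\dot y^\ast=L(\sigma)y^\ast_\sigma+h^\ast(\sigma)$, write
\[
\int_{\sigma}^{+\infty}=\int_{\sigma+\theta}^{+\infty}-\int_{\sigma+\theta}^{\sigma}
\]
inside $L(\sigma)z^\ast_\sigma$. The minus sign in \eqref{df:hstar2} is chosen so that the $\int_{\sigma+\theta}^{\sigma}$ pieces cancel the first part of $h^\ast$. What remains is
\[
\int_s^t L(\sigma)x^\ast_\sigma\int_\sigma^{+\infty}\tfrac{1}{\delta_\ast}\mathcal{G}_\ast\,du\,d\sigma
-\int_s^t\tfrac{1}{\delta_\ast}x^\ast(\sigma)\mathcal{G}_\ast(\sigma)\,d\sigma .
\]
I will integrate the first term by parts, using $\int_s^\sigma L(u)x^\ast_u\,du=x^\ast(\sigma)-x^\ast(s)$, which holds because $x^\ast$ solves \eqref{DDE} on all of $\mathbb{R}$. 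The boundary and $\mathcal{G}_\ast$ terms then combine to $z^\ast(t)-z^\ast(s)$, giving $w^\ast(t)-w^\ast(s)=\int_s^tL(\sigma)w^\ast_\sigma\,d\sigma$. (I read the typo $\delta^\ast$ in \eqref{df:hstar2} as $\delta_\ast$.) For $t>\tau$ we have $h^\ast=0$ and $z^\ast=0$, which is consistent.

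\textbf{Step 2: growth bounds on $w^\ast$.} I will show $w^\ast$ satisfies the growth conditions needed to apply the proper admissibility of $(\mathcal{H}_{b^-}(\mathbb{R}),\mathcal{Y}_{b^-}(\mathbb{R}))$ with $h=0$. On $t\ge\tau$, $w^\ast=y^\ast\in\mathcal{Y}_{b^+}(\mathbb{R})$. On $t\le\tau-\delta$, $w^\ast=y^\ast-x^\ast$. Here $y^\ast\in\mathcal{Y}_{b^+}(\mathbb{R})$, and $\sup_{t\le\tau}e^{b^+|t|}\|x^\ast_t\|<\infty$ by the choice of $x^\ast$ (as $\phi\in\mathcal{U}(\tau)=\mathcal{U}^+(\tau)$). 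On the compact interval $[\tau-\delta,\tau]$, $w^\ast$ is continuous. Hence $w^\ast\in\mathcal{Y}_{b^+}(\mathbb{R})\subset\mathcal{Y}_{b^-}(\mathbb{R})$, and $w^\ast$ is a solution of \eqref{DDEnonhomo} with $h=0\in\mathcal{H}_{b^-}(\mathbb{R})$.

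\textbf{Step 3: conclusion.} Uniqueness in the proper admissibility forces $w^\ast=0$. Therefore $y^\ast=z^\ast$, and in particular $y^\ast(t)=x^\ast(t)$ for $t\le\tau-\delta$.

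The main obstacle is Step 1: the bookkeeping of the Stieltjes integrals and the integration by parts must produce exactly $z^\ast(t)-z^\ast(s)$ with the correct signs. I will handle this by copying the computation of Lemma \ref{lm:imp} line by line, with $\int_{-\infty}^{\sigma}$ replaced by $-\int_{\sigma}^{+\infty}$ up to the constant $1$. Indeed, $\int_{-\infty}^{\sigma}+\int_{\sigma}^{+\infty}=1$, and constant multiples of $x^\ast$ are themselves solutions, which is an alternative shortcut: $z^\ast=x^\ast-\tilde z$ with $\tilde z$ exactly of the form in Lemma \ref{lm:imp}.
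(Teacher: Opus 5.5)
Your plan is the paper's own argument: the same $z^\ast$, the same $w^\ast=y^\ast-z^\ast$, show that $w^\ast$ solves \eqref{DDE}, place it in $\mathcal{Y}_{b^+}(\mathbb{R})$, and invoke uniqueness. There is, however, a genuine gap, and it sits in your sentence ``for $t>\tau$ we have $h^\ast=0$ and $z^\ast=0$, which is consistent.''

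The function $z^\ast$ vanishes pointwise on $[\tau,+\infty)$. But $L(t)$ acts on the segment $z^\ast_t$, and for $t\in(\tau,\tau+r)$ that segment reaches back into $[\tau-\delta,\tau)$. There $z^\ast(s)=x^\ast(s)\int_s^{+\infty}\delta_\ast^{-1}\mathcal{G}_\ast(u)\,du\neq 0$. Since $h^\ast$ was set to $0$ on $(\tau,+\infty)$, you get
\[
\dot w^\ast(t)-L(t)w^\ast_t=L(t)z^\ast_t,\qquad t\in(\tau,\tau+r).
\]
This is generically nonzero; for $L(t)\psi=a\psi(-r)$ it equals $a\,z^\ast(t-r)$. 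So $w^\ast$ is not a solution of \eqref{DDE} on $\mathbb{R}$, and Step 3 cannot be applied.

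The mirror of Lemma \ref{lm:imp} breaks because the delay always carries the effect of the cutoff \emph{forward} in time. The forcing that $z^\ast$ really produces, $\dot z^\ast-L(t)z^\ast_t$, is the formula in \eqref{df:hstar2} evaluated at every $t$. It is supported in $[\tau-\delta,\tau+r]$, not in $(-\infty,\tau]$. In Lemma \ref{lm:imp} the analogous support $[\tau,\tau+\delta+r]$ lies inside the region where $h^\star$ is kept, which is why that proof is fine. Your suggested shortcut $z^\ast=x^\ast-\tilde z$ would have exposed this: $\tilde z$ generates a forcing on $[\tau-\delta,\tau+r]$.

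The paper's proof has the same hole. It only evaluates $\int_t^\tau L(\sigma)w^\ast_\sigma\,d\sigma$, so it verifies the equation on $(-\infty,\tau]$ only, and then applies uniqueness on all of $\mathbb{R}$. With $h^\ast$ truncated as in \eqref{df:hstar2}, the statement can actually fail. Define $g(t):=-L(t)z^\ast_t$ for $t\in(\tau,\tau+r)$ and $g(t):=0$ otherwise; then $y^\ast=z^\ast-\mathcal{A}^{\pm}g$. Now take $\dot x=ax(t-r)$ with $a>0$, $ar<1$, $b^\pm$ near $0$, and $\phi(\theta)=e^{\lambda_0\theta}$, where $\lambda_0>0$ is the real characteristic root. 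Then $g<0$ on $(\tau,\tau+r)$. The Green's function representation shows $\mathcal{A}^{\pm}g$ is a nonzero multiple of $e^{\lambda_0 t}$ on $(-\infty,\tau]$. Hence $y^\ast\neq x^\ast$ on $(-\infty,\tau-\delta]$.

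The repair is to drop the truncation: set $h^\ast:=\dot z^\ast-L(t)z^\ast_t$, i.e.\ use the formula of \eqref{df:hstar2} for all $t\in\mathbb{R}$. This $h^\ast$ has the following properties:
\begin{itemize}
\item it vanishes outside $[\tau-\delta,\tau+r]$, so it lies in $\mathcal{H}_{b^\pm}(\mathbb{R})$;
\item it still satisfies \eqref{est:hstar2}, by the same argument as \eqref{est:hstar}.
\end{itemize}
Then $z^\ast\in\mathcal{Y}_{b^+}(\mathbb{R})$ solves \eqref{DDEnonhomo} with this $h^\ast$, your Steps 1--3 go through on the whole line, and $y^\ast=z^\ast$. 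The later estimates in Lemma \ref{lm:est:unstab} only need the support interval adjusted.
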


\begin{proof}
Define $z^\ast(t):\mathbb{R}\to \mathbb{R}^{n}$ as
\begin{eqnarray}\label{df:zstar2}
z^\ast(t):=x^*(t)\int^{+\infty}_{t} \frac{1}{\delta_\ast} \mathcal{G}_\ast(u) du,
\end{eqnarray}
and let $w^\ast:=y^\ast-z^\ast$.
Since $y^\ast$ is a solution of equation \eqref{DDEnonhomo}, we have
\begin{eqnarray*}
\int^{\tau}_{t}L(\sigma)w^\ast_\sigma \, d\sigma
\!\!\!&=&\!\!\! \int^{\tau}_{t}L(\sigma)y^\ast_\sigma d\sigma-\int^{\tau}_{t}L(\sigma) z^\ast_\sigma d\sigma \\
\!\!\!&=&\!\!\! y^\ast(\tau)-y^\ast(t)-\int^{\tau}_{t}h^\ast(\sigma)d\sigma-\int^{\tau}_{t}L(\sigma) z^\ast_\sigma d\sigma \\
\!\!\!&=&\!\!\! y^\ast(\tau)-y^\ast(t)+\int^{\tau}_{t} \int^{0}_{-r} d_{\theta}\eta(\sigma,\theta) \left( x^*(\sigma+\theta)\int_{\sigma+\theta}^{\sigma}\frac{1}{\delta_\ast}\mathcal{G}_\ast(u) du \right) d\sigma \\
\!\!\!& &\!\!\! - \int^{\tau}_{t}\int^{0}_{-r} d_{\theta}\eta(\sigma,\theta) \left( x^*(\sigma+\theta)\int^{+\infty}_{\sigma+\theta} \frac{1}{\delta_\ast} \mathcal{G}_\ast(u) du \right) d\sigma
         +\int^{\tau}_{t} \frac{1}{\delta_\ast}x^{\ast}(\sigma)\mathcal{G}_\ast(\sigma) d\sigma \\
\!\!\!&=&\!\!\! y^\ast(\tau)-y^\ast(t)-\int^{\tau}_{t}L(\sigma)x^*_\sigma\int^{+\infty}_{\sigma} \frac{1}{\delta_\ast} \mathcal{G}_\ast(u) du d\sigma
        +\int^{\tau}_{t} \frac{1}{\delta_\ast}x^{\ast}(\sigma)\mathcal{G}_\ast(\sigma) d\sigma.
\end{eqnarray*}
To simplify the last line,
using integration by parts, we get
\begin{eqnarray*}
&&-\int^{\tau}_{t}L(\sigma)x^*_\sigma \int^{+\infty}_{\sigma} \frac{1}{\delta_\ast} \mathcal{G}_\ast(u) du d\sigma   \\
&&= \left(\int^{\tau}_{\sigma} L(u)x^*_u du \int^{+\infty}_{\sigma} \frac{1}{\delta_\ast} \mathcal{G}_\ast(u) du  \right) \Big|^{\sigma=\tau}_{\sigma=t}
+\int^{\tau}_{t}\left(\int^{\tau}_{\sigma} L(u)x^*_u du\right) \frac{1}{\delta_\ast} \mathcal{G}_\ast(\sigma) d\sigma \\
&&= (x^*(t)-x^*(\tau)) \int^{+\infty}_{t} \frac{1}{\delta_\ast} \mathcal{G}_\ast(u) d u
    +\int^{\tau}_{t}\left(x^*(\tau)-x^*(\sigma) \right) \frac{1}{\delta_\ast} \mathcal{G}_\ast(\sigma) d\sigma.
\end{eqnarray*}
Accordingly, we obtain
\begin{eqnarray*}
\int^{\tau}_{t}L(\sigma)w^\ast_\sigma \, d\sigma
\!\!\! &=&\!\!\!
y^\ast(\tau)-y^\ast(t)+(x^*(t)-x^*(\tau)) \int^{+\infty}_{t} \frac{1}{\delta_\ast} \mathcal{G}_{\ast}(u) d u
     + x^*(\tau)\int^{\tau}_{t}  \frac{1}{\delta_\ast}\mathcal{G}_{\ast}(u)  d u\\
\!\!\! &=&\!\!\!
y^\ast(\tau)-y^\ast(t)+x^*(t)\int^{+\infty}_{t} \frac{1}{\delta_\ast}\mathcal{G}_{\ast}(u) du
-x^*(\tau)\int^{+\infty}_{\tau}  \frac{1}{\delta_\ast}\mathcal{G}_{\ast}(u) d u \\
\!\!\! &=&\!\!\!
y^\ast(\tau)-y^\ast(t)+z^\ast(t)-z^\ast(\tau)\\
\!\!\! &=&\!\!\!
w^\ast(\tau)-w^\ast(t),
\end{eqnarray*}
which implies that $w^\ast$ is a solution of equation \eqref{DDE}.
Note that $z^\ast(t)=x^*(t)$ for all $t\leq \tau-\delta$ and $z^\ast(t)=0$ for all $t\geq \tau$.
Recalling that $w^\ast=y^\ast-z^\ast$,
$y^*\in \mathcal{Y}_{b^+}(\mathbb{R})$ and $\sup_{t\leq \tau}e^{b^+|t|}\|x^*_t\|<+\infty$, we get $w^* \in \mathcal{Y}_{b^+}(\mathbb{R})$.
The proper admissibility of the pair $(0,w^\ast)$ implies $w^\ast=0$.
Then  $y^\ast(t)=z^\ast(t)=x^*(t)$ for all $t\leq \tau-\delta$ by the definition of $z^\ast$ in \eqref{df:zstar2}.
This completes the proof.
\end{proof}

\begin{lemma}\label{lm:est:unstab}
For each $\psi \in C([-r,0],\mathbb{R}^n)$,
\[
\|T(t,\tau)(Id-P(\tau))\psi\|\leq K_1 e^{-\beta(\tau-t)}\|\psi\|,
\ \ \ t\leq \tau,
\]
where $K_1$ and $\beta$ are defined as in Lemma \ref{lm:est:stab}.
\end{lemma}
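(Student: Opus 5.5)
We mirror the proof of Lemma \ref{lm:est:stab}, now using the pair $(h^\ast,y^\ast)$ and Lemma \ref{lm:imp2} in place of $(h^\star,y^\star)$ and Lemma \ref{lm:imp}. Fix $\tau$ and $t<\tau$ (the case $t=\tau$ follows from Lemma \ref{lm:prj}, since $\|Id-P(\tau)\|\le M_1+1$). Take a nonzero $\phi\in\mathcal{U}(\tau)$ and let $x^*$ be the unique backward extension with $x^*_\tau=\phi$. Choose $\delta\in(0,\min\{\tau-t,1,\delta^\ast_\phi\})$. By Lemma \ref{lm:imp2}, $x^*(s)=y^\ast(s)$ for $s\le\tau-\delta$, so $|x^*(t)|=|y^\ast(t)|$.

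\textbf{Bounding $h^\ast$.} Exactly as in \eqref{est:hstar}, we use that $u+\theta\le\sigma\le u$ gives $|x^*(u+\theta)|\le\|x^*_\sigma\|$. This yields
\[
\int_s^{s+r_0}|h^\ast(u)|\,du\le \delta\delta_\ast^{-1}(Mr_0+1),
\]
and $h^\ast$ is supported in $[\tau-\delta-r,\tau]$.

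\textbf{Case $t<\tau\le0$.} Use $y^\ast\in\mathcal{Y}_{b^-}(\mathbb{R})$ and $|y^\ast(t)|\le\|\mathcal{A}^-\|\,\|h^\ast\|_{\mathcal{H}_{b^-}}e^{-b^-|t|}$. In the weighted norm only $s\in[\tau-\delta-r-r_0,\tau]$ contributes. The factor is then
\[
e^{b^-|s|-b^-|t|}=e^{-b^-(|t|-|s|)}\le e^{-b^-(1+r+r_0)}\,e^{b^-(\tau-t)}.
\]
This uses $b^-<0$, $|t|-|s|\ge(\tau-t)-(\delta+r+r_0)$, and $-b^-\,|t|$ growing, so the sign works in our favour and gives decay $e^{b^-(\tau-t)}$ as $t$ decreases. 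Letting $\delta\to0$ gives $\delta\delta_\ast^{-1}\to\|\phi\|$, hence $|x^*(t)|\le C e^{-(-b^-)(\tau-t)}\|\phi\|$. Taking the sup over $\theta\in[-r,0]$ shifts $t$ by at most $r$, and for $t+\theta>\tau-\delta$ we bound trivially by $\|\phi\|$ up to $e^{\tilde\beta r}$. This gives $\|T(t,\tau)\phi\|=\|x^*_t\|\le\tilde K e^{b^-(\tau-t)}\|\phi\|$.

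\textbf{Case $0\le t<\tau$.} Use $\mathcal{Y}_{b^+}$ and $\mathcal{A}^+$. Now $e^{b^+(|s|-|t|)}\le e^{b^+}e^{-b^+(t-\tau)}$ up to an $r_0$-window constant, since $s\le\tau$ lies near $\tau$ and $|s|-|t|\approx \tau-t$. This looks like growth rather than decay, so I must be careful. The weight in $\mathcal{H}_{b^+}$ is $e^{b^+|s|}$, large at $s\approx\tau$, while the output weight is $e^{-b^+|t|}$, so $|y^\ast(t)|\lesssim e^{b^+(\tau-t)}$, which is growth. The right estimate here should come from $\mathcal{Y}_{b^-}$ instead, whose factor $e^{b^-(|s|-|t|)}=e^{b^-(\tau-t)}$ decays. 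So in this case I will use $\mathcal{A}^-$ as well. Rechecking the first case: for $t<\tau\le0$, $|t|>|s|$, so $e^{b^+(|s|-|t|)}\le e^{-b^+(\tau-t)}$ and $\mathcal{A}^+$ is the right operator there.

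So the correct assignment is reversed relative to Lemma \ref{lm:est:stab}: $\mathcal{A}^+$ for $t<\tau\le0$, giving rate $b^+$, and $\mathcal{A}^-$ for $0\le t<\tau$, giving rate $-b^-$. Both give rate at least $\beta$, with constant $\tilde K$ from \eqref{df:tildK}.

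\textbf{Mixed case $t<0<\tau$.} By Lemma \ref{lm:invariance}, $T(0,\tau)|_{\mathcal{U}}$ is invertible; write $\psi_0=T(0,\tau)^{-1}\phi\in\mathcal{U}(0)$. Compose the two previous estimates (from $\tau$ to $0$, then from $0$ to $t$) to get the constant $\tilde K^2$ and rate $\beta$.

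Finally, apply the bound to $\phi=(Id-P(\tau))\psi$ and use $\|Id-P(\tau)\|\le M_1+1$, which gives $K_1=\tilde K^2(M_1+1)$. The main obstacle is the sign bookkeeping of $|s|-|t|$ in each case. The other delicate point is checking that the $\theta$-shift and the region $t+\theta>\tau-\delta$ are absorbed into the $e^{\tilde\beta r}$ term.
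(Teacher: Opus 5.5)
This is essentially the paper's proof: the same bound $\int_s^{s+r_0}|h^\ast(u)|\,du\le (Mr_0+1)\delta\delta_\ast^{-1}$, the identification $x^*=y^\ast$ on $(-\infty,\tau-\delta]$ via Lemma \ref{lm:imp2}, and your corrected case assignment, namely $\mathcal{A}^-$ for $0\le t<\tau$ and $\mathcal{A}^+$ for $t<\tau\le 0$, which are exactly the paper's cases (C1') and (C2'); the proof then composes through time $0$ when $t<0<\tau$ and ends with $\|Id-P(\tau)\|\le M_1+1$. When writing it up, delete your first paragraph on $t<\tau\le 0$: its bound $e^{-b^-(|t|-|s|)}\le e^{-b^-(1+r+r_0)}e^{b^-(\tau-t)}$ is false, because $-b^->0$ and $|t|-|s|$ grows like $\tau-t$. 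Also, in the mixed case write $\psi_0=T(0,\tau)\phi=(T(\tau,0)|_{\mathcal{U}(0)})^{-1}\phi$ instead of $T(0,\tau)^{-1}\phi$.
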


\begin{proof}
Fix any $\tau\in \mathbb{R}$.
By Lemma \ref{lm:prj}, this lemma holds for $t=\tau$.
For each $t<\tau$,
we take any nonzero function $\phi \in \mathcal{U}(\tau)$
and any $\delta>0$ such that $0<\delta<\min\{\tau-t,\, 1,\, \delta^*_\phi\}$,
where $\delta^*_\phi$ satisfies the condition before \eqref{df:hstar2}.
Let the notations be defined as in the proof of Lemma \ref{lm:imp2}.
By a similar argument as in the proof of \eqref{est:hstar},
we have
\begin{eqnarray}\label{est:hstar2}
\begin{aligned}
\int_{s}^{s+r_0}|h^*(u)| du
\leq (Mr_0+1)\delta \delta_*^{-1}.
\end{aligned}
\end{eqnarray}
According to the signs of $t$ and $\tau$,
we consider three distinct cases as follows.

{\bf (C1')}
If $0\leq t<\tau$ then, noting  $y^*\in \mathcal{Y}_{b^-}(\mathbb{R})$, by Lemma \ref{lm:bddoperator} and \eqref{est:hstar2}
we have that for any $\theta\in [-r,0]$,
\[
\begin{aligned}
|y^*(t+\theta)|
& \leq \|y^*\|_{\mathcal{Y}_{b^-}(\mathbb{R})}e^{-b^-(|t|+r)} \\
& \leq \|\mathcal{A}^-\| \|h^*\|_{\mathcal{H}_{b^-}(\mathbb{R})}e^{-b^- (|t|+r)}\\
& \leq \|\mathcal{A}^-\|e^{-b^-(|t|+r)}
  \sup_{s\in \mathbb{R}} \frac{1}{r_0}e^{b^- |s|}\int_{s}^{s+r_0}|h^*(u)| du \\
& \leq \|\mathcal{A}^-\|e^{-b^-r}\sup_{\tau-\delta-r \leq s\leq \tau}
\frac{1}{r_0}e^{b^-(|s|-|t|)}\int_{s}^{s+r_0}|h^*(u)| du\\
& \leq \frac{1}{r_0}\|\mathcal{A}^-\|(M r_0+1)e^{-b^- (1+2r)+b^-(\tau-t)}\delta \delta_*^{-1},
\end{aligned}
\]
where we use $0<\delta<1$ in the last line.
By Lemma \ref{lm:imp2}, we have that
\[
\|x^*_t\|
\leq \frac{1}{r_0}\|\mathcal{A}^-\|(M r_0+1)e^{-b^- (1+2r) + b^-(\tau-t)}\delta \delta_*^{-1},
\ \ \ t\leq \tau-\delta.
\]
Letting $\delta\to 0$ in the above, by Lemma \ref{lm:invariance} we see
\begin{eqnarray}\label{est:posit2}
\|T(t,\tau)\phi\|=\|x^*_t\|
\leq  r^{-1}_0 \|\mathcal{A}^-\|(M r_0+1)e^{-b^- (1+2r) + b^-(\tau-t)} \|\phi\|,
\ \ \ 0\leq t<\tau.
\end{eqnarray}

{\bf (C2')} If $t<\tau \leq 0$, noting  $y^*\in \mathcal{Y}_{b^+}(\mathbb{R})$, by Lemma \ref{lm:bddoperator} and \eqref{est:hstar2} we get that for any $\theta\in [-r,0]$,
\[
\begin{aligned}
|y^*(t+\theta)|
& \leq \|y^*\|_{\mathcal{Y}_{b^+}(\mathbb{R})}e^{-b^+(|t|-r)} \\
& \leq \|\mathcal{A}^+\| \|h^*\|_{\mathcal{H}_{b^+}(\mathbb{R})}e^{-b^+ (|t|-r)}\\
& \leq \|\mathcal{A}^+\|e^{-b^+(|t|-r)}
  \sup_{s\in \mathbb{R}} \frac{1}{r_0}e^{b^+ |s|}\int_{s}^{s+r_0}|h^*(u)| du \\
& \leq \|\mathcal{A}^+\|e^{b^+ r}\sup_{\tau-\delta-r\leq s\leq \tau}
\frac{1}{r_0}e^{b^+(|s|-|t|)}\int_{s}^{s+r_0}|h^*(u)| du\\
& \leq \frac{1}{r_0}\|\mathcal{A}^+\|(M r_0+1)e^{b^+(|\tau|+\delta+2r-|t|)}\delta \delta_*^{-1} \\
& \leq \frac{1}{r_0}\|\mathcal{A}^+\|(M r_0+1)e^{b^+(1+2r)-b^+(\tau-t)}\delta \delta_*^{-1}.
\end{aligned}
\]
Using a similar argument to that in the proof of \eqref{est:posit2},
we have
\begin{eqnarray}\label{est:neg-2}
\|T(t,\tau)\phi\|
  \leq r_0^{-1} \|\mathcal{A}^+\|(M r_0+1)e^{b^+(1+2r)-b^+(\tau-t)}\|\phi\|,
  \ \ \ t<\tau\leq 0.
\end{eqnarray}

{\bf (C3')} If $t<0<\tau$, by Lemma \ref{lm:invariance} we have $T(0,\tau)\phi \in \mathcal{U}(0)$.
Recall that $\tilde{K}$ is defined by \eqref{df:tildK}.
Using \eqref{est:posit2} and \eqref{est:neg-2} yields that
\begin{eqnarray}\label{est:mix-2}
\begin{aligned}
\|T(t,\tau)\phi\|
  &=\|T(t,0)T(0,\tau)\phi\| \\
  &\leq \tilde K  e^{b^+ t}\|T(0,\tau)\phi\| \\
  &\leq \tilde K e^{b^+ t}\cdot \tilde K e^{b^-\tau}\|\phi\| \\
  &\leq \tilde{K}^2 e^{-\beta (\tau-t)}\|\phi\|
\end{aligned}
\end{eqnarray}
for $t<0<\tau$. Hence, by \eqref{est:posit2}, \eqref{est:neg-2} and \eqref{est:mix-2}
we see that for any $\phi \in \mathcal{U}(\tau)$,
\[
\|T(t,\tau)\phi\|\leq \tilde{K}^2 e^{-\beta (t-\tau)}\|\phi\|,
\ \ \  t \leq \tau.
\]
Therefore, the proof is completed by Lemma \ref{lm:prj} and the above estimates.	
\end{proof}

Finally, we proceed to the proof of Proposition \ref{prop-adm-to-NED}.

\begin{proof}[\itshape\bfseries Proof of Proposition \ref{prop-adm-to-NED}]
For each \( t \in \mathbb{R} \), let \( P(t) \) be defined as in Lemma~\ref{lm:prj}.
By Lemma~\ref{lm:invariance}, the evolution family \( \{ T(t,s) : t \ge s \} \) on \( C([-r,0], \mathbb{R}^n) \) satisfies {\bf (E1)} and {\bf (E2)}.
Let \( \beta \) and \( K_1 \) be the constants defined as in Lemma~\ref{lm:est:stab}.
In view of Lemmas~\ref{lm:est:stab} and~\ref{lm:est:unstab},
we get that {\bf (E3)} also holds with \( \alpha = \beta \) and \( K = K_1 \).
Consequently, equation \eqref{DDE} admits an exponential dichotomy with exponent \( \beta \) and bound \( K_1 \).
This completes the proof.
\end{proof}


\section{Robustness via the admissibility property}\
\label{sec:pfs}

In this section,
we give the main results on the robustness of exponential dichotomies against small-delay perturbations.
Therefore, we assume that delay equation \eqref{DDE} satisfies {\bf (A4)}.
The presence of delays transforms the phase space from $\mathbb{R}^{n}$ into $C([-r,0], \mathbb{R}^n)$,
which causes that the analysis framework developed in \cite{BV-20b,Elorreaga-Gomez-25}, where perturbations preserve the phase space, are now inapplicable.
We address this issue through the admissible characterization established in the preceding and an operator perturbation argument.


\subsection{Operator perturbation}\

For any fixed $b\in \mathbb{R}$,
define the linear operator $\mathcal{T}$ on the domain
$
\mathcal{D}(\mathcal{T}):=\Big\{ y\in \mathcal{Y}_{b}(\mathbb{R}):\;  \dot y(t)-A(t)y(t) \in \mathcal{H}_{b}(\mathbb{R})
 \Big\}
$
as
\begin{eqnarray*}
(\mathcal{T}y)(t):=\dot y(t)-A(t)y(t), \qquad t\in \mathbb{R}.
\end{eqnarray*}
This operator is the one associated with ODE \eqref{eq:ODE}.
Accordingly,
for its small-delay perturbation, i.e., equation \eqref{DDE} having small delay and satisfying {\bf (A4)},
we define the linear operator $\tilde{\mathcal{T}}$
on the domain
$
\mathcal{D}(\tilde{\mathcal{T}})
:=\Big\{ y\in \mathcal{Y}_{b}(\mathbb{R}):\;  \dot y(t)-L(t)y_{t}\in \mathcal{H}_{b}(\mathbb{R}) \Big\}
$
as
\begin{eqnarray*}
(\tilde{\mathcal{T}}y)(t):=\dot y(t)-L(t)y_{t}, \qquad t\in \mathbb{R}.
\end{eqnarray*}
Using the same argument as in page 108 of \cite{CL-99} or pp.~22-23 of \cite{Coppel-78},
we see that  $\mathcal{T}:\mathcal{D}(\mathcal{T})\subset \mathcal{Y}_{b}(\mathbb{R})\to \mathcal{H}_{b}(\mathbb{R})$
and $\tilde{\mathcal{T}}:\mathcal{D}(\tilde{\mathcal{T}})\subset \mathcal{Y}_{b}(\mathbb{R})\to \mathcal{H}_{b}(\mathbb{R})$ are closed.

We now study the properties of $\tilde{\mathcal{T}}$ via an operator perturbation approach.
Precisely, following the method used by Hale and Verduyn Lunel \cite[Section 5.4]{Hale-Lunel-01},
we write $(\tilde{\mathcal{T}}y)(t)$ as
\begin{eqnarray}\label{df:T-pert}
(\tilde{\mathcal{T}}y)(t)=(\mathcal{T}y)(t)+(\Lambda y)(t),
\end{eqnarray}
where
\begin{eqnarray*}
(\Lambda y)(t)=A(t)y(t)-\int^{0}_{-r} d_\theta\eta(t,\theta)y(t+\theta).
\end{eqnarray*}
We regard $\mathcal{T}$ as the unperturbed operator and $\Lambda$ the perturbation operator.
For each $y\in \mathcal{D}(\tilde{\mathcal{T}})$, using integration by parts, we further transform $(\Lambda y)(t)$ into the following:
\begin{eqnarray}\label{df:pert}
\begin{aligned}
(\Lambda y)(t)
 &=A(t)y(t)-\int^{0}_{-r} d_\theta\eta(t,\theta)y(t+\theta)\\
 &=A(t)y(t)-A(t)y(t-r)+\int^{0}_{-r} \eta(t,\theta)\dot y(t+\theta)\,d\theta\\
 &=A(t)\int_{t-r}^{t}\dot y(\theta)\,d\theta+\int^{t}_{t-r} \eta(t,\theta-t)\dot y(\theta)\,d\theta.
\end{aligned}
\end{eqnarray}
This formula will play an important role in the subsequent discussion.

We first show that the domain $\mathcal{D}(\mathcal{T})$ is invariant under the perturbation $\Lambda$.

\begin{lemma}\label{lm:domain-T-Tr}
Suppose that equation \eqref{DDE} satisfies {\bf (A4)}.
Then $\mathcal{D}(\tilde{\mathcal{T}})=\mathcal{D}(\mathcal{T})$ for any $b\in \mathbb{R}$.
\end{lemma}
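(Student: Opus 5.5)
Since $\tilde{\mathcal{T}}y=\mathcal{T}y+\Lambda y$ by \eqref{df:T-pert}, and both domains consist of $y\in\mathcal{Y}_b(\mathbb{R})$ (locally absolutely continuous, so that $\dot y$ exists a.e.), it suffices to show: for $y\in\mathcal{Y}_b(\mathbb{R})$, if either $\mathcal{T}y\in\mathcal{H}_b(\mathbb{R})$ or $\tilde{\mathcal{T}}y\in\mathcal{H}_b(\mathbb{R})$, then $\Lambda y\in\mathcal{H}_b(\mathbb{R})$. We will not use \eqref{df:pert}, which involves $\dot y$ and would force a control of $\dot y$ in terms of itself. Instead we bound $\Lambda y$ directly from its definition. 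From $\|L(t)\|\le$ the total variation of $\eta(t,\cdot)$, and from $A(t)=-\eta(t,-r)$, we get $\|A(t)\|\le \|L(t)\|$; also $|A(t)y(t)|\le\|A(t)\|\,|y(t)|$ and $|L(t)y_t|\le\|L(t)\|\,\|y_t\|$. Hence
\[
|(\Lambda y)(t)|\le 2\|L(t)\|\,\|y_t\| .
\]

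Using $y\in\mathcal{Y}_b(\mathbb{R})$, for every $u$ we have $\|y_u\|\le e^{|b|r}e^{-b|u|}\|y\|_{\mathcal{Y}_b(\mathbb{R})}$. For $u\in[t,t+r_0]$ we also have $e^{b|t|}e^{-b|u|}\le e^{|b|r_0}$. Therefore
\[
e^{b|t|}\int_t^{t+r_0}|(\Lambda y)(u)|\,du\le 2e^{|b|(r+r_0)}\|y\|_{\mathcal{Y}_b(\mathbb{R})}\int_t^{t+r_0}\|L(u)\|\,du\le 2Mr_0e^{|b|(r+r_0)}\|y\|_{\mathcal{Y}_b(\mathbb{R})}
\]
by \eqref{hyp-int}. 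Thus $\Lambda$ maps $\mathcal{Y}_b(\mathbb{R})$ boundedly into $\mathcal{H}_b(\mathbb{R})$. Measurability and local integrability follow from {\bf (A3)}.

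Now take $y\in\mathcal{D}(\mathcal{T})$. Then $\tilde{\mathcal{T}}y=\mathcal{T}y+\Lambda y\in\mathcal{H}_b(\mathbb{R})$, so $\mathcal{D}(\mathcal{T})\subset\mathcal{D}(\tilde{\mathcal{T}})$. Conversely, for $y\in\mathcal{D}(\tilde{\mathcal{T}})$ we have $\mathcal{T}y=\tilde{\mathcal{T}}y-\Lambda y\in\mathcal{H}_b(\mathbb{R})$. This gives equality for any $b\in\mathbb{R}$.

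The only delicate point is the regularity convention: $\dot y$ must make sense in both domains, with $y$ locally absolutely continuous. If both domains are understood with the same regularity requirement, the argument is symmetric. Note also that {\bf (A4)} enters only through the identification $A(t)=-\eta(t,-r)$ and the resulting bound $\|A(t)\|\le\|L(t)\|$; the smallness $r<r_0$ is not needed here beyond keeping the constants finite.
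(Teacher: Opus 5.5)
Your argument is correct, but it takes a different and more direct route than the paper. The paper starts from $y\in\mathcal{D}(\tilde{\mathcal{T}})$, writes $\dot y=L(t)y_t+h(t)$ with $h\in\mathcal{H}_b(\mathbb{R})$, and substitutes this into the integrated-by-parts form \eqref{df:pert} of $\Lambda y$. This substitution is how it avoids the circularity in $\dot y$ that you were worried about. It then estimates the resulting integrals over $[t-r,t]$ using \eqref{hyp-int} and the $\mathcal{H}_b$-norm of $h$, and treats the reverse inclusion "similarly" with $\dot y=A(t)y+h$.

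You instead show that $\Lambda$ itself maps $\mathcal{Y}_b(\mathbb{R})$ boundedly into $\mathcal{H}_b(\mathbb{R})$, so $\tilde{\mathcal{T}}y-\mathcal{T}y=\Lambda y$ settles both inclusions at once. This uses no equation satisfied by $y$ and no integration by parts. It also avoids bounding $\sup_\theta|\eta(t,\theta)-\eta(t,-r)|$ by $\|L(t)\|$, a total-variation versus operator-norm comparison that the paper's estimate uses implicitly.

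What the paper's route buys is a bound with an explicit factor $r_0$, because the integrals run over intervals of length $r$. That is the mechanism reused in Lemma \ref{lm:fdmentlm}\textbf{(ii)} to make $\Lambda\circ\mathcal{F}$ small. Your bound $2Me^{|b|(r+r_0)}\|y\|_{\mathcal{Y}_b(\mathbb{R})}$ is $O(1)$ as $r\to 0$, so it would not serve there. It is, however, all this lemma needs, and it makes clear that smallness of the delay plays no role in the domain identity.

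One justification needs repair. The inequality $\|A(t)\|\le\|L(t)\|$ does not follow from $\|L(t)\|\le \mathrm{Var}\,\eta(t,\cdot)$: that is an upper bound on $\|L(t)\|$, so it cannot give a lower bound. The correct reason is to test $L(t)$ on a constant function $\phi\equiv\xi$. By \textbf{(A1)} and \textbf{(A4)}, $L(t)\phi=(\eta(t,0)-\eta(t,-r))\xi=A(t)\xi$, hence $|A(t)\xi|\le\|L(t)\|\,|\xi|$. Alternatively, you can cite \eqref{hyp:A} directly; this gives $\int_t^{t+r_0}(\|A(u)\|+\|L(u)\|)\,du\le 2Mr_0$ and the same constant. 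With that fix the proof is complete.
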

\begin{proof}
Fix any $b\in \mathbb{R}$.
For any $y\in \mathcal{D}(\tilde{\mathcal{T}})$, there exists $h\in \mathcal{H}_{b}(\mathbb{R})$ such that
\begin{eqnarray}\label{eq:y-h}
\dot y(t)=L(t)y_{t}+h(t).
\end{eqnarray}
Together with \eqref{df:T-pert}, it implies that
\[
(\mathcal{T}y)(t)=h(t)-(\Lambda y)(t)=:\tilde{h}(t).
\]
Substituting \eqref{eq:y-h} in \eqref{df:pert} yields
\[
(\Lambda y)(t)
=A(t)\int_{t-r}^{t}\left(L(u)y_{u}+h(u)\right)\,d u+\int_{t-r}^{t}\eta(t,u-t)\left(L(u)y_{u}+h(u)\right)\,d u.
\]
Noting \eqref{hyp-int} and $h\in \mathcal{H}_{b}(\mathbb{R})$, we have
\begin{eqnarray*}
\int_{t-r}^{t}\Big| L(u)y_{u}+h(u) \Big|\,d u
\!\!\! &\leq&\!\!\!
\int_{t-r_0}^{t}Me^{|b|r_0}\|y\|_{\mathcal{Y}_{b}(\mathbb{R})}e^{-b|u|}\, d u + \int_{t-r_0}^{t} |h(u)| \,d u \\
\!\!\! &\leq&\!\!\!
r_0\left(M  e^{2|b|r_0} \|y\|_{\mathcal{Y}_{b}(\mathbb{R})}+ e^{|b|r_0}\|h\|_{\mathcal{H}_{b}(\mathbb{R})}\right)e^{-b|t|}.
\end{eqnarray*}
Accordingly, we have the following estimates
\begin{eqnarray*}
\frac{1}{r_0}e^{b |t|}\int_{t}^{t+r_0}|(\Lambda  y)(\varsigma)|\,d \varsigma
\!\!\!&\leq&\!\!\!
\frac{1}{r_0}e^{b |t|}\int_{t}^{t+r_0}\int_{\varsigma-r}^{\varsigma}\Big|\Big(A(\varsigma)+\eta(\varsigma,u-\varsigma)\Big)\Big( L(u)y_{u}+h(u) \Big)\Big|
\,d u \,d \varsigma \\
\!\!\!&\leq&\!\!\!
M \left(M  e^{2|b|r_0} \|y\|_{\mathcal{Y}_{b}(\mathbb{R})}+ e^{|b|r_0}\|h\|_{\mathcal{H}_{b}(\mathbb{R})}\right)
     \int_{t}^{t+r_0} e^{b(|t|-|\varsigma|)}\, d \varsigma  \\
\!\!\!&\leq&\!\!\!
r_0 M e^{|b|r_0}\left(M  e^{2|b|r_0} \|y\|_{\mathcal{Y}_{b}(\mathbb{R})}+ e^{|b|r_0}\|h\|_{\mathcal{H}_{b}(\mathbb{R})}\right).
\end{eqnarray*}
This implies $\Lambda y \in \mathcal{H}_{b}(\mathbb{R})$, and thereby, $\tilde{h}\in \mathcal{H}_{b}(\mathbb{R})$ and $y\in \mathcal{D}(\mathcal{T})$.
Hence $\mathcal{D}(\tilde{\mathcal{T}})\subset \mathcal{D}(\mathcal{T})$.
Similarly, we can prove that $\mathcal{D}(\mathcal{T})\subset \mathcal{D}(\tilde{\mathcal{T}})$.
This completes the proof.
\end{proof}

Next we provide a detailed description of the domain $\mathcal{D}(\tilde{\mathcal{T}})$ (or $\mathcal{D}(\mathcal{T})$ by Lemma \ref{lm:domain-T-Tr})
in the following two lemmas,
provided that ODE \eqref{eq:ODE} admits an exponential dichotomy.

\begin{lemma}\label{lm:F-adm}
Suppose that ODE \eqref{eq:ODE} admits an exponential dichotomy
$\mathcal{E}(\alpha,K)$ on  $\mathbb{R}$ with the dichotomy projection $P(t)$ for each $t\in\mathbb{R}$.
Fix any $b\in (-\alpha,\alpha)$ and define a map $\mathcal{F}$ on $\mathcal{H}_{b}(\mathbb{R})$  as
\begin{eqnarray}\label{df:F}
(\mathcal{F} h)(t)
:=\int_{-\infty}^{t}\Psi(t,s)P (s)h(s)\,ds-\int^{+\infty}_{t}\Psi(t,s)(Id-P(s))h(s)\,ds,
\ \ \ h\in \mathcal{H}_{b}(\mathbb{R}),
\end{eqnarray}
where $\Psi(t,s)$ denotes the evolution operator of ODE \eqref{eq:ODE}, i.e.,
$\Psi(\cdot,s)\xi$ is the unique solution of  \eqref{eq:ODE} with initial condition $x(s) = \xi$
for $s\in \mathbb{R}$ and $\xi\in\mathbb{R}^n$. Then
$\mathcal{F}$ is a continuous linear operator from $\mathcal{H}_{b}(\mathbb{R})$ to $\mathcal{Y}_{b}(\mathbb{R})$ such that
\[
\|\mathcal{F}h\|_{\mathcal{Y}_{b}(\mathbb{R})}
\leq  \frac{4K r_0 e^{(\alpha+|b|) r_0}}{1-e^{-(\alpha-|b|)r_0}}\|h\|_{\mathcal{H}_{b}(\mathbb{R})}, \qquad  h\in\mathcal{H}_{b}(\mathbb{R}).
\]
\end{lemma}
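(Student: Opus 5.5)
The plan is to mimic the estimates of Proposition \ref{prop-ED-to-Admis}, with the ODE evolution operator $\Psi(t,s)$ in place of $T_0(t,s)P_0(s)$ and $T(t,s)Q_0(s)$. By the dichotomy $\mathcal{E}(\alpha,K)$ of \eqref{eq:ODE}, the two kernels satisfy
\[
\|\Psi(t,s)P(s)\|\leq Ke^{-\alpha(t-s)}\quad (t\geq s), \qquad \|\Psi(t,s)(Id-P(s))\|\leq Ke^{-\alpha(s-t)}\quad (s\geq t).
\]
So the constant $N$ of Lemma \ref{lm:est-prj} is simply replaced by $K$, and no extension to $C_0$ is needed since the phase space is $\mathbb{R}^n$. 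Linearity of $\mathcal{F}$ is clear. I would write $\mathcal{F}h=\mathcal{F}_1h+\mathcal{F}_2h$, with $\mathcal{F}_1$ the stable integral over $(-\infty,t]$ and $\mathcal{F}_2$ the unstable integral over $[t,+\infty)$.

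For $\mathcal{F}_1$, I would split into $t\le 0$ and $t>0$ and chop the integration range into intervals of length $r_0$, exactly as in \eqref{est:stabinteg-1} and the subsequent display. On each interval $[t-(m+1)r_0,t-mr_0]$ the kernel is at most $Ke^{-\alpha m r_0}$, and the integral of $|h|$ is at most $r_0e^{-b|\cdot|}\|h\|_{\mathcal{H}_b(\mathbb{R})}$. The resulting geometric series converge because $\alpha\pm b>0$. For $t>0$, I would split the integral at $0$ and use the integer part $[t/r_0]$ to align the intervals. This gives $\|\mathcal{F}_1h\|_{\mathcal{Y}_b(\mathbb{R})}\le \frac{2Kr_0e^{(\alpha+|b|)r_0}}{1-e^{-(\alpha-|b|)r_0}}\|h\|_{\mathcal{H}_b(\mathbb{R})}$. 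The argument for $\mathcal{F}_2$ is symmetric, following \eqref{est:unstabinteg-1} and \eqref{est:unstabinteg-2}, and gives the same constant. Adding the two bounds produces the factor $4K$. Continuity of $\mathcal{F}h$ in $t$ would come from continuity of $\Psi$ and of the projections $P(t)$, together with dominated convergence using the same summable bounds.

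The main obstacle is the bookkeeping of the weight $e^{b|s|}$ when the sign of $b$ and the sign of $t$ interact. The issue is comparing $e^{-b|s|}$ on an interval with $e^{-b|t|}$ across $s=0$. I would handle this as the paper does, by splitting at $0$ and absorbing the shifts into factors $e^{(\alpha+|b|)r_0}$, which are uniform. Using $1-e^{-(\alpha+b)r_0}\ge 1-e^{-(\alpha-|b|)r_0}$ and $e^{-br_0}\le e^{(\alpha+|b|)r_0}$ then unifies all four case bounds into the single stated constant.
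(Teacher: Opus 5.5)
Your proposal is correct and follows the paper's proof essentially verbatim. The paper also writes $\mathcal{F}=\mathcal{F}_1+\mathcal{F}_2$ and reruns the interval-chopping estimates of Proposition~\ref{prop-ED-to-Admis} with $N$ replaced by $K$, splitting at $0$ and aligning via $[t/r_0]$ for $t>0$ (and $[-t/r_0]$ for $t<0$). It obtains $\frac{2Kr_0e^{(\alpha+|b|)r_0}}{1-e^{-(\alpha-|b|)r_0}}\|h\|_{\mathcal{H}_b(\mathbb{R})}$ for each piece and adds the two bounds to get the factor $4K$, with continuity of $\mathcal{F}h$ likewise attributed to the continuity of $P(t)$.
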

\begin{proof}
Suppose that ODE \eqref{eq:ODE} admits an exponential dichotomy
$\mathcal{E}(\alpha,K)$ on  $\mathbb{R}$ with the dichotomy projection $P(t)$ for each $t\in\mathbb{R}$.
For each $h\in\mathcal{H}_{b}(\mathbb{R})$, let
\begin{eqnarray}\label{df:F-1-2}
\begin{aligned}
(\mathcal{F}_{1}h)(t) &:= \int_{-\infty}^{t}\Psi(t,s)P(s)h(s)\,ds, \\
(\mathcal{F}_{2}h)(t) &:= -\int^{+\infty}_{t}\Psi(t,s)(Id-P(s))h(s)\,ds.
\end{aligned}
\end{eqnarray}
Then
\[
(\mathcal{F}h)(t)=(\mathcal{F}_{1}h)(t)+(\mathcal{F}_{2}h)(t), \ \ \ \ t\in\mathbb{R}.
\]
Since $P(t)$ is continuous in $t$,
we have that  $\mathcal{F}h$, $\mathcal{F}_{1}h$ and $\mathcal{F}_{2}h$ are continuous functions on $\mathbb{R}$ and linear in $h$.

Fix any $h\in \mathcal{H}_{b}(\mathbb{R})$. Since $\alpha+b>0$,
we have that for $t\leq 0$,
\begin{eqnarray}\label{est:F1h-neg0}
\begin{aligned}
|(\mathcal{F}_{1}h)(t)|
&\leq  \int_{-\infty}^{t}Ke^{-\alpha(t-s)}|h(s)|\,ds\\
& \leq K \sum_{m=0}^{\infty}\int^{t-m r_0}_{t-(m+1)r_0} e^{-\alpha m r_0}|h(s)|\,ds\\
& = K r_0  e^{-b r_0} e^{b t}\sum_{m=0}^{\infty}e^{-(\alpha+b) mr_0} \left(\frac{1}{r_0}e^{b|t-(m+1)r_0|}\int^{t-m r_0}_{t-(m+1)r_0} |h(s)|\,ds \right)\\
&\leq \frac{K r_0  e^{-b r_0}}{1-e^{-(\alpha+b) r_0}}\|h\|_{\mathcal{H}_{b}(\mathbb{R})}e^{b t}.
\end{aligned}
\end{eqnarray}
For $t>0$, noting \eqref{est:F1h-neg0} and $t-r_0<[\frac{t}{r_0}] r_0\leq t$, we can compute
\[
\begin{aligned}
|(\mathcal{F}_{1}h)(t)|
&\leq  \int_{-\infty}^{0}Ke^{-\alpha(t-s)}|h(s)|\,ds +\int_{0}^{t}Ke^{-\alpha(t-s)}|h(s)|\,ds\\
&\leq  |\mathcal{F}_{1}h(0)| e^{-\alpha t}
  +K\sum_{m=0}^{[\frac{t}{r_0}]} \int_{([\frac{t}{r_0}]-m)r_0}^{([\frac{t}{r_0}]+1-m)r_0} e^{-\alpha ([\frac{t}{r_0}]r_0-s)}|h(s)|\,ds \\
&\leq  \frac{K r_0  e^{-b r_0}}{1-e^{-(\alpha+b) r_0}}\|h\|_{\mathcal{H}_{b}(\mathbb{R})} e^{-\alpha t}
  +K\sum_{m=0}^{[\frac{t}{r_0}]} e^{-\alpha(m-1)r_0} \int_{([\frac{t}{r_0}]-m)r_0}^{([\frac{t}{r_0}]+1-m)r_0}|h(s)|\,ds.
\end{aligned}
\]
Since $h\in\mathcal{H}_{b}(\mathbb{R})$ and $b-\alpha<0$, we have the following estimates on the second term:
\[
\begin{aligned}
& K\sum_{m=0}^{[\frac{t}{r_0}]} e^{-\alpha(m-1)r_0} \int_{([\frac{t}{r_0}]-m)r_0}^{([\frac{t}{r_0}]+1-m)r_0}|h(s)|\,ds\\
& \leq   K r_0 e^{(\alpha+|b|) r_0} \sum_{m=0}^{[\frac{t}{r_0}]} e^{-(\alpha-b)mr_0-b t} \left( \frac{1}{r_0}e^{b([\frac{t}{r_0}]-m)r_0} \int_{([\frac{t}{r_0}]-m)r_0}^{([\frac{t}{r_0}]+1-m)r_0}|h(s)|\,ds \right)\\
&\leq  \frac{K r_0 e^{(\alpha+|b|) r_0} }{1-e^{-(\alpha-b)r_0}}\|h\|_{\mathcal{H}_{b}(\mathbb{R})}e^{-bt},
\ \ \ \ t>0.
\end{aligned}
\]
Accordingly, we have
\[
|(\mathcal{F}_{1}h)(t)|
\leq \frac{2K r_0 e^{(\alpha+|b|) r_0} }{1-e^{-(\alpha-|b|)r_0}}\|h\|_{\mathcal{H}_{b}(\mathbb{R})}e^{-bt}
\ \ \ \mbox{ for }  t>0.
\]
Together with \eqref{est:F1h-neg0}, it implies that
\begin{eqnarray}\label{est:F1h-R2}
\begin{aligned}
\|\mathcal{F}_{1}h\|_{\mathcal{Y}_{b}(\mathbb{R})}
\leq \frac{2K r_0 e^{(\alpha+|b|) r_0} }{1-e^{-(\alpha-|b|)r_0}}\|h\|_{\mathcal{H}_{b}(\mathbb{R})}.
\end{aligned}
\end{eqnarray}

As for  $\mathcal{F}_{2}h$,  we have that for $t\geq 0$,
\begin{eqnarray}\label{est:F2h-pos2}
\begin{aligned}
|(\mathcal{F}_{2}h)(t)|
& \leq  \int^{\infty}_{t}Ke^{-\alpha(s-t)}|h(s)|\,ds\\
& \leq K \sum_{m=0}^{\infty}\int_{t+m r_0}^{t+(m+1)r_0} e^{-\alpha m r_0}|h(s)|\,ds\\
& = K r_0 e^{-b t}\sum_{m=0}^{\infty} e^{-(\alpha+b) m r_0} \left( \frac{1}{r_0}e^{b(t+m r_0)}\int^{t+(m+1)r_0}_{t+m r_0}|h(s)|\,ds \right)\\
& \leq \frac{K r_0}{1-e^{-(\alpha+b)r_0}}\|h\|_{\mathcal{H}_{b}(\mathbb{R})}e^{-b t},
\end{aligned}
\end{eqnarray}
where we note that $\alpha+b>0$ and $h\in \mathcal{H}_{b}(\mathbb{R})$.
For $t<0$, using \eqref{est:F2h-pos2} yields
\[
\begin{aligned}
|(\mathcal{F}_{2}h)(t)|
&\leq  \int^{\infty}_{0}Ke^{-\alpha(s-t)}|h(s)|\,ds+\int^{0}_{t}Ke^{-\alpha(s-t)}|h(s)|\,ds\\
&\leq \frac{K r_0}{1-e^{-(\alpha+b)r_0}}\|h\|_{\mathcal{H}_{b}(\mathbb{R})}e^{\alpha t}
  +K\sum_{m=0}^{[-\frac{t}{r_0}]}\int_{(m-1-[-\frac{t}{r_0}])r_0}^{(m-[-\frac{t}{r_0}])r_0}e^{-\alpha(s+[-\frac{t}{r_0}]r_0)}|h(s)|\,ds,
\end{aligned}
\]
where we use the fact that $t\leq -[-\frac{t}{r_0}]r_0<t+r_0$ in the last line.
We further have the following
\[
\begin{aligned}
& K\sum_{m=0}^{[-\frac{t}{r_0}]}\int_{(m-1-[-\frac{t}{r_0}])r_0}^{(m-[-\frac{t}{r_0}])r_0}e^{-\alpha(s+[-\frac{t}{r_0}]r_0)}|h(s)|\,ds\\
&\ \ \ \leq K\sum_{m=0}^{[-\frac{t}{r_0}]}e^{-\alpha(m-1)r_0}\int_{(m-1-[-\frac{t}{r_0}])r_0}^{(m-[-\frac{t}{r_0}])r_0}|h(s)|\,ds \\
&\ \ \ = K r_0 e^{(\alpha-b) r_0-b [-\frac{t}{r_0}]r_0}\sum_{m=0}^{[-\frac{t}{r_0}]}e^{-(\alpha-b) m r_0}\left( \frac{1}{r_0}e^{b([-\frac{t}{r_0}]-m+1)r_0}\int_{(m-1-[-\frac{t}{r_0}])r_0}^{(m-[-\frac{t}{r_0}])r_0}|h(s)|\,ds \right) \\
&\ \ \ \leq \frac{K r_0 e^{(\alpha+|b|) r_0}}{1-e^{-(\alpha-b)r_0}}\|h\|_{\mathcal{H}_{b}(\mathbb{R})}e^{b t}, \ \ \ \ t< 0.
\end{aligned}
\]
It follows that
\[
\begin{aligned}
|(\mathcal{F}_{2}h)(t)|
\leq \frac{2K r_0 e^{(\alpha+|b|) r_0}}{1-e^{-(\alpha-|b|)r_0}}\|h\|_{\mathcal{H}_{b}(\mathbb{R})}e^{b t}, \ \ \ \ t< 0.
\end{aligned}
\]
Together with \eqref{est:F2h-pos2}, it implies that
\begin{eqnarray}\label{est:F2h-R2}
\begin{aligned}
\|\mathcal{F}_{2}h\|_{\mathcal{Y}_{b}(\mathbb{R})}
\leq  \frac{2K r_0 e^{(\alpha+|b|) r_0}}{1-e^{-(\alpha-|b|)r_0}}\|h\|_{\mathcal{H}_{b}(\mathbb{R})}.
\end{aligned}
\end{eqnarray}
Therefore, this lemma is proved by \eqref{est:F1h-R2} and \eqref{est:F2h-R2}.
\end{proof}

\begin{lemma}\label{lm:EDAdmis-ODE}
Suppose that ODE \eqref{eq:ODE} admits an exponential dichotomy $\mathcal{E}(\alpha,K)$ on $\mathbb{R}$.
Fix any $b\in (-\alpha,\alpha)$ and let $\mathcal{F}:\mathcal{H}_{b}(\mathbb{R})\to \mathcal{Y}_{b}(\mathbb{R})$ be defined by \eqref{df:F}.
Then there exists a unique $y\in \mathcal{Y}_{b}(\mathbb{R})$ such that $\mathcal{T}y=h$ for each $h\in \mathcal{H}_{b}(\mathbb{R})$.
Furthermore,
$\mathcal{D}(\tilde{\mathcal{T}})=\mathcal{D}(\mathcal{T})=\mathcal{R}(\mathcal{F})$,
where $\mathcal{R}(\mathcal{F})$ is the range of $\mathcal{F}$.
\end{lemma}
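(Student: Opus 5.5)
The plan is to show that $\mathcal{F}h$ solves $\mathcal{T}y=h$, that this solution is unique in $\mathcal{Y}_b(\mathbb{R})$, and then read off the domain identity from Lemma \ref{lm:domain-T-Tr}.

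\emph{Existence.} Fix $h\in\mathcal{H}_b(\mathbb{R})$ and set $y:=\mathcal{F}h$. Lemma \ref{lm:F-adm} already gives $y\in\mathcal{Y}_b(\mathbb{R})$, so it remains to check that $y$ is absolutely continuous with $\dot y=A(t)y+h$ almost everywhere. I would fix $t_0\in\mathbb{R}$ and use the cocycle property $\Psi(t,s)=\Psi(t,t_0)\Psi(t_0,s)$ together with the invariance $\Psi(t,s)P(s)=P(t)\Psi(t,s)$ to write, for $t\geq t_0$,
\[
y(t)=\Psi(t,t_0)y(t_0)+\int_{t_0}^{t}\Psi(t,s)\big(P(s)+(Id-P(s))\big)h(s)\,ds=\Psi(t,t_0)y(t_0)+\int_{t_0}^{t}\Psi(t,s)h(s)\,ds .
\]
The improper integrals converge absolutely by the estimates in the proof of Lemma \ref{lm:F-adm}, which is what justifies splitting and recombining them. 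This is the classical variation of constants formula, and the Carath\'eodory theory (using local integrability of $A$ and $h$) then gives absolute continuity and $\dot y-Ay=h$ a.e. Hence $y\in\mathcal{D}(\mathcal{T})$ and $\mathcal{T}y=h$.

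\emph{Uniqueness.} If $y\in\mathcal{Y}_b(\mathbb{R})$ satisfies $\mathcal{T}y=0$, then $y(t)=\Psi(t,s)y(s)$ for all $t\ge s$. For the stable part, $P(t)y(t)=\Psi(t,s)P(s)y(s)$, so $|P(t)y(t)|\le Ke^{-\alpha(t-s)}e^{-b|s|}\|y\|_{\mathcal{Y}_b(\mathbb{R})}$. For $s\to-\infty$ the right-hand side behaves like $e^{(\alpha+b)s}$, which tends to $0$ since $\alpha+b>0$. For the unstable part I would use invertibility of $\Psi$ on $\mathcal{N}(P)$ and let $s\to+\infty$, exactly as in the uniqueness argument of Proposition \ref{prop-ED-to-Admis}. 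Together these give $y=0$.

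\emph{Domain identity.} For $\mathcal{R}(\mathcal{F})\subset\mathcal{D}(\mathcal{T})$, existence shows that $\mathcal{F}h\in\mathcal{Y}_b(\mathbb{R})$ and $\mathcal{T}\mathcal{F}h=h\in\mathcal{H}_b(\mathbb{R})$. For the reverse inclusion, take $y\in\mathcal{D}(\mathcal{T})$ and put $h:=\mathcal{T}y\in\mathcal{H}_b(\mathbb{R})$. Then $y-\mathcal{F}h\in\mathcal{Y}_b(\mathbb{R})$ solves the homogeneous equation, so uniqueness gives $y=\mathcal{F}h\in\mathcal{R}(\mathcal{F})$. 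Combining with Lemma \ref{lm:domain-T-Tr} yields $\mathcal{D}(\tilde{\mathcal{T}})=\mathcal{D}(\mathcal{T})=\mathcal{R}(\mathcal{F})$.

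The main obstacle I expect is the existence step: justifying that the improper integrals may be manipulated and that the result is absolutely continuous when $A$ is only locally integrable rather than bounded. I would handle it with the block-sum bounds from the proof of Lemma \ref{lm:F-adm} and dominated convergence.
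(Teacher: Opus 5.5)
Your proposal is correct and follows essentially the paper's argument. Uniqueness comes from the dichotomy estimates on the stable and unstable components using $\alpha+b>0$, existence from checking that $\mathcal{F}h$ solves $\mathcal{T}y=h$, and the domain identity from $\mathcal{T}\circ\mathcal{F}=Id$ together with injectivity and Lemma \ref{lm:domain-T-Tr}. Your existence step, which obtains absolute continuity and $\dot y=Ay+h$ almost everywhere via the cocycle identity, is more careful than the paper's claim that $\mathcal{F}h$ is continuously differentiable; that claim need not hold when $A$ and $h$ are only locally integrable.
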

\begin{proof}
Suppose that ODE \eqref{eq:ODE} admits an exponential dichotomy $\mathcal{E}(\alpha,K)$ on $\mathbb{R}$.
Fix any $b\in (-\alpha,\alpha)$.
It suffices to prove that $x=0$ if $x(t)=\Psi(t,s)x(s)$ for $t\geq s$ such that $x\in \mathcal{Y}_{b}(\mathbb{R})$.
We still use $P(t)$ to denote the dichotomy projection of the exponential dichotomy $\mathcal{E}(\alpha,K)$ for ODE \eqref{eq:ODE}.
Fix any $t\in \mathbb{R}$ and let
\[
v(t)=P(t)x(t),\ \ \ \ u(t)=(Id-P(t))x(t).
\]
Then for any $s\geq 0$,
\[
v(t)=P(t)\Psi(t,t-s)x(t-s)=\Psi(t,t-s)P(t-s)v(t-s).
\]
Note that $v\in \mathcal{Y}_{b}(\mathbb{R})$ because  $x\in \mathcal{Y}_{b}(\mathbb{R})$. Then
\[
|v(t)|\leq Ke^{-\alpha s}\|v\|_{\mathcal{Y}_{b}(\mathbb{R})}e^{-b|t-s|}\leq K\|v\|_{\mathcal{Y}_{b}(\mathbb{R})}e^{b t}e^{-(\alpha+b)s}
\]
for all $s\geq \max\{0,t\}$. Using the fact that $\alpha+b>0$ and letting $s\to +\infty$, we get $v(t)=0$.
Similarly, we have $u(t)=0$. Consequently, we get $x=0$.
This proves the uniqueness.

For any $h\in \mathcal{H}_{b}(\mathbb{R})$, we have $\mathcal{F}h\in \mathcal{Y}_{b}(\mathbb{R})$.
By \eqref{df:F}, we can verify that $\mathcal{F}h$ is continuously differentiable and
\[
\mathcal{T}(\mathcal{F}h)(t)= \frac{d}{dt} (\mathcal{F}h)(t)-A(t) (\mathcal{F}h)(t)=h(t),\ \ \
\forall t\in\mathbb{R}.
\]
This implies $\mathcal{F}h\in \mathcal{D}(\mathcal{T})$.
Noting $\mathcal{T}(\mathcal{F}h)=h$ and the first assertion of this lemma,
we get $\mathcal{D}(\mathcal{T})=\mathcal{R}(\mathcal{F})$.
Therefore, the proof is completed by Lemma \ref{lm:domain-T-Tr}.
\end{proof}

For each $y\in \mathcal{D}(\tilde{\mathcal{T}})$,
by Lemma \ref{lm:EDAdmis-ODE},
there exists a unique $h\in \mathcal{H}_{b}(\mathbb{R})$ such that $\mathcal{F}h=y$.
Then   \eqref{df:T-pert} is equivalent to the form
\begin{eqnarray}\label{df:TrF}
(\tilde{\mathcal{T}}\circ \mathcal{F}h)(t)=(\mathcal{T}\circ \mathcal{F} h)(t)+(\Lambda \circ \mathcal{F} h)(t).
\end{eqnarray}
We will use this equivalent form to prove the admissibility property of equation \eqref{DDE} with small delay.
With respect to the three composite operators appearing in this formula, we have the next lemma.

\begin{lemma}\label{lm:fdmentlm}
Suppose that ODE \eqref{eq:ODE} admits an exponential dichotomy
$\mathcal{E}(\alpha, K)$ on $\mathbb{R}$ and equation \eqref{DDE} satisfies {\bf (A1)}\,-\,{\bf (A4)}. Then we have the following assertions:
\begin{enumerate}
\item[{\bf (i)}]
For each $b\in (-\alpha,\alpha)$,
the operator $\mathcal{T}\circ \mathcal{F}$ is the identity from $\mathcal{H}_{b}(\mathbb{R})$
to $\mathcal{H}_{b}(\mathbb{R})$.
\vskip 3pt

\item[{\bf (ii)}]
For each $b\in (-\alpha,\alpha)$ and $r\in (0,r_0]$,
the operator $\Lambda \circ \mathcal{F}$
is a bounded linear operator from $\mathcal{H}_{b}(\mathbb{R})$ to itself and
\[
\|\Lambda \circ \mathcal{F}h\|_{\mathcal{H}_{b}(\mathbb{R})}
\leq r_0 \left\{\frac{4KM^2 r_0 e^{4\alpha r_0}}{1-e^{-(\alpha-|b|)r_0}}+Me^{2\alpha r_0}\right\}\|h\|_{\mathcal{H}_{b}(\mathbb{R})}.
\]
\vskip 3pt

\item[{\bf (iii)}]
For each $r\in (0,r_0]$,
where
$r_0>0$ is a constant such that the function
\begin{eqnarray}\label{df:Kar0}
\ell(\alpha,r_0):=Me^{4\alpha r_0}(4KM+\alpha)
\end{eqnarray}
satisfies
$\ell(\alpha,r_0)r_0 \leq \alpha-|b|$ for some $b\in (-\alpha,\alpha)$,
the operator $\tilde{\mathcal{T}}\circ \mathcal{F}$ is an isomorphism from $\mathcal{H}_{b}(\mathbb{R})$ onto itself.
\end{enumerate}
\end{lemma}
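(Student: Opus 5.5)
Part (i) follows at once from Lemma \ref{lm:EDAdmis-ODE}. There we showed that for $h\in\mathcal{H}_{b}(\mathbb{R})$ the function $\mathcal{F}h$ is absolutely continuous and satisfies $\frac{d}{dt}(\mathcal{F}h)(t)-A(t)(\mathcal{F}h)(t)=h(t)$ almost everywhere. Hence $\mathcal{T}\circ\mathcal{F}h=h$ as elements of $\mathcal{H}_{b}(\mathbb{R})$.

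For (ii), put $y=\mathcal{F}h$ and use the integrated form \eqref{df:pert} together with $\dot y(u)=A(u)y(u)+h(u)$:
\[
(\Lambda y)(t)=\int_{t-r}^{t}\bigl(A(t)+\eta(t,u-t)\bigr)\bigl(A(u)y(u)+h(u)\bigr)\,du .
\]
By {\bf (A1)}, {\bf (A2)} and {\bf (A4)}, $|\eta(t,\theta)|\le \operatorname{Var}\eta(t,\cdot)=\|L(t)\|$, and likewise $\|A(t)\|\le\|L(t)\|$. The intended kernel bound is $\|A(t)\|+|\eta(t,u-t)|\lesssim \|L(t)\|$; I would track whether this gives the constant $M$ or $2M$ and adjust the final constant accordingly.

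Next, $u\in[t-r,t]\subset[t-r_0,t]$. Using $|y(u)|\le \|\mathcal{F}h\|_{\mathcal{Y}_b}e^{-b|u|}$, Lemma \ref{lm:F-adm}, and \eqref{hyp:A}, we get
\[
\int_{t-r}^{t}|A(u)y(u)|\,du\le r_0 M e^{|b|r_0}e^{-b|t|}\,\frac{4Kr_0e^{(\alpha+|b|)r_0}}{1-e^{-(\alpha-|b|)r_0}}\|h\|_{\mathcal{H}_b}.
\]
We also have $\int_{t-r}^{t}|h|\le r_0e^{|b|r_0}e^{-b|t|}\|h\|_{\mathcal{H}_b}$. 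Then multiply by $\|L(\varsigma)\|$, integrate over $\varsigma\in[t,t+r_0]$, and divide by $r_0$, using \eqref{hyp-int} once more. The factor $e^{b(|t|-|\varsigma|)}\le e^{|b|r_0}$, together with $|b|<\alpha$, collects all exponentials into $e^{4\alpha r_0}$ and $e^{2\alpha r_0}$. This gives the stated bound with the overall factor $r_0$. Linearity is clear.

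For (iii), by (i) and \eqref{df:TrF} we have $\tilde{\mathcal{T}}\circ\mathcal{F}=Id+\Lambda\circ\mathcal{F}$. It therefore suffices to show $\|\Lambda\circ\mathcal{F}\|<1$ and apply the Neumann series. From (ii),
\[
\|\Lambda\circ\mathcal{F}\|\le \frac{r_0Me^{4\alpha r_0}}{1-e^{-(\alpha-|b|)r_0}}\bigl(4KMr_0+(1-e^{-(\alpha-|b|)r_0})\bigr).
\]
Use $1-e^{-x}\le x$ in the second summand, and $1-e^{-x}\ge x e^{-x}$ in the denominator, or whatever elementary inequality matches the authors' constant. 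Then the right side becomes at most $\ell(\alpha,r_0)r_0/(\alpha-|b|)$ times a harmless factor. The hypothesis $\ell(\alpha,r_0)r_0\le\alpha-|b|$ is what should force the norm below $1$.

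The main obstacle is this calibration. The case of equality in the hypothesis, and the denominator $1-e^{-(\alpha-|b|)r_0}$ versus $(\alpha-|b|)r_0$, must be handled so that a strict inequality $<1$ results. If the direct bound only gives $\le1$, I would sharpen (ii) slightly, for instance by using the finer bound $e^{(\alpha+|b|)r_0}$ in place of $e^{2\alpha r_0}$, to gain strictness. Invertibility of $Id+\Lambda\circ\mathcal{F}$ on $\mathcal{H}_b(\mathbb{R})$ then yields that $\tilde{\mathcal{T}}\circ\mathcal{F}$ is an isomorphism.
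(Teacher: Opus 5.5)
Your route is the paper's route: the same representation $(\Lambda\circ\mathcal{F}h)(t)=\int_{t-r}^{t}\bigl(A(t)+\eta(t,u-t)\bigr)\bigl(A(u)\mathcal{F}h(u)+h(u)\bigr)\,du$, the same weighted estimates, and the same Neumann series for $Id+\Lambda\circ\mathcal{F}$. Part (i) is fine; ``absolutely continuous, a.e.'' is more accurate than the paper's ``continuously differentiable''. However, the two points you leave open are exactly where the constants are decided, and what you wrote closes neither.

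\textbf{The kernel bound in (ii).} Bounding $\|A(t)\|$ and $|\eta(t,u-t)|$ separately gives in general only $2\|L(t)\|$, and the sum really can exceed $\|L(t)\|$. For the scalar kernel with $L\phi=\phi(-r)+\phi(-r/2)$ one has $\|L\|=2$, $A=2$ and $\eta(\sigma)=-1$ on $(-r,-r/2]$, so $|A|+|\eta(\sigma)|=3$. With $2M$ in place of $M$, (ii) is not the stated estimate and (iii) is lost. The missing observation is that, by (A1) and (A4), $A(t)+\eta(t,\sigma)=\eta(t,\sigma)-\eta(t,-r)$. This is the mass that $d_\theta\eta(t,\cdot)$ assigns to $[-r,\sigma)$. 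Testing $L(t)$ on continuous $\phi_k\to\xi\mathbf{1}_{[-r,\sigma)}$ with $\|\phi_k\|\le|\xi|$ gives $\|A(t)+\eta(t,\sigma)\|\le\|L(t)\|$, and this is what yields $M$. You do not need, and for matrix kernels cannot assume, $\mathrm{Var}\,\eta(t,\cdot)=\|L(t)\|$.

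\textbf{The calibration in (iii).} Set $x:=(\alpha-|b|)r_0$. Your direct route gives $\|\Lambda\circ\mathcal{F}\|\le\frac{\ell(\alpha,r_0)r_0}{\alpha-|b|}\cdot\frac{x}{1-e^{-x}}$. Since $\frac{x}{1-e^{-x}}>1$ for every $x>0$, this factor is not harmless when the hypothesis is tight. You also cannot ``match the authors' constant'': the paper's proof asserts $\frac{x}{1-e^{-x}}\le 1$, which is reversed.

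In fact the stated constant in (ii) does not yield $<1$. Take $\dot x=-x(t-r)$ with $\alpha=M=1$, $b=0$, $r_0=0.01$, and $K\approx 23.8$ chosen so that $\ell(\alpha,r_0)r_0=1$ (any $K\ge 1$ is a valid bound for $\dot x=-x$). Then the right-hand side of (ii) is about $1.005\,\|h\|$. Replacing $e^{2\alpha r_0}$ by $e^{(\alpha+|b|)r_0}$, as you suggest, still leaves about $1.005$.

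What works is to keep the exponents that the chain in (ii) actually produces:
\[
\|\Lambda\circ\mathcal{F}\|\le \frac{4KM^2r_0^2e^{(\alpha+3|b|)r_0}}{1-e^{-x}}+Mr_0e^{2|b|r_0}.
\]
Insert the hypothesis in the form $4KM^2r_0\le(\alpha-|b|)e^{-4\alpha r_0}-M\alpha r_0$. Because $(\alpha+3|b|-4\alpha)r_0=-3x$, the first term is at most $\frac{xe^{-3x}}{1-e^{-x}}-\frac{M\alpha r_0^2e^{(\alpha+3|b|)r_0}}{1-e^{-x}}$. Now $\frac{xe^{-3x}}{1-e^{-x}}=\frac{xe^{-2x}}{e^{x}-1}\le e^{-2x}$. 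Also, $1-e^{-x}\le x$ gives $\frac{M\alpha r_0^2e^{(\alpha+3|b|)r_0}}{1-e^{-x}}\ge\frac{\alpha}{\alpha-|b|}Mr_0e^{(\alpha+3|b|)r_0}\ge Mr_0e^{2|b|r_0}$. Hence $\|\Lambda\circ\mathcal{F}\|\le e^{-2x}<1$, including the equality case of the hypothesis, and the Neumann series then gives (iii).
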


\begin{proof}
For each $h\in \mathcal{H}_{b}(\mathbb{R})$,
$\mathcal{F}h$ is continuously differentiable by \eqref{df:F}.
Furthermore, a direct computation yields
\begin{eqnarray}\label{eq:Fh-dvt}
\mathcal{T}\circ \mathcal{F}h(t)=
\frac{d}{dt} \mathcal{F}h(t)-A(t)\mathcal{F}h(t)=h(t),
\end{eqnarray}
which implies assertion {\bf (i)}.
By \eqref{df:pert} and \eqref{eq:Fh-dvt},
\[
(\Lambda \circ \mathcal{F} h)(t)
=A(t)\int_{t-r}^{t}\left(A(\theta)\mathcal{F}h(\theta)+h(\theta)\right)\,d\theta
 +\int_{t-r}^{t}\eta(t,\theta-t)\left(A(\theta)\mathcal{F}h(\theta)+h(\theta)\right)\,d\theta.
\]
To estimate $\|\Lambda \circ \mathcal{F}h\|_{\mathcal{H}_{b}(\mathbb{R})}$,
by Lemma \ref{lm:F-adm}, \eqref{hyp:A} and the fact $h\in \mathcal{H}_{b}(\mathbb{R})$, we get
\begin{eqnarray*}
\int_{t-r}^{t}\Big| A(\theta)\mathcal{F}h(\theta)+h(\theta)\Big|\,d\theta
\!\!\! &\leq&\!\!\!
\int_{t-r_0}^{t} \|A(\theta)\|\|\mathcal{F}h\|_{\mathcal{Y}_{b}(\mathbb{R})}e^{-b|\theta|}\, d\theta
     + \int_{t-r_0}^{t} |h(\theta)| \,d\theta \\
\!\!\! &\leq&\!\!\!
r_0 e^{|b|r_0}\left(M \|\mathcal{F}h\|_{\mathcal{Y}_{b}(\mathbb{R})}+\|h\|_{\mathcal{H}_{b}(\mathbb{R})}\right)e^{-b|t|}.
\end{eqnarray*}
This together with \eqref{hyp-int} and $\eta(t,\theta) =-A(t)$ for  $\theta\leq -r$ yields that
\begin{eqnarray*}
&& \frac{1}{r_0}e^{b |t|}\int_{t}^{t+r_0}|(\Lambda \circ \mathcal{F} h)(u)|\,d u \\
&& \ \
\leq \frac{1}{r_0}e^{b |t|}\int_{t}^{t+r_0}\int_{u-r}^{u}\Big|\Big(A(u)+\eta(u,\theta-u)\Big)\Big( A(\theta)\mathcal{F}h(\theta)+h(\theta)\Big)\Big| \,d\theta \,d u \\
&& \ \
\leq e^{|b|r_0}\left(M \|\mathcal{F}h\|_{\mathcal{Y}_{b}(\mathbb{R})}+\|h\|_{\mathcal{H}_{b}(\mathbb{R})}\right)
     \int_{t}^{t+r_0}\|L(u)\| e^{b(|t|-|u|)}\, du  \\
&& \ \
\leq r_0 e^{2|b|r_0}M\left(M \|\mathcal{F}h\|_{\mathcal{Y}_{b}(\mathbb{R})}+\|h\|_{\mathcal{H}_{b}(\mathbb{R})}\right).
\end{eqnarray*}
Hence assertion {\bf (ii)} of this proposition is proved by Lemma \ref{lm:F-adm}.

Suppose that $r_0>0$ and $b\in (-\alpha,\alpha)$ satisfy $\ell(\alpha,r_0)r_0 \leq \alpha-|b|$.
A direct computation yields
\[
\frac{(\alpha-|b|)r_0}{1-e^{-(\alpha-|b|)r_0}} \leq 1.
\]
Subsequently, we have the following
\begin{eqnarray*}
\begin{aligned}
\frac{4KM^2 r_0 e^{4\alpha r_0}}{1-e^{-(\alpha-|b|)r_0}}+Me^{2\alpha r_0}
&= \frac{4KM^2 e^{4\alpha r_0}}{\alpha-|b|}\cdot \frac{(\alpha-|b|)r_0}{1-e^{-(\alpha-|b|)r_0}} +Me^{2\alpha r_0} \\
&\leq \frac{4KM^2 e^{4\alpha r_0}}{\alpha-|b|}+Me^{2\alpha r_0} \\
& < \frac{\ell(\alpha,r_0)}{\alpha-|b|}.
\end{aligned}
\end{eqnarray*}
Together with the assumption in {\bf (iii)}, we get
\[
r_0 \left\{\frac{4KM^2 r_0 e^{4\alpha r_0}}{1-e^{-(\alpha-|b|)r_0}}+Me^{2\alpha r_0}\right\}
<\frac{\ell(\alpha,r_0)r_0}{\alpha-|b|}\leq 1.
\]
By assertion {\bf (ii)} of this proposition,
\[
\|\Lambda \circ \mathcal{F}h\|_{\mathcal{H}_{b}(\mathbb{R})} <\frac{\ell(\alpha,r_0)r_0}{\alpha-|b|} \|h\|_{\mathcal{H}_{b}(\mathbb{R})}
\leq \|h\|_{\mathcal{H}_{b}(\mathbb{R})},
\ \ \ \forall\, r\in (0,r_0],\ \forall\, h\in \mathcal{H}_{b}(\mathbb{R}).
\]
This together with assertion {\bf (i)} yields that
\[
\mathcal{T}\circ \mathcal{F}+\Lambda \circ \mathcal{F}=Id+\Lambda \circ \mathcal{F},
\]
which is an isomorphism from $\mathcal{H}_{b}(\mathbb{R})$ onto itself.
This completes the proof.
\end{proof}

Finally, we emphasize that for each $r\in (0,r_0]$
the operator $\tilde{\mathcal{T}}\circ \mathcal{F}:\; \mathcal{H}_{b}(\mathbb{R})\to \mathcal{H}_{b}(\mathbb{R})$
can be regarded as $\mathcal{T}\circ \mathcal{F}$
under a small perturbation $\Lambda \circ \mathcal{F}$
if $r_0$ is sufficiently small.
This property will enable us to establish the proper admissibility of equation \eqref{DDE} with small delay.

\subsection{Proofs of Theorem \ref{thm-ED} and Theorem \ref{thm-ED2}}\

In the following,
we prove the main results on the robustness of exponential dichotomies against small-delay perturbations.
Here we assume that delay equation \eqref{DDE} satisfies  {\bf (A1)}\,-\,{\bf (A4)}.
Rewrite delay equation \eqref{DDE} as a small-delay perturbation of  ODE \eqref{eq:ODE}, i.e.,
\begin{eqnarray*}
\dot x =A(t)x(t)-\Big(A(t)x(t)-\int^{0}_{-r} d_\theta\eta(t,\theta)x(t+\theta) \Big)=A(t)x(t)-(\Lambda x)(t),
\end{eqnarray*}
where $A(t)$ is the coefficient matrix of \eqref{eq:ODE} and $\Lambda $ is defined below \eqref{df:T-pert}.
We will show that the small-delay perturbation $\Lambda x$ preserves exponential dichotomies,
and moreover, the dichotomy exponents undergo only slight changes under this perturbation.
The argument relies on our new criterion for exponential dichotomies of nonautonomous delay equations,
which is formulated through the proper admissibility of two pairs $(\mathcal{H}_{b^\pm}(\mathbb{R}),\mathcal{Y}_{b^\pm}(\mathbb{R}))$ with $b^-<0<b^+$,
and the operator perturbation analysis in the preceding discussion.

We are now ready to prove Theorem \ref{thm-ED}.

\begin{proof}[\itshape\bfseries Proof of Theorem \ref{thm-ED}]
By continuity, there exists $r_0>0$ such that $\ell(\alpha,r_0) r_0<\alpha$. Then  the interval $(0, \alpha-\ell(\alpha,r_0) r_0]$ is not empty.
Fix any $\beta \in (0, \alpha-\ell(\alpha,r_0) r_0]$. We can verify the following
\begin{eqnarray}\label{df:beta}
\beta>0, \ \ \
\pm \beta \in (-\alpha,\alpha), \ \ \
 \ell(\alpha,r_0) r_0 \leq \alpha-\beta.
\end{eqnarray}
We now prove the proper admissibility of the pair $(\mathcal{H}_{\beta}(\mathbb{R}),\mathcal{Y}_{\beta}(\mathbb{R}))$ with respect to \eqref{DDEnonhomo}.

Fix any $h \in \mathcal{H}_{\beta}(\mathbb{R})$.
By \eqref{df:beta} and {\bf (iii)} of Lemma \ref{lm:fdmentlm},
the linear operator $\tilde{\mathcal{T}}\circ \mathcal{F}$ defined in \eqref{df:TrF}  is an isomorphism
from $\mathcal{H}_{\beta}(\mathbb{R})$ onto itself for each $r\in (0,r_0]$.
Then there exists  $\tilde{h} \in \mathcal{H}_{\beta}(\mathbb{R})$ such that
\begin{eqnarray}\label{df:htild}
\tilde{\mathcal{T}}\circ \mathcal{F}\tilde{h}=h.
\end{eqnarray}
Let $y:=\mathcal{F}\tilde{h}$.
Then $y \in \mathcal{Y}_{\beta}(\mathbb{R})$ because of Lemma \ref{lm:F-adm}.
By \eqref{df:htild}, this function $y$ satisfies
\[
\dot y(t)-L(t)y_{t}=(\tilde{\mathcal{T}} y)(t)=h(t),
\]
implying that $(\mathcal{H}_{\beta}(\mathbb{R}),\mathcal{Y}_{\beta}(\mathbb{R}))$ is admissible.

We next prove that this $y$ is the unique solution of \eqref{DDEnonhomo} (or $\tilde{\mathcal{T}} y=h$).
It suffices to prove that if $x\in \mathcal{Y}_{\beta}(\mathbb{R})$ satisfies $\tilde{\mathcal{T}} x=0$ then $x=0$.
Since $x \in \mathcal{D}(\tilde{\mathcal{T}})$,
Lemma \ref{lm:EDAdmis-ODE} yields that there exists $\hat{h}\in \mathcal{H}_{\beta}(\mathbb{R})$ such that $x=\mathcal{F}\hat{h}$.
Then
$
\tilde{\mathcal{T}} (\mathcal{F}\hat{h})=\tilde{\mathcal{T}} x=0.
$
This together with {\bf (iii)} of Lemma \ref{lm:fdmentlm} yields $\hat{h}=0$.
Then $x=\mathcal{F}0=0$.
This proves the proper admissibility of $(\mathcal{H}_{\beta}(\mathbb{R}),\mathcal{Y}_{\beta}(\mathbb{R}))$.

An analogous argument also proves that $(\mathcal{H}_{-\beta}(\mathbb{R}),\mathcal{Y}_{-\beta}(\mathbb{R}))$ is properly admissible.
Therefore, the proof is completed by Proposition \ref{prop-adm-to-NED}.
\end{proof}

Finally, we prove Theorem \ref{thm-ED2} in the following.

\begin{proof}[\itshape\bfseries Proof of Theorem \ref{thm-ED2}]
Since $\zeta(\alpha)=Me^{4\alpha }(4KM+\alpha)$ and $\varepsilon_0=\min\{1,\, \alpha /\zeta(\alpha)\}$,
we have that
\[
\alpha-\zeta(\alpha) r> \alpha - \zeta(\alpha) \varepsilon_0 \geq \alpha - \zeta(\alpha) \cdot \frac{\alpha}{\zeta(\alpha)}=0
\]
for any $r\in (0,\varepsilon_0)$. This implies that the interval $(0, \alpha-\zeta(\alpha) r]$ is not empty.
Fix any $r\in (0,\varepsilon_0)$ and $\beta \in (0, \alpha-\zeta(\alpha) r]$.
Then $\beta>0$ and $\pm \beta \in (-\alpha,\alpha)$.
As in
the proof of Theorem \ref{thm-ED},
it suffices to prove that $\tilde{\mathcal{T}}\circ \mathcal{F}$ is an isomorphism from $\mathcal{H}_{\pm\beta}(\mathbb{R})$ onto
$\mathcal{H}_{\pm\beta}(\mathbb{R})$.
Noting that $\ell(\alpha,r_0)$, defined in \eqref{df:Kar0}, satisfies
\[
\ell(\alpha,\varepsilon_0)
=Me^{4\alpha \varepsilon_0}(4KM+\alpha)
\leq Me^{4\alpha }(4KM+\alpha)=\zeta(\alpha),
\]
we have
\begin{eqnarray}\label{est:sd-1}
\ell(\alpha,r) r<\ell(\alpha,\varepsilon_0) r \leq \zeta(\alpha) r
\leq \alpha-\beta.
\end{eqnarray}
Suppose that $\sup_{t\in\mathbb{R}}\|L(t)\|\leq M$. Then
\[
\sup_{t\in\mathbb{R}}\frac{1}{r} \int_{t}^{t+r}\|A(u)\| du
\leq
\sup_{t\in\mathbb{R}}\frac{1}{r} \int_{t}^{t+r}\|L(u)\| du
  \leq M<+\infty,
\]
implying that \eqref{hyp-int} holds for $r_0=r$.
Replacing $r_0$ in {\bf (iii)} of Lemma \ref{lm:fdmentlm} by $r$ and then using \eqref{est:sd-1},
we get that $\tilde{\mathcal{T}}\circ \mathcal{F}$  is an isomorphism from $\mathcal{H}_{\pm\beta}(\mathbb{R})$ onto $\mathcal{H}_{\pm\beta}(\mathbb{R})$.
This completes the proof.
\end{proof}

Remark that the robustness of exponential dichotomies against small-delay perturbations
for nonautonomous delay equations is well established in our paper.
This actually shows that small-delay perturbations cannot destroy the hyperbolic structure.
The proof relies on our novel admissible characterization.
Unlike previous existence results (e.g., Barreira-Valls \cite{BV-20} and Elorreaga-G\'{o}mez \cite{Elorreaga-Gomez-25}),
this characterization provides explicit estimates for the dichotomy exponents which is crucial for analyzing the persistence of the spectral gap under small-delay perturbations.
We believe that our admissible characterization is also applicable to the robustness problems associated with other types of perturbations.
The flexibility of our framework further suggests potential extensions to nonuniform or tempered exponential dichotomies,
provided that suitable weighted admissible spaces are introduced.






\bibliographystyle{amsplain}

\end{document}